\newtheorem{thm}{Theorem}[section]
\newtheorem{prop}[thm]{Proposition}
\newtheorem{lemma}[thm]{Lemma}
\newtheorem{preremark}[thm]{Remark}
\newenvironment{remark}{\begin{preremark}\rm}{\medskip \end{preremark}}
\numberwithin{equation}{section}
\newcommand{\R}{\mathbb R}
\newcommand{\eps}{\varepsilon}
\newcommand{\dd} {\; \mathrm{d}}
\DeclareMathOperator{\dv}{div}
\DeclareMathOperator{\trace}{trace}
\DeclareMathOperator{\rank}{rank}
\DeclareMathOperator{\sphere}{sphere}
\newcommand{\n}{{n}}
\title{The Landau equation does not blow up}
\date{\today}
\author{Nestor Guillen and Luis Silvestre}
\thanks{Luis Silvestre is supported by NSF grants DMS-2054888 and DMS-2350263. Nestor Guillen is supported by NSF grant DMS-2144232.}
\begin{document}

\begin{abstract}
We consider solutions to the space-homogeneous Landau equation with a general family of interaction potentials. We prove that their Fisher information is monotone decreasing in time. The class of interaction potentials covered by our result includes the case of the Landau equation with Coulomb interactions. As a consequence of the global boundedness of the Fisher information, we deduce that solutions to the space-homogeneous Landau equation never blow up.
\end{abstract}

\maketitle

\section{Introduction}

The Landau equation was derived in 1936 by Lev Landau \cite{landau1936} to model the evolution of densities of particles performing Coulomb collisions in a plasma. It can be derived as a limit case of the Boltzmann equation when the grazing collisions dominate the evolution. It is one of the central equations in kinetic theory. In the space-homogeneous case, it is given by a very simple formula
\[ f_t = \bar a_{ij} \partial_{ij} f + f^2, \qquad \text{ where } \bar a_{ij} = -\partial_{ij} (-\Delta)^{-2} f.\]
A more general family of equations is usually studied. It has the form
\begin{equation} \label{e:landauequation}
 f_t = q(f),
\end{equation}
where the operator $q(f)$ is given by the formula
\begin{equation} \label{e:landauoperator}
q(f)(v) = \partial_{v_i} \int_{\R^3} \alpha(|v-w|) a_{ij}(v-w) \left( \partial_{v_j} - \partial_{w_j} \right) [f(v) f(w)] \dd w. 
\end{equation}

Here $\alpha : (0,+\infty) \to [0,+\infty)$ is an arbitrary nonnegative function and $a_{ij}(z) = |z|^2 \delta_{ij} - z_i z_j$. We will refer to $\alpha$ as the interaction potential. It is common to study the case $\alpha(r) = r^\gamma$ with $\gamma \in [-3,1]$. The most important case is $\alpha(r) = r^{-3}$ that corresponds to the original Landau equation written above, for charged particles interacting with Coulomb potentials.

For a positive function $f : \R^3 \to (0,\infty)$, its Fisher information is defined by the following expression
\begin{equation} \label{e:fisherinfo}
i(f) := \int_{\R^3} \frac{|\nabla f|^2}f \dd v.
\end{equation}
The formula \eqref{e:fisherinfo} is extended to the case that $f$ has vacuum regions by setting $|\nabla f|^2/f = 0$ at those points.

Our main result is the following.
\begin{thm} \label{t:main}
Let $f : [0,T] \times \R^3 \to [0,\infty)$ be a classical solution to the space-homogeneous Landau equation \eqref{e:landauequation}. Assume that the interaction potential $\alpha$ satisfies, for all $r>0$,
\[ \frac{r |\alpha'(r)|}{\alpha(r)} \leq \sqrt{19},\]
then the Fisher information $i(f)$ is monotone decreasing as a function of time.
\end{thm}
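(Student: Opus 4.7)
The strategy is to compute $\frac{d}{dt} i(f(t))$ directly and show it is nonpositive. Starting from $i(f) = \int |\nabla f|^2/f \dd v$, differentiating in $t$ using $f_t = q(f)$ and integrating by parts once in $v$ yields
\[
\frac{d}{dt} i(f) = -\int_{\R^3} \bigl( 2 \Delta \log f + |\nabla \log f|^2 \bigr) \, q(f) \dd v.
\]
Since $q(f) = \partial_{v_i}(\cdots)$ with the inner integrand a convolution against $\alpha(|v-w|) a_{ij}(v-w)$, a further integration by parts transfers $\partial_{v_i}$ onto the bracketed expression, producing a double integral over $\R^3_v \times \R^3_w$ whose integrand involves $\alpha(|v-w|)$, $a_{ij}(v-w)$, and up to three derivatives of $h := \log f$ at $v$, together with first derivatives of $h$ at $w$ through the factor $(\partial_{v_j} - \partial_{w_j})[f(v) f(w)]$.

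The next step is to exploit the $v \leftrightarrow w$ symmetry of $\alpha(|v-w|) a_{ij}(v-w)$ to symmetrize the integrand, which should eliminate the third derivatives of $h$ at the cost of introducing symmetric combinations of $D^2 h(v)$ and $D^2 h(w)$. After this symmetrization the dissipation should take the form
\[
-\frac{d}{dt} i(f) = \int_{\R^3 \times \R^3} \alpha(|v-w|) \, f(v) f(w) \, \mathcal{Q}(v,w) \dd v \dd w,
\]
for an explicit quadratic form $\mathcal{Q}(v,w)$ in $\nabla h(v), \nabla h(w), D^2 h(v), D^2 h(w)$, whose coefficients depend on $v-w$ through the projection $a_{ij}(v-w)/|v-w|^2$ and through the dimensionless quantity $r\alpha'(r)/\alpha(r)$ evaluated at $r = |v-w|$.

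The final and hardest step is to prove $\mathcal{Q}(v,w) \geq 0$ pointwise. The factor $a_{ij}(v-w)$ projects all of the ambient $\R^3$ data onto the two-dimensional hyperplane orthogonal to $v-w$, so the positivity question reduces to a finite-dimensional inequality: a quadratic form on an explicit small vector space built from two tangential gradients and two tangential Hessians. The piece of $\mathcal{Q}$ coming from $\alpha$ alone is the natural Bakry--\'Emery-type square that makes the Maxwell molecules case immediate, while the piece proportional to $r\alpha'(r)/\alpha(r)$ is indefinite and must be absorbed into the first by completing squares. The constant $\sqrt{19}$ should emerge as the largest value of $|r\alpha'/\alpha|$ for which the resulting discriminant remains nonnegative, covering in particular $\alpha(r) = r^\gamma$ with $\gamma \in [-3,1]$ (since $|\gamma| \leq 3 < \sqrt{19}$) and hence the Coulomb case.

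I expect the main obstacle to be this last pointwise positivity check. The algebra is rigid and the specific constant $\sqrt{19}$ suggests the sharp completion of squares is delicate; in particular, one must track many tensorial invariants (scalar contractions of $D^2 h$ with the projection onto $(v-w)^\perp$ and with $\nabla h \otimes \nabla h$ at both $v$ and $w$) and verify that, miraculously, the $\alpha'$ cross terms enter only through the single scalar $r\alpha'(r)/\alpha(r)$, so that the simple bound in the theorem is both natural and sufficient. Once the pointwise inequality is established, monotonicity of $i(f)$ follows by integration in time.
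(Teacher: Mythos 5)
Your opening computation is fine, and the idea of symmetrizing in $v\leftrightarrow w$ to remove third derivatives of $\log f$ is consistent with what actually happens (the paper does this, in effect, by lifting to $\R^6$: writing $q(f)=\pi Q(f\otimes f)$ and using $i(\pi F)\le\frac12 I(F)$, the problem becomes $\langle I'(F),Q(F)\rangle\le 0$ for symmetric $F$, which is the same dissipation you are trying to organize). The genuine gap is your final step: the plan requires the symmetrized integrand $\mathcal Q(v,w)$ to be nonnegative \emph{pointwise}, as a quadratic form in the free jet variables $\nabla\log f$, $D^2\log f$ at $v$ and $w$. That cannot work. When $\alpha'\neq 0$, the bad term that survives the completion of squares is quadratic in \emph{first} derivatives of $\log f$ (in the paper it is $\sum_k\iint \tfrac{(\alpha')^2}{2\alpha}\,(\tilde b_k\cdot\nabla\log F)^2\,F$), while all favorable terms are quadratic in \emph{second} derivatives. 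A pointwise inequality dominating a first-order square by second-order squares is impossible: take jets with vanishing Hessians and nonvanishing gradients and $\mathcal Q<0$. The domination only holds after integrating over the spheres $\{v+w=\mathrm{const},\ |v-w|=\mathrm{const}\}$, via a Poincar\'e-type inequality for $\log f$ on $S^2$ (Lemma \ref{l:logpoincare}); it is a spectral-gap fact, not an algebraic one.

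Relatedly, your guess that $\sqrt{19}$ emerges as the threshold of a discriminant in a pointwise completion of squares misidentifies its source: the constant $19/4$ comes from the spectral gap of the Laplace--Beltrami operator on $S^2$ restricted to antipodally symmetric functions (third eigenvalue $6$), and that antipodal symmetry is exactly the $v\leftrightarrow w$ symmetry of $f\otimes f$ — in your outline the symmetrization is only used to kill third derivatives, not to enlarge the spectral gap. This is not cosmetic: the optimal local (Bakry--\'Emery $\Gamma_2$) constant on $S^2$ is $2$, and with that constant the scheme only reaches $|r\alpha'/\alpha|\le 2\sqrt 2<3$, which misses the Coulomb case. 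So even if some pointwise inequality were available, it would be capped below what Theorem \ref{t:main} requires; the global, symmetry-enhanced Poincar\'e inequality on the sphere is the indispensable ingredient your proposal is missing. (Your expectations that the $\alpha'$ contribution enters only through $r\alpha'(r)/\alpha(r)$ and that everything reduces to directions orthogonal to $v-w$ are correct, and match the paper's $D_{spherical}$ versus $R_{spherical}$ bookkeeping; it is only the mechanism of the final estimate that fails.)
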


We present Theorem \ref{t:main} as an a-priori estimate for classical solutions. By that, we mean smooth functions $f$ that decay sufficiently fast as $|v| \to \infty$. Our second main theorem tells us that the equation has global smooth solutions for every reasonably nice initial data.

The assumption of Theorem \ref{t:main} holds for any power-law interaction $\alpha(r) = r^\gamma$ with $\gamma$ in the usual range $\gamma \in [-3,1]$. It even goes beyond that range if the Landau collision operator is understood appropriately taking into account the cancellations in the integrand of \eqref{e:landauoperator} for $|v-w|$ small. Our parameter $\sqrt{19}$ is not optimal. After we posted online a first version of the this article, Sehyun Ji improved the computation of the threshold of applicability of Theorem \ref{t:main} to $r |\alpha'(r)| / \alpha(r) \leq \sqrt{22}$ in \cite{sehyun2024}. There is no reason to expect this condition to be optimal either. We are currently not aware of any example, for any interaction potential, of a solution for which the Fisher information is not monotone decreasing.

As a consequence of the monotonicity of the Fisher information, we deduce the global existence of smooth solutions in the very-soft-potential range.
\begin{thm} \label{t:main2}
Assume $\alpha(r) = r^\gamma$, for $\gamma \in [-3,1]$. Let $f_0 : \R^3 \to [0,\infty)$ be an initial data that is bounded by a Maxwellian in the sense that
\[ f_0(v) \leq C_0 \exp(-\beta |v|^2), \]
for some positive parameters $C_0$ and $\beta$.

Then there is a unique global classical solution $f:[0,\infty)\times\mathbb{R}^3 \to [0,\infty)$ to the Landau equation \eqref{e:landauequation}, with initial data $f(0,v) = f_0(v)$. For any positive time, this function $f$ is strictly positive, in the Schwartz space, and bounded above by a Maxwellian. The Fisher information $i(f)$ is non-increasing. 
\end{thm}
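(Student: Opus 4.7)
The plan is to combine existing local theory for the Landau equation with the new a-priori estimate of Theorem \ref{t:main}: the monotonicity of the Fisher information will supply a uniform $L^3_v$ bound on the solution, which in turn rules out finite-time blow-up via a known continuation criterion. First I would invoke a local existence result suited to Maxwellian-bounded initial data: there exists a maximal time $T^{*}\in(0,\infty]$ and a unique classical solution $f:[0,T^{*})\times\R^3\to[0,\infty)$ that is smooth and Schwartz in $v$ for every $t\in(0,T^{*})$, together with a continuation criterion forcing some strong norm to blow up as $t\to T^{*}$ if $T^{*}<\infty$. Next, by a standard barrier argument, I would propagate the Maxwellian upper bound $f(t,v)\leq C(t)\,e^{-\beta(t)|v|^2}$ on $[0,T^{*})$ with $\beta(t)>0$.

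The heart of the argument then runs as follows. Fix any $t_0\in(0,T^{*})$; by the smoothing just invoked, $f(t_0,\cdot)$ is Schwartz, so $i(f(t_0))<\infty$. For $\alpha(r)=r^\gamma$ with $\gamma\in[-3,1]$, a direct computation gives $r|\alpha'(r)|/\alpha(r)=|\gamma|\leq 3<\sqrt{19}$, so Theorem \ref{t:main} applies on $[t_0,T^{*})$ and yields $i(f(t))\leq i(f(t_0))$ for every $t\in[t_0,T^{*})$. Writing $g=\sqrt{f}$ and using the identity $\|\nabla g\|_{L^2}^2 = \tfrac14 i(f)$ together with the Sobolev embedding $\dot H^1(\R^3)\hookrightarrow L^6(\R^3)$,
\[
\|f(t)\|_{L^3(\R^3)} = \|g(t)\|_{L^6(\R^3)}^2 \leq C\, i(f(t_0))
\]
uniformly in $t\in[t_0,T^{*})$. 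An existing continuation result for \eqref{e:landauequation} in the range $\gamma\in[-3,1]$ asserts that a uniform $L^{3}_{v}$ bound on $f$ prevents blow-up; this forces $T^{*}=\infty$.

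The remaining assertions follow from now-standard arguments applied to the global classical solution: strict positivity for $t>0$ from a Harnack-type lower bound for the Landau operator, the Schwartz property from parabolic smoothing combined with propagation of polynomial moments of arbitrary order, the Maxwellian upper bound on $[0,\infty)$ from the same barrier argument used locally, and uniqueness from a contraction estimate in a weighted space valid for Maxwellian-dominated solutions. The monotonicity of $i(f(t))$ on $(0,\infty)$ is then immediate from Theorem \ref{t:main}. The main obstacle I anticipate is organizational rather than conceptual: one must assemble compatible versions of local existence, Maxwellian propagation, and the $L^{3}_{v}$-based continuation criterion simultaneously for the full range $\gamma\in[-3,1]$. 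The genuinely new input is the Fisher information monotonicity, which supplies the previously missing critical-space bound and thereby closes the long-standing question of blow-up in the very-soft-potential range.
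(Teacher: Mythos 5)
Your overall strategy is the same as the paper's: local solution plus continuation criterion, Fisher monotonicity from Theorem \ref{t:main} giving a time-uniform $L^3$ bound via $\|f\|_{L^3}=\|\sqrt f\|_{L^6}^2\lesssim i(f)$, and a conditional regularity/continuation argument to rule out blow-up, with uniqueness quoted at the end. The gap is in the single sentence you lean on hardest: ``an existing continuation result for the range $\gamma\in[-3,1]$ asserts that a uniform $L^3_v$ bound prevents blow-up.'' No such off-the-shelf statement covers the full range in unweighted $L^3$. The local theory the paper uses (Theorem \ref{t:henderson-snelson-tarfulea}, from \cite{henderson2020}) continues the solution only as long as the \emph{weighted} norm $\|f(t)\|_{L^\infty_{k_0}}$ stays finite, and the known conditional regularity results in the very-soft range are either weighted $L^p_k$ conditions (Theorem \ref{t:silvestre2017}) or Prodi--Serrin criteria stated for specific settings (e.g.\ \cite{golding2023local} for Coulomb, \cite{alonso2023solutions}). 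Bridging from the unweighted $L^3$ bound to the weighted continuation criterion is exactly where the paper does real work: it propagates moments $\|f(t)\|_{L^1_q}$ (Theorem \ref{t:moments}), interpolates against $L^3$ to get $\|f(t)\|_{L^p_k}$ for $p\in(1,3)$ and $k$ large, applies Theorem \ref{t:silvestre2017} to obtain a uniform $L^\infty$ bound, and only then invokes Theorem \ref{t:cameron2018} to propagate the Gaussian upper bound and hence bound $\|f\|_{L^\infty_k}$ for all $k$, which is what the continuation criterion actually requires. Calling this ``organizational'' understates it: the weights and decay are the crux in the range $\gamma<-2$, not bookkeeping.

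A second, related misordering: you propose to propagate the Maxwellian upper bound on $[0,T^*)$ by a ``standard barrier argument'' \emph{before} any $L^\infty$ control. For $\gamma<-2$ the barrier argument needs pointwise upper bounds on the coefficients $\bar a_{ij}$, which are not available from mass, energy and entropy alone; this is precisely why Theorem \ref{t:cameron2018} assumes $\|f(t)\|_{L^\infty}\leq C_0$ and $\|f(t)\|_{L^1_k}\leq C_0$, and why the paper applies it only \emph{after} deriving the uniform $L^\infty$ bound from the Fisher information. If you reorder your steps accordingly (moments, interpolation, conditional $L^\infty$ bound, then Maxwellian propagation, then the weighted continuation criterion), your argument becomes the paper's proof; as written, the chain from the $L^3$ bound to $T^*=\infty$ is not closed.
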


The remarkable new feature of Theorem \ref{t:main2} is that it applies to the very-soft potential range $\gamma \leq -2$. The most important case of Theorem \ref{t:main2} is $\alpha(r) = r^{-3}$, which is the original equation by Landau and corresponds to the evolution of the velocity density of charged particles that interact by Coulomb potentials. Regularity estimates for this equation have remained an elusive well-known open problem for several years. The difficulty comes from the fact that the reaction term is too singular to be bested by the diffusion term when we only use the previously known controlled coercive quantities of mass, energy and entropy. The boundedness of the Fisher information, which is provided by Theorem \ref{t:main}, overcomes this difficulty altogether.

\begin{remark}
Theorem \ref{t:main2} is a relatively direct consequence of Theorem \ref{t:main} combined with well known techniques. There are various short-time well posedness results in the literature that one can apply, as well as conditional a-priori estimates. A slightly different version of Theorem \ref{t:main2} would result from different choices between them. For example, it is possible to replace the Maxwellian upper bound for $f_0$ with the condition $f_0 \in L^\infty_k$ for $k$ sufficiently large, and it would lead to correspondingly weaker decay conditions for the solution $f$.

Using our Theorem \ref{t:main}, minimalist assumptions on the initial data for which the Cauchy problem is solvable are investigated in the recent preprints \cite{amelie2024,sehyun2024dissipation}.
\end{remark}

\begin{remark}
A version of Theorem \ref{t:main} for the homogeneous Boltzmann equation is obtained in \cite{imbert2024monotonicity}. In that recent preprint, Cyril Imbert, Cedric Villani and the second author reproduce much of the analysis of this paper in the context of the Boltzmann equation. However, a new difficulty emerges that requires the development of a highly nontrivial integro-differential version of the inequality of Section \ref{s:logpoincare}.
\end{remark}

\subsection{Historical remarks and references}
Given that the Landau equation is one of the main equations in statistical mechanics, especially in the Coulombic case $\alpha(r) = r^{-3}$, it is no surprise that it has received a significant amount of attention from the mathematical community through the years. It is impossible to review all the literature in the subject. We discuss some of the most relevant results below.

The analysis of the Boltzmann equation, as well as the Landau equation, involves significantly simpler formulas in the case of the Maxwell-molecules model (which is $\alpha \equiv 1$) than in the case of general interaction potentials. One of the reasons is that there is a manageable expression for the Fourier transform of the operator due to Alexander Bobylev (see \cite{bobylev1975} and \cite{bobylev1988}). It is also well known that the evolution of the equation is contractive with respect to the quadratic Wasserstein distance (see \cite{tanaka1978} and \cite{bolley2007}), and that the Fisher information is non-increasing along the flow (see \cite{villani1998fisherboltzmann} and \cite{villani2000fisherlandau}). We naturally wonder if these properties fail to be true in the case of other interaction potentials, or if they hold in more generality but proofs are only well understood in the case of Maxwell molecules due to the simpler arithmetic structure of the operator. Is the Fisher information monotone decreasing for the Boltzmann and Landau equation for some interaction potential other than Maxwell molecules? What about the entropy dissipation? What about the contractivity of the Wasserstein distance? We answer the first of these questions for the Landau equation in Theorem \ref{t:main}.

It is rather unusual to find a new explicit Lyapunov functional for a very well studied equation in mathematical physics. Moreover, most basic conserved and monotone quantities in PDEs are verified by a relatively simple computation. It is also unusual to find a simple Lyapunov functional whose proof is nontrivial.

The first reference to the monotonicity of the Fisher information in a kinetic equation is in an interesting paper by McKean in 1966 \cite{mckean1966}. He considers Kac's 1D caricature of the Boltzmann equation with Maxwell molecules. He proves that the Fisher information is monotone in this context. The paper includes a number of opinions and conjectures including the following.
\begin{itemize}
\item He writes that the Fisher information ``\emph{probably fails to decrease}'' in the 3D problem.
\item He conjectures (in the context of the Kac equation) that the functionals that result as higher order derivatives of the entropy by heat flow are all monotone. He reports that he tried to prove it but could not do it.
\item He conjectures that the entropy dissipation for the Boltzmann equation should be monotone decreasing. He even suggests that its derivative may be monotone as well.
\end{itemize}

In 1992, Giuseppe Toscani proved the monotonicity of the Fisher information for the 2D Boltzmann equation in the Maxwell molecules case \cite{toscani1992}. Toscani conjectures that the Fisher information can probably be proved to be monotone ``\emph{at least in the case of inverse power interaction potentials}'', but he states that it is still an open question. 

In 1998, Cedric Villani proved the monotonicity of the Fisher information for the Boltzmann equation with Maxwell molecules in arbitrary dimension \cite{villani1998fisherboltzmann}. The proof, which one may say uses similar ideas as in Toscani's paper, involves more complicated geometry and formulas. In 2012, Matthes and Toscani provide a very short alternative proof based on Fourier analysis \cite{matthes2012}. This proof is very specific to $\alpha \equiv 1$ and provides no intuition about other interaction potentials. In the introduction of \cite{villani1998fisherboltzmann}, Villani writes ``\emph{We also investigate briefly the case of arbitrary potentials, and show precisely why the Maxwellian case seems to depart from the other}'', which reflects the limitations of methods available then when it came to general power law potentials.

The monotonicity of the Fisher information for the Boltzmann equation implies the monotonicity for the Landau equation as a limit case. In 2000, Villani wrote a direct proof in the case of the Landau equation with Maxwell molecules \cite{villani2000fisherlandau}. While the paper \cite{villani2000fisherlandau} is fairly short, the proof is nontrivial. The intuition is arguably harder to grasp than in the Boltzmann case. In Section \ref{s:mm}, we provide an alternative proof for the Landau equation in the Maxwell molecules case, which is in some ways the starting point for our general method for Theorem \ref{t:main}.

Other recent publications studying the evolution of the Fisher information are \cite{alonso2019} for the hard-potential case and \cite{meng2023} for moderately soft potentials. They present upper bounds (not monotonicity) for the Fisher information that are uniform in time. These are scenarios in which there are well known global-in-time regularity estimates.

\medskip

The great majority of the regularity estimates for the Landau equation in the past use a decomposition for the Landau collision operator \eqref{e:landauoperator} as a sum of a diffusion term plus a lower order term. In non-divergence form it reads
\[ q(f) = \bar a_{ij} \partial_{ij} f + \bar c f,\]
where
\[ \bar a_{ij} = \int_{\R^3} \alpha(|v-w|) a_{ij}(v-w) f(w),\]
and $\bar c = -\partial_{ij} \bar a_{ij}$. Some ellipticity bounds can be deduced for the coefficients $\bar a_{ij}$ based only on the mass, energy and entropy of $f$. The reaction term $\bar c f$ is more singular when $\gamma$ is more negative. The majority of the estimates in the literature are obtained using parabolic estimates for the diffusion term and using them to control the other term. We do not use this decomposition in our proof of Theorem \ref{t:main}. When $\gamma \geq 0$, the reaction term is very simple because $\bar c$ is bounded point-wise in terms of the mass and energy of $f$. When $\gamma \in [-2,0]$, the reaction term can still be controlled with the help of ellipticity estimates from the diffusion term. For the very-soft potential range $\gamma < -2$, the reaction term is too singular to be controlled from the diffusion term. This is vaguely the reason of the main difficulty in establishing unconditional bounds in the very-soft potential range.

The early results on classical well-posedness for the Landau equation focused on the Maxwell molecules case ($\gamma=0$) or hard potentials ($\gamma > 0$). Cedric Villani investigated the Maxwell-molecules case first \cite{villani1998spatiallyhomogeneous}, and later the case of hard potentials in collaboration with Laurent Desvillettes \cite{desvillettes2000,desvillattes2000II}. The regularity in the hard potentials case was revisited by El Safadi in \cite{elsafadi2007}. Our understanding of the global well posedness and smoothness for $\gamma  \geq 0$ is very satisfactory. For moderately soft potentials, there are regularity estimates in \cite{wu2014,silvestre2017,gualdani2019Ap}, from which one can construct global smooth solutions as in Theorem \ref{t:main2}.

For very soft potentials, which is the range $\gamma \in [-3,-2)$, the global classical well posedness of the equation has been an elusive and well-known open question for several years. This is the most interesting range because it includes the original Landau equation for Coulomb potentials which is $\gamma = -3$. The development of our understanding before this work was comparable with our current understanding of the Navier-Stokes equation (in terms of what the known results are). The results in the current literature can be roughly classified in the following groups.
\begin{description}
	\item[Global-in-time weak solutions.]

	In 1998, Cedric Villani defined a notion of generalized solution for the space-homogeneous Boltzmann and Landau equation \cite{villani1998Hsolutions}, which he called $H$-solution. He was able to prove the existence of global solutions of this kind, but not the uniqueness. In some way, Villani's result for the Landau equation plays the same role as Leray's global weak solutions do for the Navier-Stokes equation. See also \cite{peskov1977} for an earlier notion of generalized solution.


	From an entropy dissipation estimate, Laurent Desvillettes proved that the H-solutions constructed by Villani belong to $L^1([0,T], L^3_{-3}(\R^3))$ and are in fact weak solutions in a more classical sense \cite{desvillettes2015} (see also \cite{ji2023entropy}).

	Francois Golse, Maria Gualdani, Cyril Imbert, and Alexis Vasseur showed that the global weak solutions constructed by Villani are smooth outside of a potential set of times of dimension at most $1/2$ (see \cite{golse2022asens} and \cite{golse2021}).

	In a recent preprint \cite{golse2022local}, Francois Golse, Cyril Imbert and Alexis Vasseur prove that the set of potential singularities in $(t,v)$ has parabolic Hausdorff dimension at most $7/2$ in the case of Coulombic potential. This result is more or less comparable with the Caffarelli-Kohn-Nirenberg partial regularity theorem for Navier-Stokes, except that the dimension they obtain is larger in the case of the Landau equation. In \cite{golse2022local}, they also prove that axially symmetric solutions of the Landau equation with very soft potentials are smooth away from the axis of symmetry.

	\item[Short-time existence]

	Short time existence results are obtained by Nicolas Fournier and H\'el\`ene Gu\'erin \cite{fournier2009}, by Chris Henderson, Stanley Snelson and Andrei Tarfulea \cite{henderson2020}, by Hyung Ju Hwang and Jin Woo Jang \cite{hwang2020}, and by William Golding and Am\'elie Loher \cite{golding2023local}.

	\item[Perturbative theory around the Maxwellian]

	When the initial data is sufficiently close to a fixed Maxwellian in an appropriate sense, Yan Guo constructed a global smooth solution \cite{guo2002} (even for the space-inhomogeneous case). See also \cite{golding2023global} for a more recent result with weaker assumptions on the initial data but restricted to the space-homogeneous case.

	\item[Partial progress aiming at the regularity of solutions]

	Conditional regularity results are obtained by the second author in \cite{silvestre2017}, and by Maria Gualdani and the first author in \cite{gualdani2019Ap}. These works imply in particular that if the solution to \eqref{e:landauequation} stays bounded in certain $L^p$ space, for an appropriately large exponent $p$, then the solution will be smooth.

    Recent results by Ricardo Alonso, V\'eronique Bagland, Laurent Desvillettes and Bertrand Lods \cite{alonso2023solutions} and by William Golding and Am\'elie Loher \cite{golding2023local}, provide Prodi-Serrin type conditional regularity estimates depending on $f \in L^p([0,T],L^q(\R^3))$ for suitable pairs of exponents $p$ and $q$. 

	Other partial results regarding the regularity of the space-homogeneous Landau equation with Coulomb potentials are given in \cite{gamba2015,bedrossian2022,cabrera2023regularization,silvestre2023regularity,desvillettes2020new}.

	\item[Modified equations]

	In \cite{krieger2012}, Joachim Krieger and Robert Strain proposed an isotropic toy model that retains some of the features of the original Landau equation for the case of Coulomb potentials, and where blow up could be ruled out (for radial solutions). This model and other variations is further analyzed in \cite{gressman2012,gualdani2016,gualdani2022hardy,SnelsonIsotropic}.  Other simplified radially symmetric models are analyzed by Alexander Bobylev in \cite{bobylev2023}.
\end{description}

The difficulty of obtaining unconditional regularity estimates for the Landau equation with Coulomb potentials (resolved in Theorem \ref{t:main2}) is mentioned as an outstanding open problem in the majority of these papers.

The problem of establishing the regularity of the Landau equation with Coulomb potentials is also described in the open problems section of Villani's book \cite[Chapter 5. Section 1.3]{villani2002book}. Villani first argues that finite time blow up may be expected from an analogy between the Landau equation and the nonlinear heat equation. However, he reports that after seeing some numerical simulations by F. Filbet, he changed his mind and became convinced that blow-up should not occur. This is confirmed by our Theorem \ref{t:main2}.

In \cite{villani2025fisher}, Cedric Villani provides further details about the history of these problems, as well as a description of the ideas leading to the proof of the monotonicity of the Fisher information for the Boltzmann equation in \cite{imbert2024monotonicity}.

\subsection{Outline of the paper and main ideas in the proof.}
In this section we attempt to explain a rough outline of the ideas involved in the proof of Theorem \ref{t:main} and the general organization of the paper.

In Section \ref{s:preliminaries}, we review some of the results in the literature that we apply in this paper. None of the results presented in this preliminary section is new.

The first idea leading to Theorem \ref{t:main} is described in Section \ref{s:lifting}. We observe that we can write the Landau operator as a composition of a linear elliptic operator in $\R^6$, and the projection of a distribution function $F$ in $\R^6$ over its marginal. More precisely, for a function $F: \R^6 \to [0,\infty)$ with unit integral, we write
\[ \pi F (v) := \int_{\R^3} F(v,w) \dd w.\]
We also define the degenerate elliptic operator $Q$ acting on functions $F= F(v,w)$ as
\[ Q(F)(v,w) = (\partial_{v_i} - \partial_{w_i}) \left( \alpha(|v-w|) a_{ij}(v-w) (\partial_{v_j} - \partial_{w_j}) F \right). \]
With this notation, we observe that $q(f) = \pi Q(f \otimes f)$, where $(f\otimes f)(v,w) := f(v)f(w)$.

It turns out that the projection over marginals reduces the value of the Fisher information in the sense that for any symmetric probability distribution $F$ on $\R^6$, 
\[ i(f) = \frac 12 I(f\otimes f) \leq I(F) \ \text{ when } f = \pi F.\]
Here, we write $I(F)$ for the Fisher information applied to a function in $\R^6$. From this inequality we argue that the Fisher information will decrease along the direction of $q(f)$ if $I(F)$ decreases along the direction of $Q(F)$ for any function $F = F(v,w)$ defined on $\R^6$ such that $F(v,w) = F(w,v)$.

This is a substantial simplification. The Landau operator, $q$, which is nonlinear and nonlocal, is now replaced with a linear elliptic operator with explicit coefficients, $Q$. Within this framework the known monotonicity of the Fisher information for Maxwell molecules follows easily (see below). For all other potentials, estimating the derivative of $I$ along $Q$ is still nontrivial and requires further ideas.

In Section \ref{s:vectorsb} we observe that for certain vector fields $\tilde b_1$, $\tilde b_2$ and $\tilde b_3$ we have
\[ Q(F) = \sum_{k=1,2,3} \sqrt \alpha \ \tilde b_k \cdot \nabla \left( \sqrt \alpha \ \tilde b_k \cdot \nabla F \right).\]
This observation, while elementary, will help us organize our formulas efficiently. We use these vectors and their properties extensively in the rest of the paper.

The transport of each of these vector fields $\tilde b_k \cdot \nabla$ individually goes by isometries in $\R^6$. Thus, $I(F)$ is constant along the flow of $\tilde b_k \cdot \nabla$. The fact that $I(F)$ is monotone decreasing in the direction of $Q(F)$ in the case of Maxwell molecules follows immediately from this and the convexity of the Fisher information $I$. This is explained in Section \ref{s:mm}.

Whenever $\alpha$ is not constant, the flow of $\sqrt \alpha \ \tilde b_k \cdot \nabla$ is not an isometry in $\R^6$. However, it is an isometry when we restrict it to the level sets of $\alpha$. With this idea in mind, in Section \ref{s:alternatives}, we define a modified Fisher information $I_{tan}$ that only takes into account the gradient of $F$ along the directions parallel to the level sets of $|v-w|$. This functional $I_{tan}$ is constant along the flow of $\sqrt \alpha \ \tilde b_k \cdot \nabla$. Using the same convexity argument as in the case of Maxwell molecules, we conclude that $\langle I_{tan}'(F),Q(F) \rangle \leq 0$, with a precise estimate of its dissipation in terms of second derivatives of $F$.

The difference between the full Fisher information $I(F)$ and the modified one $I_{tan}(F)$ is only one direction, which we call $\n$, and is the normalized gradient of $|v-w|$ in $\R^6$. We write 
\[ J(F) := I(F) - I_{tan}(F) = \iint_{\R^6} \frac{|\n \cdot \nabla F|^2} F \dd w \dd v.\]
In Section \ref{s:liebrackets}, we compute $\langle J'(F), Q(F) \rangle$ to express it in a form that resembles as closely as possible the structure of the dissipation of $I_{tan}$. We do it through a direct computation using the vector fields $\tilde b_k$ and Lie brackets. We end up with a negative diffusion term involving second derivatives of $F$ and a positive error term of the form
\[ R := \iint_{\R^6} \frac{|\alpha'|^2}{\alpha} \left( \tilde b_k \cdot \nabla \log F \right)^2 F \dd w \dd v.\]

In Section \ref{s:threediffusionterms}, we break the favorable diffusion terms obtained in the previous sections into three groups, depending on the directions of their second derivatives. We isolate one term, which we call $D_{spherical}$, that only takes into account second derivatives in the directions of the vector fields $\tilde b_k$'s. We claim that this diffusion term can be used to control the error term $R$. After using polar coordinates and performing some manipulations, we reduce the problem to an inequality for functions on the sphere $f : S^2 \to (0,\infty)$ of the following form
\[ \int_{S^2} |\nabla^2_\sigma \log f|^2 f \dd \sigma \geq \lambda \int_{S^2} |\nabla_\sigma \log f|^2 f \dd \sigma. \]
It is not difficult to prove that the inequality holds for some $\lambda>0$. However, it is important for us to obtain a nearly sharp constant $\lambda$. The maximum value that we are able to handle for $r |\alpha'(r)|/\alpha(r)$ in Theorem \ref{t:main} (whose value is $\sqrt{19}$) depends on the value of $\lambda$ from this inequality. We prove the inequality in Section \ref{s:logpoincare} (written slightly differently in terms of the vector fields $b_k$). It appears to be new as far as we are aware.

Finally, in Section \ref{s:globalexistence}, we explain how Theorem \ref{t:main}, combined with results from the literature, can be used to derive Theorem \ref{t:main2}. There is no difficulty in Section \ref{s:globalexistence}.

\subsection*{Acknowledgements}

We would like to thank David Bowman and Sehyun Ji for reading an earlier version of this manuscript and pointing out some typos. We also thank Maria Gualdani and Cyril Imbert for many useful discussions about this problem through several years. {We would also like to thank the anonymous reviewer for many helpful comments and suggestions.

\section{Preliminaries}
\label{s:preliminaries}

In this section we collect the results from the literature that we use in this paper. The results we state here are well known. They either appear explicitly or are straight forward minor modifications of the results in the literature. We provide references in every case.

As it is customary, we write $\langle v \rangle = (1+|v|^2)^{1/2}$ and, for any $p \in [1,\infty]$, $k \in \R$,
\[ \|f\|_{L^p_k(\R^3)} := \|\langle v \rangle^k f \|_{L^p(\R^3)}.\]

The mass, momentum and energy are constant along the flow of the equation. That is
\begin{align*}
\text{(mass)} & := \int_{\R^3} f(v) \dd v, \\
\text{(momentum)} & := \int_{\R^3} f(v) v \dd v, \\
\text{(energy)} & := \int_{\R^3} f(v) |v|^2 \dd v.
\end{align*}

To fix ideas, it is a comfortable choice to study solutions of the equation \eqref{e:landauequation} with unit mass and zero momentum. This will be preserved along the flow of the equation.

We recall the formula for the entropy.
\[ h(t) := \int_{\R^3} f(v) \log f(v) \dd v. \]
The entropy is non-increasing for solutions of \eqref{e:landauequation}. Before this work, it was the only natural quantity associated to the equation known to be monotone decreasing -- outside the Maxwell-molecules case $\alpha \equiv 1$.

To begin the analysis of the equation, we start from a short-time existence result. There exist a few in the literature, both in the space-homogeneous and space-inhomogeneous regime. The most complete that we know of is given in \cite{henderson2020}. Restricted to the space-homogeneous regime, it becomes the following result.

\begin{thm}[from \cite{henderson2020}] \label{t:henderson-snelson-tarfulea}
Assume $\alpha(r) = r^\gamma$ for some $\gamma \in [-3,0)$. Let $f_0: \R^3 \to [0,\infty)$ be an initial value so that $f_0 \in L^\infty_{k_0}$ with $k_0 = \max(5,15/(5+\gamma))$. There exists a positive time $T>0$ and a classical solution $f : [0,T] \times \R^3 \to [0,\infty)$ to the equation \eqref{e:landauequation} with initial data $f_0$. This solution satisfies the following properties.
\begin{itemize}
    \item $f(t,v) > 0$ for all $t>0$ and $v \in \R^3$.
    \item $f \in C^\infty((0,T)\times \R^3)$ with upper bounds for $t>0$ for all derivatives.
    \item If $f_0$ is continuous, then $f$ matches the initial data continuously. Otherwise, it is weakly continuous at $t=0$.
    \item The solution can be extended for as long as $\|f(t)\|_{L^{\infty}_{k_0}} < +\infty$.
    \item Let $T_E$ be the maximum time of existence and $\tau>0$. For all $m \in \mathbb N$, there exists a $k_m$ and an $\ell_m$ so that if $f_0 \in L^{\infty}_{k_m}$ then $D^{(m)} f(t) \in L^\infty([0,T_E-\tau],L^{\infty}_{k_m-\ell_m}(\R^3))$.

\end{itemize}
\end{thm}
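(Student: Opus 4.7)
My plan is to follow a regularization-compactness scheme, with the weighted $L^\infty$ a priori bound as the core mechanism. First I would regularize by replacing $\alpha(r) = r^\gamma$ with a smooth truncation such as $\alpha_\epsilon(r) = (r^2+\epsilon^2)^{\gamma/2}$ and mollifying the initial data. For each fixed $\epsilon > 0$ the collision kernel is smooth and bounded, so a Picard iteration in the weighted space $L^\infty_{k_0}$ (or a Galerkin scheme combined with linear parabolic Schauder theory) yields smooth approximate solutions $f^\epsilon$ on a common short time interval. The problem then reduces to producing uniform-in-$\epsilon$ control of $\|f^\epsilon\|_{L^\infty_{k_0}}$ and uniform interior regularity, so that a classical limit can be extracted.

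The heart of the proof is the uniform weighted $L^\infty$ estimate. Writing the equation in non-divergence form as $q(f) = \bar a_{ij}[f]\,\partial_{ij} f + \bar c[f]\,f$, one has $|\bar a_{ij}[f](v)| \leq C\langle v\rangle^{\gamma+2}$ in terms of the conserved mass and energy, while $|\bar c[f](v)|$ is bounded by a power of $\|f\langle v\rangle^{k_0}\|_{L^\infty}$ once $k_0$ is large enough. The specific value $k_0 = \max(5,15/(5+\gamma))$ is exactly what balances the near-singularity of $|v-w|^\gamma$ at $w=v$ against the decay of $f$ at infinity in the convolutions defining $\bar a$ and $\bar c$. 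Evaluating at a maximum point of $\langle v\rangle^{k_0} f(t,\cdot)$, the second-order term is nonpositive up to a controllable commutator with the weight, and one obtains a differential inequality
\[ \frac{d}{dt} \|\langle v\rangle^{k_0} f(t)\|_{L^\infty} \leq C\bigl(1 + \|\langle v\rangle^{k_0} f(t)\|_{L^\infty}\bigr)^N, \]
which gives a time of existence $T > 0$ depending only on $\|f_0\|_{L^\infty_{k_0}}$.

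With this bound, the coefficients $\bar a^\epsilon$ are uniformly elliptic on compact sets of $v$ (the lower bound on their smallest eigenvalue is a standard consequence of control on mass, energy, and entropy) and uniformly bounded, so parabolic De Giorgi-Nash-Moser estimates give uniform interior $C^\alpha$ regularity, Schauder theory upgrades to $C^{2,\alpha}$, and bootstrapping through the equation produces uniform $C^\infty_{\loc}((0,T)\times\R^3)$ bounds. Arzela-Ascoli extracts a classical solution in the limit; strict positivity for $t>0$ follows from the parabolic strong maximum principle. The continuation criterion is built in, since every estimate depends only on $\|f(t)\|_{L^\infty_{k_0}}$. Higher-regularity propagation with weight loss is obtained inductively, by differentiating the equation $m$ times and observing that each derivative loses a fixed number $\ell_m$ of weight powers through the convolution structure of $\bar a$ and $\bar c$. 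The main obstacle throughout is the weighted $L^\infty$ step itself: for very soft potentials, $\bar c$ is of the same order as the diffusion and cannot be absorbed into it by elementary means. Since only short-time existence is needed here, however, a Gronwall argument with superlinear right-hand side suffices—it is precisely to remove this obstruction at large times that Theorem \ref{t:main} is needed in the proof of Theorem \ref{t:main2}.
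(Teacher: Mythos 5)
This statement is not proved in the paper at all: it is quoted verbatim (in its space-homogeneous restriction) from \cite{henderson2020}, with only a remark explaining why the extra ``well distributed'' hypothesis of that reference is irrelevant in the homogeneous setting. So the only meaningful comparison is with the strategy of that reference, and your outline is indeed the standard one and close in spirit to it: non-divergence form $q(f)=\bar a_{ij}\partial_{ij}f+\bar c f$, a weighted $L^\infty$ barrier/maximum-principle estimate giving a short existence time and the continuation criterion, regularization plus parabolic De Giorgi--Nash--Moser and Schauder bootstrap for smoothness, and a Harnack-type argument for strict positivity.

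As a proof, however, several load-bearing steps are asserted rather than established, and at least two would fail as written. First, Picard iteration in $L^\infty_{k_0}$ does not close even for the $\eps$-regularized kernel, because the map $f\mapsto q(f)$ loses two derivatives; one must instead iterate by solving linear parabolic problems with frozen coefficients $\bar a[g],\bar c[g]$ and extract a fixed point via compactness, which already requires the Schauder machinery (and uniform-in-$\eps$ ellipticity of the regularized coefficients) before the limit is taken. Second, the ellipticity of $\bar a_{ij}$ degenerates as $|v|\to\infty$ (the eigenvalues behave like $\langle v\rangle^{\gamma}$ and $\langle v\rangle^{\gamma+2}$), so ``uniform interior estimates plus Arzel\`a--Ascoli'' yields $C^\infty_{\loc}$ but not the global weighted bounds on all derivatives claimed in the last two bullets; one needs weighted Schauder estimates or the change of variables of \cite{cameron2018} (as the paper itself does in Proposition \ref{p:schauder}) to convert local estimates into decaying ones. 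Finally, your explanation of the exponent $k_0=\max(5,15/(5+\gamma))$ is not right: for $\gamma>-3$ the pointwise bound $\bar c\lesssim \|f\|_{L^\infty}+\text{(mass)}$ needs no weight at all, and for $\gamma=-3$ one has $\bar c= 8\pi f$; the specific value of $k_0$ in \cite{henderson2020} comes from finer interpolation/coercivity arguments (controlling the reaction term against the diffusion and the hydrodynamic quantities), not from the naive balancing you describe, so that step of your sketch would need to be replaced by the actual argument of the reference. You also leave untreated the initial-data statement (continuous attainment versus weak continuity at $t=0$) and the uniformity in $\eps$ of the lower ellipticity bound along the regularization.
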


The main result in \cite{henderson2020} is for the space-inhomogeneous equation. There is an extra assumption for the initial data $f_0$, which they call \emph{well distributed}. This assumption is only relevant for the space-inhomogeneous case. In the space-homogeneous case, it would be automatically satisfied if $f_0$ is continuous and nonzero. Even if $f_0$ is not continuous, it is not difficult to see that $f(t,v)$ will be well distributed for any $t>0$ small.

There is also a more recent short-time existence result \cite{golding2023local}, which is specific to the space-homogeneous Landau equation with Coulomb potentials, for initial data that is merely in $L^1_k \cap L^p$ for some $p > 3/2$. The solution constructed in \cite{golding2023local} is also classical and smooth. The continuation criteria presented in \cite{golding2023local} is a simpler Prodi-Serrin type condition without weights.

The following result can be found in \cite[Theorem 3.7]{silvestre2017}. We use it to improve the continuation criteria from Theorem \ref{t:henderson-snelson-tarfulea} in the space-homogeneous case.

\begin{thm}[from \cite{silvestre2017}] \label{t:silvestre2017}
Assume $\alpha(r) = r^\gamma$ for some $\gamma \in [-3,2]$. Let $f : [0,T] \times \R^3 \to [0,\infty)$ be a classical solution to \eqref{e:landauequation}. Let $p>3/(5+\gamma)$, $p \geq 1$ and $k$ be sufficiently large (depending on $\gamma$). Assume that for all $t \in [0,T]$,
\[ \|f(t)\|_{L^p_k} \leq C_0.\]
Then $f$ is bounded for positive time with an upper bound
\[ \|f(t)\|_{L^\infty} \leq C_1(1+t^{-3/(2p)}) \]
for a constant $C_1$ that depends on $C_0$, the mass of $f_0$. and $\delta$.
\end{thm}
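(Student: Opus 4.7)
The approach is to rewrite the equation in non-divergence form and perform a De Giorgi--Nash--Moser $L^p\to L^\infty$ iteration. First I would write
\[ f_t = \bar a_{ij}(t,v)\,\partial_{ij} f + \bar c(t,v)\,f, \]
where $\bar a_{ij}(t,v) = \int_{\R^3} \alpha(|v-w|) a_{ij}(v-w) f(t,w)\,\dd w$ and $\bar c = -\partial_{ij}\bar a_{ij}$, which is essentially a Riesz-type potential of $f$ against $|v-w|^{\gamma}$. The matrix $\bar a_{ij}$ satisfies the classical lower ellipticity bound $\bar a_{ij}(t,v)\xi_i\xi_j \geq \lambda_0 \langle v \rangle^{\gamma} |\xi|^2$ for some $\lambda_0>0$ depending only on the mass, energy and entropy of $f$, all of which are automatically controlled for a classical solution.

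Next, for $q\geq 2$ and a threshold $M\geq 0$, I would test the equation against $(f-M)_+^{q-1}\langle v\rangle^{kq}$ and integrate in $v$. Using the ellipticity bound, the diffusion contributes a coercive term controlling $(f-M)_+^{q/2}\langle v\rangle^{kq/2}$ in a weighted $\dot H^1$ sense, which via the Sobolev embedding $\dot H^1(\R^3)\hookrightarrow L^6(\R^3)$ promotes to an $L^3$ bound on $(f-M)_+^{q}\langle v\rangle^{kq}$. The reaction and the commutator errors from the weights are then controlled by interpolating this higher norm against $\|(f-M)_+^{q}\langle v\rangle^{kq}\|_{L^1}$, using the $L^p_k$ hypothesis on $f$ through H\"older. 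The exponent condition $p>3/(5+\gamma)$ is exactly what makes this interpolation strictly sub-critical, so the reaction can be absorbed into a small fraction of the diffusion plus a lower-order remainder, yielding a closed energy inequality for $A_q(t):=\int (f-M)_+^{q}\langle v\rangle^{kq}\dd v$.

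Iterating the resulting inequality in Moser fashion along geometric sequences $q_j = 2^j q_0$, truncation levels $M_j\nearrow M_\infty$, and times $t_j\nearrow t$, with the base step supplied by the $L^p_k$ hypothesis, I would extract a summable recursion whose limit gives $\|f(t)\|_{L^\infty}\leq C_1(1+t^{-3/(2p)})$. The specific exponent $3/(2p)$ reflects the parabolic $L^p\to L^\infty$ smoothing rate in dimension $3$ and arises from summing the geometric series of time increments, exactly as for the heat equation.

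The main obstacle is the very-soft range with $\gamma$ close to $-3$, where $\bar c$ is nearly as singular as the diffusion can absorb and the threshold $p=3/(5+\gamma)$ is almost attained; the interpolation is tight, and a delicate bookkeeping of constants is required. The other delicate point is tracking the polynomial weights $\langle v\rangle^{kq}$ against the degenerate ellipticity $\langle v\rangle^{\gamma}$: this is where the hypothesis ``$k$ sufficiently large'' enters, to ensure that the error from commuting the weight past $\bar a_{ij}\partial_{ij}$ has the correct sign and is dominated by the coercive term uniformly along the iteration.
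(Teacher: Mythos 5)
This statement is not proved in the paper at all: it is quoted as a known preliminary, namely Theorem 3.7 of \cite{silvestre2017}, so the comparison has to be with that source. The proof there is not an $L^p\to L^\infty$ iteration. It is a pointwise barrier (first-contact-point) argument: one writes the equation in non-divergence form, considers the first time the solution touches an upper barrier of the form $\beta(t)\varphi(v)$ with $\beta(t)\sim C(1+t^{-3/(2p)})$ and $\varphi$ a decaying weight, and at the touching point plays the enhanced ellipticity of $\bar a_{ij}$ (which the equation generates from $f$ itself) against the $L^p_k$ control of the reaction to reach a contradiction; the rate $t^{-3/(2p)}$ comes directly from the choice of barrier, not from summing a Moser recursion. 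Your proposal, a weighted De Giorgi--Nash--Moser scheme, is therefore a genuinely different route; it is closer in spirit to the Prodi--Serrin type conditional bounds of \cite{alonso2023solutions,golding2023local}, and your identification of $p>3/(5+\gamma)$ as exactly the subcritical threshold for absorbing $\bar c f$ via Hardy--Littlewood--Sobolev, H\"older and Sobolev is correct (for $\gamma=-3$ it reduces to $p>3/2$ with $\bar c\sim f$). What the barrier proof buys is a short, iteration-free argument with the time rate built in; what your route would buy is a statement that fits naturally into the $\eps$-regularity/Prodi--Serrin framework.

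The soft spots in your sketch are concentrated where you are most casual. First, the coercivity you invoke is $\bar a_{ij}\xi_i\xi_j\gtrsim\langle v\rangle^{\gamma}|\xi|^2$ (with $\langle v\rangle^{\gamma+2}$ only in directions orthogonal to $v$), so the diffusion controls $\int\langle v\rangle^{\gamma}|\nabla (f-M)_+^{q/2}|^2\langle v\rangle^{kq}\dd v$, not the unweighted $\dot H^1$ norm of $(f-M)_+^{q/2}\langle v\rangle^{kq/2}$; the Sobolev embedding then costs a factor $\langle v\rangle^{|\gamma|}$ of weight at every step of the iteration, and since the number of steps is infinite you must build slack into the weight exponents (e.g.\ $\kappa_j q_j$ with $\kappa_j$ decreasing) to avoid losing everything — this is precisely where ``$k$ sufficiently large'' enters, rather than through any sign condition. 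Indeed, your claim that the weight-commutator errors ``have the correct sign'' is not correct in general; they are controlled in size using $\bar a_{ij}\lesssim\langle v\rangle^{\gamma+2}$, $|\partial_j\bar a_{ij}|\lesssim\langle v\rangle^{\gamma+1}$ and the decay bought by $k$, and for the hard end $\gamma\in(0,2]$ these coefficients grow in $v$, so this bookkeeping is not optional. Second, the uniform-in-$v$ rate $t^{-3/(2p)}$ does not follow from the heat-equation analogy alone, because the ellipticity degenerates as $|v|\to\infty$ for $\gamma<0$; one must trade the weight decay against the degeneracy (a weighted Sobolev/interpolation inequality, or a change of variables in $v$ as in \cite{cameron2018}) to keep the smoothing rate uniform. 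None of these seem fatal — the literature shows the scheme can be closed — but as written they are gaps, and they are exactly the points the barrier argument of \cite{silvestre2017} avoids.
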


While the conditional upper bound in Theorem \ref{t:silvestre2017} does not readily provide any decay for large velocities, they can be deduced using the technique in \cite{cameron2018}. The results in \cite{cameron2018} are only stated in the moderately-soft potential range $\gamma \in [-2,0]$. However, they apply in the full range of soft potentials when we know in addition that the solution $f$ is bounded.

\begin{thm}[essentially from \cite{cameron2018}] \label{t:cameron2018}
Assume $\alpha(r) = r^\gamma$ for some $\gamma \in [-3,0]$. Let $f : [0,T] \times \R^3 \to [0,\infty)$ be a classical solution to \eqref{e:landauequation}. Assume that for $k$ sufficiently large and all $t \in [0,T]$,
\[ \|f(t)\|_{L^\infty} \leq C_0, \qquad \|f(t)\|_{L^1_k} \leq C_0.\]
Assume further that $f_0 \leq C_1 \exp(-\beta |v|^2)$ for some $C_1>0$ and $\beta>0$ sufficiently small. Then
\[ f(t,v) \leq C_2 \exp(-\beta |v|^2).\]
for a constant $C_2$ that depends on $C_0$, $C_1$ and the mass of $f_0$.
\end{thm}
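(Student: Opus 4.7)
I would run the standard Maxwellian barrier/comparison argument for the Landau equation in non-divergence form, as developed in \cite{cameron2018}. The only new input needed to push the argument beyond the moderately-soft range $\gamma \in [-2,0]$ treated there into the very-soft range $\gamma \in [-3,-2)$ is to use the $L^\infty$ hypothesis on $f$ to control the singular reaction coefficient pointwise.

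Write $q(f) = \bar a_{ij} \partial_{ij} f + \bar c f$ with $\bar a_{ij}(v) = (\alpha(|\cdot|) a_{ij}) \ast f$ and $\bar c(v) = -\partial_i \partial_j \bar a_{ij}(v)$. From $\|f(t)\|_{L^\infty}, \|f(t)\|_{L^1_k} \leq C_0$ I would first establish the coefficient estimates, for $|v|$ large,
\[
\bar a_{ij}(v) v_i v_j \leq K \langle v\rangle^{\gamma+2}, \qquad \trace \bar a(v) \geq k_0 \langle v\rangle^{\gamma+2}, \qquad |\bar c(v)| \leq K'\langle v\rangle^\gamma.
\]
The two bounds on $\bar a_{ij}$ are standard and follow by splitting the $w$-integral over $\{|w|\leq |v|/2\}$, $\{|v-w|\leq 1\}$, and the remainder, using moments and $\|f\|_{L^\infty}$. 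For $\bar c$, a direct computation with $\alpha(r)=r^\gamma$ gives $\bar c = (6+2\gamma)|z|^\gamma \ast f$ when $\gamma > -3$, and $\bar c = 8\pi f$ at $\gamma = -3$; the same splitting bounds the convolution using $\|f\|_{L^\infty}$ near $|v-w|=0$ and $\|f\|_{L^1_k}$ on the tail. This is the main technical point in the very soft regime: without the $L^\infty$ hypothesis the singular convolution $|z|^\gamma \ast f$ is not controlled pointwise and the barrier argument breaks down.

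Next, set the candidate barrier $\bar f(v) := A e^{-\beta |v|^2}$. Using $\partial_{ij}\bar f = (4\beta^2 v_i v_j - 2\beta \delta_{ij})\bar f$, the coefficient bounds yield
\[
\bar a_{ij}\partial_{ij}\bar f + \bar c \bar f = \bigl(4\beta^2 \bar a_{ij} v_i v_j - 2\beta \trace \bar a + \bar c\bigr)\bar f \leq \bigl((4\beta^2 K - 2\beta k_0)\langle v\rangle^{\gamma+2} + K' \langle v\rangle^\gamma\bigr)\bar f.
\]
Since $\beta$ is small by hypothesis, the first bracket is at most $-\beta k_0$, and the $\langle v\rangle^\gamma$ term is negligible against the $\langle v\rangle^{\gamma+2}$ term for $|v|$ large. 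Hence there exists $R > 0$ such that $\bar a_{ij}\partial_{ij}\bar f + \bar c \bar f < 0$ on $\{|v| > R\}$. Choosing $A \geq \max(C_1,\, C_0 e^{\beta R^2})$ ensures $\bar f(0,\cdot) \geq f_0$ and $\bar f \geq f(t,\cdot)$ on $\{|v|\leq R\}$ for all $t \in [0,T]$.

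Finally, run the comparison. Applied to the strict barrier $\bar f_\eta := (1+\eta)\bar f$ ($\eta > 0$), the smoothness and polynomial pointwise decay of $f$ from Theorem \ref{t:henderson-snelson-tarfulea} ensure that if $f$ ever exceeded $\bar f_\eta$, a first contact point $(t_*, v_*)$ would exist, and the choice of $A$ forces $|v_*| > R$. At this point $\partial_t f \geq 0$, $\nabla f = \nabla \bar f_\eta$, and $D^2 f \leq D^2 \bar f_\eta$; using positivity of $\bar a_{ij}$,
\[
0 \leq \partial_t f = \bar a_{ij} \partial_{ij} f + \bar c f \leq \bar a_{ij}\partial_{ij}\bar f_\eta + \bar c \bar f_\eta < 0,
\]
a contradiction. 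Thus $f \leq \bar f_\eta$ everywhere, and sending $\eta \to 0$ gives the claimed bound with $C_2 = A$.
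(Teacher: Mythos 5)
Your overall strategy coincides with the paper's: write the collision operator in non-divergence form, use the $L^\infty$ and $L^1_k$ hypotheses to recover the coefficient upper bounds of \cite[Lemma 2.1]{cameron2018} by splitting the convolution into a small ball $B_r(v)$ (controlled by $\|f\|_{L^\infty}$), the region $|w|\le |v|/2$ (controlled by the mass/energy), and the far tail (controlled by $\|f\|_{L^1_k}$), and then run the Gaussian barrier comparison. Indeed the paper does exactly this splitting for $\bar a_{ij}e_ie_j$, notes that the improved bound in the direction parallel to $v$ (your $\bar a_{ij}v_iv_j \le K\langle v\rangle^{\gamma+2}$, which is the one that actually matters against the $4\beta^2 v_iv_j$ in the Hessian of the Gaussian) is proved along the same lines, and then defers the rest of the comparison argument wholesale to \cite[Section 5]{cameron2018}. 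Your $\bar a$ estimates and the formula $\bar c = 2(\gamma+3)\,|z|^\gamma\ast f$ (with $\bar c = 8\pi f$ at $\gamma=-3$) are correct.

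The gap is in your treatment of the reaction coefficient, and it occurs precisely at the Coulomb endpoint. At $\gamma=-3$ you have $\bar c = 8\pi f$, and the hypotheses $\|f(t)\|_{L^\infty}\le C_0$, $\|f(t)\|_{L^1_k}\le C_0$ give no pointwise decay of $f$ (these norms allow order-one bumps at arbitrarily large velocities), so the asserted bound $|\bar c(v)|\le K'\langle v\rangle^{\gamma}$ is essentially the conclusion you are trying to prove, not a consequence of the assumptions. This is fatal for the argument as written, because for $\gamma<-2$ the favorable term $2\beta\trace\bar a \sim \beta\langle v\rangle^{\gamma+2}$ tends to zero as $|v|\to\infty$, so a merely bounded $\bar c$ cannot be absorbed and the strict supersolution inequality on $\{|v|>R\}$ is unjustified. (For $-3<\gamma<-2$ your claim is salvageable, but it needs a second splitting inside $B_r(v)$, using that the local mass there is $\lesssim\langle v\rangle^{-k}$, with $k$ large depending on $\gamma$; the one-line interpolation you give only yields boundedness of $\bar c$, not decay.) The natural repair — at the first contact time $f\le \bar f_\eta$ everywhere, so $\bar c(v_*)$ can be bounded by the convolution against the barrier, e.g.\ $\bar c(v_*)=8\pi(1+\eta)Ae^{-\beta|v_*|^2}$ at $\gamma=-3$ — is not automatic: it requires $|v_*|$ large depending on $A$, while $A\ge C_0e^{\beta R^2}$ must already be large depending on $R$, a circular dependence. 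Closing it requires a further mechanism (for instance a time-growing amplitude so the bounded zeroth-order term is absorbed by $\partial_t\bar f$, at the price of time dependence in the constant, or first propagating pointwise polynomial decay of every order and only then passing to the Gaussian); this is exactly the portion of the proof the paper imports from \cite[Section 5]{cameron2018} rather than re-deriving. A second, more technical omission: a finite first-contact point need not exist, since the polynomial decay you quote for $f$ does not prevent $f e^{\beta|v|^2}$ from exceeding the barrier only in the limit $|v|\to\infty$; one needs a localization or penalization to exclude touching at infinity, again as in \cite{cameron2018}.
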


\begin{proof}[Sketch proof]
Theorem \ref{t:cameron2018} is proved in \cite[Section 5]{cameron2018} for the moderately soft potential range $\gamma \in (-2,0]$. The fact $\gamma > -2$ is only used for the initial $L^\infty$ bounds and the upper bounds on the coefficients given in \cite[Lemma 2.1]{cameron2018}. We included the extra assumptions that $\|f\|_{L^\infty} \leq C_0$ and $\|f(t)\|_{L^1_k} \leq C_0$ to take care of those requirements. We are only left to check that the upper bounds on the coefficients can be verified with our extra assumptions.

For $\gamma \leq -2$, following the proof of the computation for upper bounds for the coefficients $\bar a_{ij}$ in \cite[Lemma 2.1]{cameron2018}, we get
\begin{align*}
\bar a_{ij} e_i e_j &\lesssim \int |v-w|^{\gamma+2} f(w) \dd w \\
&\leq \int_{B_r(v)} |v-w|^{\gamma+2} f(w) \dd w + \int_{B_{|v|/2} \setminus B_r(v)} |v-w|^{\gamma+2} f(w) \dd w + \int_{\R^3 \setminus B_{|v|/2} \setminus B_r(v)} |v-w|^{\gamma+2} f(w) \dd w \\
&\lesssim r^{\gamma+5} \|f\|_{L^\infty} + \langle v \rangle^{\gamma+2} \|f\|_{L^1} + r^{\gamma+2} \langle v \rangle^{-k} \|f\|_{L^1_k}.
\end{align*}
We choose $r = \langle v \rangle^{(\gamma+2)/(\gamma+5)}$ and then $k$ large enough so that $r^{\gamma+2} \langle v \rangle^{-k} \leq \langle v \rangle^{\gamma+2}$. Thus, we obtain the same upper bounds for the coefficients $\bar a_{ij}$ as in \cite[Lemma 2.1]{cameron2018} for the case $\gamma \in [-3,-2]$ when we assume in addition that $\|f\|_{L^\infty}$ and $\|f\|_{L^1_k}$ are bounded. 

The upper bound in \cite[Lemma 2.1]{cameron2018} improves when $e$ is parallel to $v$, which is proved along the same lines. Once these upper bounds are established, the rest of Section 5 in \cite{cameron2018} follows without any major change.
\end{proof}

We also recall a result on the propagation of bounded moments. The following theorem is a simplified version of a result in \cite{desvillettes2015}.

\begin{thm} \label{t:moments}
If $f$ is a solution of the Landau equation \eqref{e:landauequation}, where $\alpha(r) = r^\gamma$ with $\gamma \in [-3,1]$. Assume that the initial data $f_0$ belongs to $L^1_k$ for some exponent $k \geq 0$. Then for all $t \in [0,T]$, $\|f(t)\|_{L^1_k} \leq C(T)$, for some upper bound $C(T)$ depending on $T$, $\|f_0\|_{L^1_k}$, and the mass, energy and entropy of $f_0$
\end{thm}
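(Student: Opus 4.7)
The plan is to derive a differential inequality $M_k'(t)\leq \rho(t)(1+M_k(t))$ for $M_k(t):=\|f(t)\|_{L^1_k}$ with a locally integrable $\rho$, and then close by Gronwall. For $k\in[0,2]$ the bound is immediate from conservation of mass ($M_0$) and energy ($M_2$) together with H\"older's inequality, so I would focus on $k>2$.

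For the weak formulation, I would test \eqref{e:landauequation} against $\varphi(v)=\langle v\rangle^k$, integrate by parts twice in \eqref{e:landauoperator}, and symmetrize the integrand under the swap $(v,w)\leftrightarrow(w,v)$. The algebraic identities $a_{ij}(z)z_j=0$ and $\partial_j a_{ij}(z)=-2z_i$ collapse the resulting expression --- in particular they make every contribution involving $\alpha'$ disappear --- leaving the clean symmetric formulation
\[
\int q(f)\varphi\,dv = \tfrac{1}{2}\iint f(v)f(w)\,\alpha(|v-w|)a_{ij}(v-w)\bigl[\partial_{ij}\varphi(v)+\partial_{ij}\varphi(w)\bigr]\,dv\,dw - 2\iint f(v)f(w)\,\alpha(|v-w|)(v-w)\cdot\bigl(\nabla\varphi(v)-\nabla\varphi(w)\bigr)\,dv\,dw.
\]
For $k\geq 2$ the function $\langle v\rangle^k$ is convex on $\R^3$, so $(v-w)\cdot(\nabla\varphi(v)-\nabla\varphi(w))\geq 0$ and the second integral --- which enters with the favorable sign --- can be discarded. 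Combined with the pointwise bound $|\nabla^2\varphi(v)|\leq C_k\langle v\rangle^{k-2}$ and the fact that $a_{ij}(z)$ has operator norm $|z|^2$, this gives
\[
\frac{d}{dt}M_k(t) \;\leq\; C_k\iint f(v)f(w)\,|v-w|^{\gamma+2}\,\langle v\rangle^{k-2}\,dv\,dw.
\]

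It remains to control this double integral by $\rho(t)(1+M_k(t))$. In the mildly-soft range $\gamma\in[-2,1]$ the kernel $|v-w|^{\gamma+2}$ is locally bounded and the elementary inequality $|v-w|^{\gamma+2}\lesssim\langle v\rangle^{\gamma+2}+\langle w\rangle^{\gamma+2}$ reduces everything to a combination of lower-order moments controlled by $M_k$, $M_2$ and $M_0$, yielding a bounded $\rho$. The genuine obstacle is the very-soft range $\gamma\in[-3,-2)$, where $|v-w|^{\gamma+2}$ carries a non-integrable local singularity that mass and entropy alone cannot absorb. Here one must exploit the entropy \emph{dissipation} of the Landau equation, which is a nonnegative, time-integrable quantity controlled by the entropy of $f_0$, and which produces --- via a Sobolev inequality for $\sqrt{f}$ --- a time-integrable $L^q_v$ bound on $f$ strong enough to absorb the singular kernel through H\"older's inequality. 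The hardest step is precisely this translation of entropy dissipation into kernel-appropriate integrability for $f$, which is the technical heart of \cite{desvillettes2015}; once it is in hand, one obtains $\rho\in L^1([0,T])$ depending only on mass, energy, and entropy of $f_0$, and Gronwall closes the argument.
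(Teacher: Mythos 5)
Your symmetrized weak formulation is correct: testing \eqref{e:landauequation} against $\varphi=\langle v\rangle^k$ and integrating by parts twice, the identities $a_{ij}(z)z_j=0$ and $\partial_j a_{ij}(z)=-2z_i$ do give $\partial_{z_j}[\alpha(|z|)a_{ij}(z)]=-2\alpha(|z|)z_i$, so the $\alpha'$ contributions cancel and your two-term formula (with the factor $-2$) is the standard one; discarding the second term for $k\geq 2$ by convexity is also fine. Note, for comparison, that the paper does not prove Theorem \ref{t:moments} at all: it imports it from \cite{desvillettes2015} for soft potentials (with \cite{villani1998spatiallyhomogeneous} and \cite{desvillettes2000} covering $\gamma\geq 0$). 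Your treatment of the very-soft range $\gamma\in[-3,-2)$ --- entropy dissipation, via Desvillettes' weighted coercivity estimate, yielding a time-integrable weighted $L^3$-type bound on $f$ that absorbs the local singularity $|v-w|^{\gamma+2}$ by H\"older, then Gronwall --- is precisely the argument of that reference, so deferring its technical heart to \cite{desvillettes2015} puts you on the same footing as the paper; just be aware that as written your proposal is a reconstruction of the cited proof rather than an independent one, and that the time-integrability of the dissipation uses the lower bound on the entropy, hence the mass and energy as well as the initial entropy (consistent with the constants in the statement). One must also track the negative weight in the $L^1_tL^3_{-3}$ bound against the weight $\langle v\rangle^{k-2}$ on the region $|v-w|\leq 1$, which works but is not automatic.

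There is, however, a genuine gap at hard potentials $\gamma\in(0,1]$, which the theorem as stated includes and which you fold into the ``immediate'' case. After you throw away the convexity term, the bound $|v-w|^{\gamma+2}\lesssim\langle v\rangle^{\gamma+2}+\langle w\rangle^{\gamma+2}$ produces the contribution $\iint f(v)f(w)\langle v\rangle^{k+\gamma}\,dv\,dw = M_0\,M_{k+\gamma}$, and for $\gamma>0$ the moment $M_{k+\gamma}$ is strictly higher than $M_k$: it is not controlled by $M_k$, $M_2$ and $M_0$, so the differential inequality $M_k'\leq\rho(1+M_k)$ does not close as written. The fix is exactly the term you discarded: for $\varphi=\langle v\rangle^k$ the combination $a_{ij}(v-w)\bigl[\partial_{ij}\varphi(v)+\partial_{ij}\varphi(w)\bigr]-4(v-w)\cdot\bigl(\nabla\varphi(v)-\nabla\varphi(w)\bigr)$ satisfies a Povzner-type inequality in which the leading $\langle v\rangle^{k+\gamma}$ terms appear with a \emph{negative} sign (for $|v|\gg|w|$ the first term contributes roughly $+k|v|^{k+\gamma}$ and the second roughly $-2k|v|^{k+\gamma}$), which is why for hard potentials moments are in fact generated, not merely propagated; see \cite{desvillettes2000}, which is also what the paper cites for that subrange. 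So either retain the negative term and run this cancellation for $\gamma\in(0,1]$, or restrict your argument to $\gamma\leq 0$ and quote \cite{desvillettes2000} for $\gamma>0$, as the paper does; your argument for $\gamma\in[-2,0]$ and the interpolation for $k\leq 2$ are correct as stated.
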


Theorem \ref{t:moments} is stated in \cite{desvillettes2015} for the Coulomb case $\gamma=-3$ only. The proof easily applies to any soft potential in the range $\gamma \in [-3,0]$ (and even slightly smaller than $-3$). The result in \cite{desvillettes2015} is also stated and proved for H-solutions, which makes it technically more complicated. The case $\gamma=0$ had been covered earlier in \cite{villani1998spatiallyhomogeneous}. For $\gamma>0$, it is proved in \cite{desvillettes2000} that there is even a gain of moments in the sense that $\|f(t)\|_{L^1_k}$ is bounded for $t>0$ even if $\|f_0\|_{L^1_k}$ is not. Note that for $\gamma<0$, even though $C(T)$ may a-priori grow for large time, it does not blow up in finite time.

For uniqueness of solutions, we cite \cite{fournier2009} for $\gamma >-3$ and \cite{fournier2010uniqueness} for $\gamma = -3$. The following theorem summarizes both cases.
\begin{thm} \label{t:uniqueness}
When $\alpha(r) = r^\gamma$ with $\gamma \in [-3,2)$, there exists at most one weak solution of \eqref{e:landauequation} that belongs to the space
\[ f \in L^\infty([0,T],L^1_2(\R^3)) \cap L^1([0,T],L^p(\R^3)).\]
Here $p=\infty$ for $\gamma=-3$, and $p=3/(3+\gamma)$ otherwise.
\end{thm}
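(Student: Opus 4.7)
The plan is to follow the probabilistic strategy of Fournier--Gu\'erin \cite{fournier2009} (for $\gamma \in (-3,2)$) and Fournier \cite{fournier2010uniqueness} (for $\gamma = -3$), reducing uniqueness for the PDE to pathwise uniqueness for an associated nonlinear McKean--Vlasov SDE. Writing the collision operator in non-divergence form as $q(f) = \bar a_{ij}[f]\partial_{ij}f + \bar c[f] f$, equation \eqref{e:landauequation} is the forward Kolmogorov equation of
\[ dX_t = B[f_t](X_t)\,dt + \sigma[f_t](X_t)\,dW_t, \qquad \mathrm{Law}(X_t) = f(t,\cdot), \]
with $\sigma \sigma^\top = 2\bar a[f]$ and $B$ the corresponding drift coefficient. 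The reduction is standard once enough regularity is available: pathwise uniqueness for this nonlinear SDE propagates to uniqueness of its time marginals.

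Given two weak solutions $f_1, f_2$ in the stated class with the same initial data, I would build processes $X^1_t, X^2_t$ on a common probability space, driven by a single Brownian motion, whose marginals at time $t$ are $f_1(t,\cdot)$ and $f_2(t,\cdot)$, and initially coupled optimally for $W_2$. Applying It\^o's formula to $|X^1_t - X^2_t|^2$, taking expectations, and bounding the drift and diffusion increments yields a differential inequality of the form
\[ \frac{d}{dt}\, \mathbb{E}|X^1_t - X^2_t|^2 \;\leq\; C(t)\,\mathbb{E}|X^1_t - X^2_t|^2, \]
where the inequality $\mathbb{E}|X^1_t - X^2_t|^2 \geq W_2(f_1(t),f_2(t))^2$ absorbs the contribution coming from the difference of laws into the same quantity controlling the trajectory difference. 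Gr\"onwall then forces $X^1 \equiv X^2$, and hence $f_1 = f_2$. The remaining work is to check $\int_0^T C(s)\,ds < \infty$ under the stated integrability hypotheses. The rate $C(t)$ aggregates Lipschitz-type estimates for the convolutions $\alpha(|\cdot|)\,a_{ij}(\cdot) \ast f_i$ and a few of their derivatives: tails are controlled by the $L^1_2$ moment bound, while the local singularity $|z|^{\gamma+2}$ at the origin is handled by H\"older's inequality against $f_i \in L^p$. The exponent $p = 3/(3+\gamma)$ is precisely the dual of the local integrability exponent of $|z|^{\gamma+2}$ in $\R^3$, and for $\gamma = -3$ the kernel is no longer dual to any finite $L^p$, forcing $p = \infty$.

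The main obstacle is the Coulomb endpoint $\gamma = -3$, where the drift sits on the borderline of tractability and a naive coupling breaks down; the required $L^\infty$ bound is essentially the sharp threshold reflecting this borderline. Fournier's argument in \cite{fournier2010uniqueness} exploits the structural cancellation built into $a_{ij}(z) = |z|^2 \delta_{ij} - z_i z_j$, which vanishes in the radial direction $z$ and thereby effectively absorbs one power of $|z|$ from the singular kernel, in order to close the Gr\"onwall step despite the delicate integrability of the Landau coefficients in the very-soft regime.
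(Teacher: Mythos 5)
Your proposal is essentially the paper's approach: the paper does not prove Theorem \ref{t:uniqueness} at all, but simply quotes it from Fournier--Gu\'erin \cite{fournier2009} ($\gamma>-3$) and Fournier \cite{fournier2010uniqueness} ($\gamma=-3$), and your coupling/Wasserstein--Gr\"onwall outline is exactly the strategy of those references (including the role of the cancellation in $a_{ij}$ at the Coulomb endpoint). The only quibble is a heuristic one: the borderline exponent $p=3/(3+\gamma)$ is dual to the $|z|^{\gamma}$-type singularities arising in the coefficient differences, not to $|z|^{\gamma+2}$ as you state, but this does not affect the structure of the argument.
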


The solutions that we obtain from Theorem \ref{t:henderson-snelson-tarfulea} are $C^\infty$ smooth for $t>0$. If $f_0$ is not continuous, the initial data is only achieved in the sense of weak continuity. Because of that, it is not irrelevant that Theorem \ref{t:uniqueness} applies to weak solutions. It tells us in particular that there cannot be two different solutions bifurcating from the same initial data at time zero. The precise definition of weak solution given in \cite{fournier2010uniqueness,fournier2009} is not necessarily important here, but only the fact that it is compatible with a smooth solution that achieves a potentially rough initial data by weak continuity.

As we mentioned in the introduction, the Fisher information of a nonnegative function $f:\R^3 \to [0,\infty)$ is given by the formulas
\begin{align*}
i(f) &:= \int_{\R^3} \frac{|\nabla f|^2}f \dd v \\
&= \int_{\R^3} |\nabla \log f|^2 \ f \dd v \\
&= 4 \int_{\R^3} |\nabla \sqrt f|^2 \dd v.
\end{align*}

Something must be clarified about the case when $f$ has vacuum regions. In that case the third formula is the only one that makes immediate sense and suggests that we should set the integrand as zero wherever $f=0$. This apparent difficulty with vacuum regions is actually not relevant for our analysis. In \cite{henderson2020}, the authors prove that the solution to the Landau equation becomes immediately strictly positive (even in the space inhomogeneous setting).

The Fisher information $i(f)$ is well defined for nonnegative functions $f$ so that $\sqrt f \in \dot H^1(\R^3)$. This will always be the case with the class of solutions that we work with (see Appendix \ref{a:byparts}). Moreover, the following lemma from \cite{toscani2000} indicates that the Fisher information is finite for all nonnegative functions that are sufficiently smooth and fast decaying at infinity.
\begin{lemma} \label{l:finite-fisher}
  For all $\eps >0$, there exists $C_\eps >0$ (only depending on dimension and $\eps$) such that for all $f \in H^2_{3/2+\eps}(\R^3)$,
  \[ I (f) \leq C_\eps \| f\|_{H^2_{3/2+\eps}}.\]
\end{lemma}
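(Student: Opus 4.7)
My plan is to combine a classical pointwise gradient bound for nonnegative $C^2$ functions with Sobolev embedding in three dimensions. The starting point is: for any $f \ge 0$ in $C^2(\R^3)$, any $v$, and any radius $r > 0$, letting $M_r(v) := \|D^2 f\|_{L^\infty(B_r(v))}$, one has either $|\nabla f(v)|^2 \le 2 f(v) M_r(v)$ (when $|\nabla f(v)| \le r M_r(v)$) or else the complementary bound $|\nabla f(v)| \le f(v)/r + r M_r(v)/2$. Both follow by Taylor expanding $g(t) := f(v - t \nabla f(v)/|\nabla f(v)|)$ and invoking $g \ge 0$: the quadratic upper bound
\[ g(t) \le f(v) - |\nabla f(v)|\, t + \tfrac{1}{2} M_r(v)\, t^2 \]
must stay nonnegative on $[0, r]$, which forces the claimed dichotomy upon minimizing in $t$.

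The second input is the weighted Sobolev embedding. Since $\|f\|_{H^2_{3/2+\eps}}$ controls the $H^2(\R^3)$ norm of $\langle v \rangle^{3/2+\eps} f$ up to constants, and $H^2(\R^3) \hookrightarrow L^\infty(\R^3)$ in three dimensions, one obtains the pointwise decay $f(v) \le C\, \langle v \rangle^{-(3/2+\eps)} \|f\|_{H^2_{3/2+\eps}}$. Dividing the pointwise bound on $|\nabla f|^2$ by $f$, integrating against $\dd v$, and applying H\"older's inequality against the weight $\langle v \rangle^{-(3+2\eps)}$, which lies in $L^1(\R^3)$ precisely because $\eps > 0$, yields the advertised estimate $I(f) \le C_\eps \|f\|_{H^2_{3/2+\eps}}$. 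Vacuum regions pose no difficulty because the pointwise inequality is a product involving $f(v)$ that vanishes where $f = 0$, matching the convention $|\nabla f|^2/f = 0$ there. Extension from smooth $f$ to a general $f \in H^2_{3/2+\eps}$ follows by mollification together with the lower semicontinuity of the Fisher information.

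The main technical obstacle is the mismatch between the $L^\infty$ control of $D^2 f$ demanded by the Taylor argument and the mere $L^2$ control supplied by the $H^2$ norm. This is handled by taking the radius $r = r(v)$ adaptively, so that only local averaged quantities of $D^2 f$ enter the estimate; these can then be absorbed into the weighted $L^2$ structure of $H^2_{3/2+\eps}$ via a localized Sobolev estimate (for instance, a ball-wise application of $H^2(B) \hookrightarrow L^\infty(B)$ combined with a partition-of-unity or a maximal-function bound), at the expense of the loss encoded in the exponent $3/2 + \eps$.
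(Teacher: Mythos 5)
Your argument hinges on the quantity $M_r(v) := \|D^2 f\|_{L^\infty(B_r(v))}$, and in the first branch of the dichotomy the bound you integrate is $|\nabla f|^2/f \le 2 M_r(v)$. This is where the proof breaks under the actual hypothesis of the lemma: for $f \in H^2_{3/2+\eps}(\R^3)$ the Hessian is only (weighted) square integrable, and in three dimensions $H^2$ embeds into $C^{0,1/2}$, not $C^2$, so $M_r(v)$ is in general infinite. The patches you sketch do not close this gap. A ball-wise $H^2(B)\hookrightarrow L^\infty(B)$ embedding controls $f$, not $D^2 f$; and a maximal-function substitute is not routine, because the Taylor expansion along the fixed direction $\nabla f(v)/|\nabla f(v)|$ requires the average of $|D^2f|$ over a \emph{line segment}, which is not dominated by ball averages of an $L^2$ function (and even after averaging the Taylor inequality over a cone of directions to generate ball averages, one faces $\int M(|D^2f|)\dd v$, problematic since the Hardy--Littlewood maximal operator is unbounded on $L^1$ and $\langle v\rangle^{3+2\eps}$ is not an $A_2$ weight). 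Nor can mollification plus lower semicontinuity rescue the scheme: even for smooth $f$, your method only yields $I(f) \lesssim \int \|D^2 f\|_{L^\infty(B_1(v))}\dd v + \|f\|_{L^1}$, and the first term is not controlled by $\|f\|_{H^2_{3/2+\eps}}$ (it is at least $|B_1|\,\|D^2f\|_{L^\infty}$, and one can construct nonnegative $f_k$, e.g.\ a fixed bump plus small high-frequency concentrated oscillations, with bounded $H^2_{3/2+\eps}$ norm and $\|D^2f_k\|_{L^\infty}\to\infty$), so no uniform estimate survives the approximation. A correct proof must use only integrated information on the second derivatives; this replacement of the $L^\infty$ Hessian bound is precisely the missing idea. (For reference, the paper itself does not prove this lemma but quotes it from Toscani's work, so you are being judged on the argument alone.)

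A secondary, fixable slip: in the second branch you derive $|\nabla f| \le f/r + r M_r/2$ and then propose to ``divide $|\nabla f|^2$ by $f$'', which produces the uncontrolled term $r^2 M_r^2/f$ where $f$ is small. The right move is to use the case condition $|\nabla f| > r M_r$ to absorb, giving $|\nabla f| \le 2f/r$ and hence $|\nabla f|^2/f \le 4 f/r^2$, whose integral (say with $r\equiv 1$) is indeed bounded by $\|\langle v\rangle^{3/2+\eps} f\|_{L^2}$ via Cauchy--Schwarz against $\langle v\rangle^{-(3/2+\eps)} \in L^2(\R^3)$; this is also where the weight and $\eps>0$ genuinely enter, rather than through the pointwise decay of $f$ you invoke.
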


We use the notation $\langle i'(f),g \rangle$ to denote the Gateaux derivative of $i$ in the direction $g$. More precisely, if $f(t)$ is a curve of non-negative functions so that $\sqrt{f(t)} \in \dot H^1$ and $\partial_t f(0) = g$, then by definition
\[ \langle i'(f), g \rangle = \partial_t i(f(t))_{|t = 0}.\]
Likewise, $i''(f)$ denotes the quadratic form so that if $f(t)$ is second differentiable with respect to $t$ then,
\[ \langle i''(f)g, g \rangle + \langle i'(f), \partial_{tt} f_{|t=0} \rangle = \partial_{tt} i(f(t))_{|t = 0}.\]
It is a standard fact that the values of these formulas do not depend on the curve $f(t)$ other than through the values of $g=\partial_t f(0)$. Moreover, the convexity of $i$ corresponds to $i''(f)$ being a positive quadratic form.

We use Theorem \ref{t:henderson-snelson-tarfulea} as the starting point to construct our global smooth solution in Theorem \ref{t:main2}. Combining it with the propagation of moments in Theorem \ref{t:moments} and the Maxwellian upper bounds of Theorem \ref{t:cameron2018}, the solutions we work with are very well behaved functions. They are $C^\infty$ smooth functions, they have rapid decay at infinity together with all their derivatives, and they are strictly positive. This is the class of functions that we work with in this paper, which minimizes the technical inconveniences in our computations. A posteriori, Theorem \ref{t:main}, and most of the lemmas in this paper, apply to a much wider class of functions by a density argument (see Appendix \ref{a:byparts}). However, we will make no attempt to classify the most general class of functions for which our results make sense.

Even when $f$ and $|\nabla f|$ decay faster than any algebraic rate as $|v| \to \infty$, it is not completely obvious that $|\nabla f|^2 / f$ is integrable in $\R^3$. Moreover, there are several integrals throughout this paper that involve first, second, and even third derivatives of $\log f$, and these quantities may grow for large velocities. In the same vein as Lemma \ref{l:finite-fisher}, it can be argued that if sufficiently many derivatives of $f$ decay sufficiently fast as $|v| \to \infty$, then all the integrals in this article make sense. However, we can also avoid this technical annoyance altogether by approximating the function $f$ with one that is bounded from below and above by a multiple of the same Maxwellian. We do not want to distract the reader with this very standard technical point, so we explain it in Appendix \ref{a:byparts}. For the rest of this paper, we manipulate integrals involving the function $f$ (or later $F$ in the lifted variables) multiplying various derivatives of its logarithm without further comments.

\section{Lifting and projection}
\label{s:lifting}

We have found that the question of monotonicity of $i(f)$ for $f$ solving \eqref{e:landauequation} is essentially the same as the question of monotonicity of the Fisher information for a specific linear second order equation in a space with double the original number of variables. This is a significant simplification, it means we can understand our question by studying a simpler and concrete evolution equation, and one that is linear and local -- in contrast with the original equation. The starting point is realizing that $q(f)$ is defined as the integral of a function in a space with twice the number of variables, and therefore it makes sense to ``lift'' the flow and any quantity of interest into this space with double the variables. The purpose of this section is to describe the usage of these lifting and projection operations.

Given a function $f :\R^d \to \R$, we can build a function $F:\R^{d+d} \to \R$ by taking the tensor product of $f$ with itself, as follows
\begin{align*}
  F(v,w) = (f \otimes f)(v,w) := f(v) f(w).
\end{align*}
This defines a map from scalar functions in $\mathbb{R}^d$ to scalar functions in $\mathbb{R}^{d+d}$. Conversely, we define the following \emph{projection} operator 
\[ \pi(F)(v) := \int_{\R^d} F(v,w) \dd w .\]
This is an operator that maps a function $F(v,w)$ defined in $\R^{d+d}$ into a function $\pi(F) : \R^d \to \R$. The map $\pi$ is a bounded linear map from $L^1(\mathbb{R}^{d+d})$ to $L^1(\mathbb{R}^d)$. Furthermore observe that if $F=F(v,w)$ is a probability density in $\mathbb{R}^{d+d}$, then $\pi(F)$ is simply the marginal probability distribution in the $v \in \mathbb{R}^d$ variable. 

We will loosely use the terminology that the variables $(v,w) \in \mathbb{R}^{d+d}$ are the ``lifted variables'', functions in $\mathbb{R}^{d+d}$ are lifted functions, and so on. The next ingredient is a second order differential operator for lifted functions. Concretely, we consider the following degenerate-elliptic operator that applies to functions $F = F(v,w)$. 
\begin{equation} \label{e:Q}
 Q(F)(v,w) := (\partial_{v_i} - \partial_{w_i}) \left( \alpha(|v-w|) a_{ij}(v-w) \left( \partial_{v_j} - \partial_{w_j} \right) F(v,w) \right). 
\end{equation}
For any function $f = f(v)$, we may study the initial value problem associated to the linear operator $Q$ with initial data $f \otimes f$.
\begin{equation} \label{e:liftedevolution}
\begin{aligned}
F(0,v,w) &= f(v) f(w), \\
F_t &= Q(F)
\end{aligned}
\end{equation}

The equation \eqref{e:liftedevolution} is a linear degenerate parabolic equation with explicit coefficients. Its classical well posedness is straight-forward from classical PDE theory.\footnote{In fact, one can see that the equation \eqref{e:liftedevolution} is literally the heat equation on each of the spheres that we describe in \eqref{e:sphere} with diffusion coefficient $\alpha$. See Remark \ref{r:sphere}.} The integral of $F$ on $\R^6$ is constant in time. From the symmetry of the initial data, and the symmetry of the equation, we deduce that for all $t>0$ the solution must be symmetric in the sense that $F(t,v,w) = F(t,w,v)$.

\begin{lemma} \label{l:qwithpi}
For any twice-differentiable function $f$ which is non-negative and has mass one, the Landau operator $q$ (given in \eqref{e:landauoperator}) coincides with
\[ q(f) = \pi (Q(F)) = \partial_t \left[ \pi F \right]_{|t=0},\]
where $F$ is the solution of \eqref{e:liftedevolution}.
\end{lemma}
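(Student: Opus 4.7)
The plan is to verify the two stated identities through a direct, short calculation that amounts to an integration by parts together with an interchange of derivative and integral.

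First I would unfold the definition of $\pi(Q(F))$ applied to $F = f \otimes f$:
\[
\pi(Q(f\otimes f))(v) = \int_{\R^3} (\partial_{v_i} - \partial_{w_i})\bigl(\alpha(|v-w|)\, a_{ij}(v-w)(\partial_{v_j} - \partial_{w_j})[f(v)f(w)]\bigr)\dd w,
\]
and split the integrand along the two pieces of the outer first-order operator $(\partial_{v_i}-\partial_{w_i})$. For the $\partial_{w_i}$ piece, the integrand is a pure $w$-divergence of a vector field that decays rapidly as $|w|\to\infty$ (since $f$ is twice differentiable with fast decay, which is the class of functions fixed in Section \ref{s:preliminaries}), so it vanishes by the divergence theorem. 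This is the only step that uses more than pure algebra, and it is where the decay hypothesis on $f$ enters; it is routine but ought to be mentioned explicitly.

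For the surviving $\partial_{v_i}$ piece, since $v$ is a parameter of the integration in $w$, I would pull $\partial_{v_i}$ outside the integral sign (again justified by the decay of $f$ and its derivatives, uniformly in bounded sets of $v$). What remains inside the integral is exactly the integrand appearing in the definition \eqref{e:landauoperator} of $q(f)$, so we recover
\[
\pi(Q(f\otimes f))(v) = \partial_{v_i}\int_{\R^3}\alpha(|v-w|)\,a_{ij}(v-w)(\partial_{v_j}-\partial_{w_j})[f(v)f(w)]\dd w = q(f)(v).
\]

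Finally, for the identity involving $\partial_t$, I would observe that $\pi$ is a bounded linear operator that is independent of $t$, so it commutes with $\partial_t$ under the usual differentiation-under-the-integral justification. Consequently $\partial_t[\pi F] = \pi[\partial_t F] = \pi(Q(F))$, and evaluating at $t=0$ (where $F = f\otimes f$) reduces this to the previous identity. The main obstacle, such as it is, is merely the bookkeeping needed to justify the interchange of $\partial_v$ (resp.\ $\partial_t$) with the integration in $w$ and the vanishing of the boundary terms; no new ideas are required beyond the standing smoothness-and-decay assumption on $f$.
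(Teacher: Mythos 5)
Your argument is correct and follows essentially the same route as the paper: unfold the definition, discard the $\partial_{w_i}$ part as a $w$-divergence integrating to zero, pull $\partial_{v_i}$ outside the $w$-integral to recover \eqref{e:landauoperator}, and use linearity of $\pi$ to commute it with $\partial_t$. The only difference is that you spell out the decay/interchange justifications slightly more explicitly than the paper does, which is fine.
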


\begin{proof} The proof follows by a straight-forward substitution. We see that by the linearity of $\pi$
\begin{align*}
\partial_t \left[ \pi (F) \right]_{|t=0} &= \pi (Q(F))_{|t =0} \\
&= \int_{\R^3} \left( (\partial_{v_i} - \partial_{w_i}) \left( \alpha(|v-w|) a_{ij}(v-w) \left( \partial_{v_j} - \partial_{w_j} \right) F(0,v,w) \right) \right) \dd w
\intertext{The derivatives with respect to $w_i$ integrate to zero respect to $\dd w$. Thus,}
&= \partial_{v_i} \int_{\R^3} \alpha(|v-w|) a_{ij}(v-w) \left( \partial_{v_j} - \partial_{w_j} \right) F(0,v,w) \dd w \\
&= q(f).
\end{align*}
\end{proof}
We use the letter $I$ for the Fisher information in the \emph{lifted} variables. For $F : \R^{6} \to (0,\infty)$,
\begin{equation}  \label{e:fisherlifted}
\begin{aligned}
I(F) &:= \iint_{\R^{6}} \frac{|\nabla F|^2}F \dd w \dd v \\
&= \iint_{\R^{6}} |\nabla \log F|^2 \ F \dd w \dd v \\
&= 4 \iint_{\R^{6}} |\nabla \sqrt F|^2 \dd w \dd v
\end{aligned}
\end{equation}
When $F$ has vacuum regions, only the third line makes sense literally. For the first two formulas to make sense, and agree with the third, we must set $|\nabla F|^2 / F = |\nabla \log F|^2 F = 0$ when $F=0$.

The next lemmas relate $i(f)$ with $I(F)$. The first one is very standard.
\begin{lemma} \label{l:i(f)=I(fxf)}
 For any non-negative $C^1$ function $f$ with $\int f\;dv =1$ and $i(f)<\infty$, we have
\[ i(f) = \frac 12 I(f \otimes f).\]
\end{lemma}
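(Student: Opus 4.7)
The plan is to verify the identity by direct substitution of $F = f \otimes f$ into the definition \eqref{e:fisherlifted} and then applying Fubini together with the normalization $\int f \, dv = 1$. All the work is in one short computation; there is no real obstacle, the only subtle point being to make sure the vacuum-region convention is handled consistently.

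First I would compute the gradient in the lifted variables. Since $F(v,w) = f(v)\,f(w)$, we have
\[ \nabla_{(v,w)} F = \bigl(\nabla f(v)\, f(w),\; f(v)\, \nabla f(w)\bigr), \]
so
\[ |\nabla F(v,w)|^2 = |\nabla f(v)|^2\, f(w)^2 + f(v)^2\, |\nabla f(w)|^2. \]
Dividing by $F(v,w) = f(v) f(w)$ on the set where $F > 0$ gives the clean splitting
\[ \frac{|\nabla F(v,w)|^2}{F(v,w)} = \frac{|\nabla f(v)|^2}{f(v)}\, f(w) \;+\; f(v)\, \frac{|\nabla f(w)|^2}{f(w)}. \]

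Next I would integrate over $\R^6$ using Fubini/Tonelli (both terms are nonnegative, so this is unconditional):
\[ I(F) = \int_{\R^3} \frac{|\nabla f(v)|^2}{f(v)} \dd v \cdot \int_{\R^3} f(w) \dd w \;+\; \int_{\R^3} f(v) \dd v \cdot \int_{\R^3} \frac{|\nabla f(w)|^2}{f(w)} \dd w = 2\, i(f), \]
where I used $\int f = 1$ twice. Dividing by $2$ yields the claim.

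The only bookkeeping concerns the vacuum set. The paper fixes the convention $|\nabla F|^2/F = 0$ at points where $F = 0$ (consistent with writing $I(F) = 4 \int |\nabla \sqrt F|^2$), and the same for $i(f)$. If $f(v_0) = 0$, then $F(v_0, w) = 0$ for every $w$, so the $w$-slice contributes zero to $I(F)$; symmetrically for $f(w) = 0$. In the split formula above, one should read $\frac{|\nabla f(v)|^2}{f(v)} f(w)$ as $0$ whenever $f(v) = 0$ (which is consistent because $\nabla f = 0$ a.e.\ on $\{f = 0\}$ for a $C^1$ function, so the vanishing of the second factor via the $\sqrt f$ interpretation matches). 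With this convention the identity above is valid on all of $\R^{6}$ and the computation goes through unchanged, finishing the proof.
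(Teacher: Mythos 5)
Your computation is correct, and the core splitting
\[
\frac{|\nabla (f\otimes f)|^2}{f\otimes f}(v,w) = \frac{|\nabla f(v)|^2}{f(v)}\,f(w) + f(v)\,\frac{|\nabla f(w)|^2}{f(w)}
\]
followed by Tonelli and the normalization $\int f = 1$ is exactly the paper's main step. Where you genuinely diverge is in the treatment of vacuum: the paper carries out the computation only under the extra hypothesis $f>0$ everywhere, and then removes it by an approximation argument, setting $f_\eps = f + \eps e^{-|v|^2}$, proving $i(f_\eps)\to i(f)$, $\int f_\eps \to \int f$, and $I(f_\eps\otimes f_\eps)\to I(f\otimes f)$ via dominated convergence (in passing it proves the slightly more general identity $I(f\otimes f)=2(\int f)\,i(f)$). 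You instead work directly on all of $\R^6$, invoking the paper's convention $|\nabla F|^2/F=0$ on $\{F=0\}$ and observing that the split identity remains valid there; this is legitimate, and in fact your ``a.e.'' hedge is unnecessary: for a nonnegative $C^1$ function every zero is a global minimum, so $\nabla f$ vanishes at \emph{every} point of $\{f=0\}$, hence $\nabla F(v,w)=0$ whenever $f(v)=0$ or $f(w)=0$ and both sides of the identity are zero pointwise, consistently with the $\sqrt{F}$ formulation. Since both integrands are nonnegative, Tonelli needs no integrability hypothesis, so your route is shorter and avoids the limiting argument altogether; the paper's approximation scheme has the mild advantage of being the same mechanism it reuses for Lemma \ref{l:I(F)>2i(pi(F))}, where a genuine division by $F$ (Cauchy--Schwarz) makes strict positivity harder to dispense with by a pointwise convention alone.
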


\begin{proof}
We are going to show a bit more: if $f$ is a $C^1$ function with $f\geq 0$ and $f$ is not identically zero, then
\begin{align*}
  I(f\otimes f) = 2\left ( \int_{\mathbb{R}^3} f\dd v\right ) i(f).  
\end{align*}
First, let us prove this when $f(v)>0$ for all $v$. Since $(f\otimes f)(v,w) = f(v)f(w)$,
\begin{align*}
  \nabla_{\mathbb{R}^{6}}(f\otimes f)(v,w) = (f(w)\nabla_{\mathbb{R}^3} f(v),f(v)\nabla_{\mathbb{R}^3} f(w)),
\end{align*}
so, 
\begin{align*}
  |\nabla (f\otimes f)|^2 = f(w)^2|\nabla f(v)|^2+f(v)^2|\nabla f(w)|^2.
\end{align*}
In particular, 
\begin{align*}
  \frac{|\nabla (f\otimes f)|^2}{(f\otimes f)}(v,w) & = \frac{f(w)^2|\nabla f(v)|^2+f(v)^2|\nabla f(w)|^2}{f(v)f(w)}\\
    & = f(w)\frac{|\nabla f(v)|^2}{f(v)}+f(v)\frac{|\nabla f(w)|^2}{f(w)}.
\end{align*}		
Then, we integrate over $\mathbb{R}^{6}$,
\begin{align*}
  I(f\otimes f) & = \iint_{\R^{6}}\frac{|\nabla (f\otimes f)|^2}{(f\otimes f)}(v,w)\dd w \dd v \\ 
    & = \int_{\R^3}\int_{\R^3} f(w)\frac{|\nabla f(v)|^2}{f(v)}+f(v)\frac{|\nabla f(w)|^2}{f(w)}\dd w \dd v\\
	& = 2\left (\int_{\R^3} f(w)\dd w \right )\left ( \int_{\R^3} \frac{|\nabla f(v)|^2}{f(v)}\dd v\right ).
\end{align*}		
For a general non-negative $f$ of class $C^1$, we write $f_\varepsilon = f + \varepsilon \phi$, where $\phi(x) = e^{-|v|^2}$. Then $f_\varepsilon>0$ for every $\varepsilon>0$ and
\begin{align*}
  I(f_\varepsilon\otimes f_\varepsilon) = 2\left ( \int_{\mathbb{R}^3} f_\varepsilon\;dv\right ) i(f_\varepsilon).  
\end{align*}
Now we note that
\begin{align*}
  i(f_\varepsilon) & = \int_{\mathbb{R}^3} \frac{|\nabla f(v)|^2+2 \varepsilon \nabla f(v)\cdot \nabla \phi(v) + \varepsilon^2 |\nabla \phi(v)|^2}{f_\varepsilon(v)}\;dv\\
    & = \int_{\mathbb{R}^3} \frac{|\nabla f(v)|^2}{f_\varepsilon(v)}\dd v + \int_{\mathbb{R}^3}\frac{2\varepsilon \nabla f(v)\cdot \nabla \phi(v) + \varepsilon^2 |\nabla \phi(v)|^2}{f_\varepsilon(v)}\dd v
\end{align*}
The first integral converges to $i(f)$ as $\varepsilon \to 0$ thanks to $i(f)<\infty$ and the dominated convergence theorem. The absolute value of the second integral is bounded from above by
\begin{align*}
  \varepsilon \int_{\R^3}\frac{|\nabla f(v)|^2}{f(v)}\;dv + \varepsilon(\varepsilon+1)\int_{\R^3} \frac{|\nabla \phi(v)|^2}{\phi(v)}\;dv.
\end{align*}
It follows that $i(f_\varepsilon) \to i(f)$. It is obvious that $\int_{\mathbb{R}^3} f_\varepsilon(v)\;dv \to \int_{\mathbb{R}^3} f(v)\;dv$. Lastly,
\begin{align*}
  f_\varepsilon \otimes f_\varepsilon = f\otimes f + \varepsilon (f \otimes \phi + \phi \otimes f) + \varepsilon^2 \phi\otimes \phi.
\end{align*}
From here one can see that $I(f_\varepsilon\otimes f_\varepsilon) \to I(f\otimes f)$ as $\varepsilon \to 0$ in a manner entirely analogous to $i(f_\varepsilon) \to i(f)$, by noting that $f_\varepsilon \otimes f_\varepsilon \geq \max\{f\otimes f,\phi\otimes f\}$ for every $\varepsilon > 0$ and $\nabla_{\mathbb{R}^{6}}(f_\varepsilon \otimes f_\varepsilon) \to \nabla_{\mathbb{R}^3}(f\otimes f)$ pointwise as $\varepsilon \to 0$. 
\end{proof}{

The following is a crucial property of the Fisher information.
\begin{lemma} \label{l:I(F)>2i(pi(F))}
 For any non-negative $C^1$ function $F:\R^{6}\to [0,\infty)$ with $F(v,w)=F(w,v)$, we have
\[ i(\pi F) \leq \frac 12 I(F).\]
\end{lemma}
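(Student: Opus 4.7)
The plan is to apply a Cauchy--Schwarz inequality to the projection operator and then exploit the symmetry $F(v,w)=F(w,v)$ to convert a ``partial'' Fisher information into half of the full one.

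First, I would establish the pointwise bound
\[ |\nabla (\pi F)(v)|^2 \leq (\pi F)(v)\,\int_{\R^3}\frac{|\nabla_v F(v,w)|^2}{F(v,w)}\dd w. \]
Indeed, $\nabla (\pi F)(v)=\int \nabla_v F(v,w)\dd w$, and writing the integrand as $\frac{\nabla_v F}{\sqrt F}\cdot \sqrt F$, the Cauchy--Schwarz inequality gives exactly this estimate (on the set where $F>0$; on the set $\{F=0\}$ we use the convention that $|\nabla_v F|^2/F=0$, as is standard once we approximate by $F+\varepsilon \phi\otimes\phi$ in the manner of the preceding lemma).

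Dividing by $\pi F(v)$ (where it is positive) and integrating in $v$, I obtain
\[ i(\pi F) \;=\; \int_{\R^3}\frac{|\nabla(\pi F)|^2}{\pi F}\dd v \;\leq\; \iint_{\R^6}\frac{|\nabla_v F(v,w)|^2}{F(v,w)}\dd w\dd v. \]
By the symmetry hypothesis $F(v,w)=F(w,v)$, a change of variables gives
\[ \iint_{\R^6}\frac{|\nabla_v F|^2}{F}\dd w\dd v \;=\; \iint_{\R^6}\frac{|\nabla_w F|^2}{F}\dd w\dd v, \]
so averaging the two identical expressions yields $\frac12\iint_{\R^6}\frac{|\nabla_v F|^2+|\nabla_w F|^2}{F}\dd w\dd v=\frac12 I(F)$. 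Combining with the previous inequality gives $i(\pi F)\leq \tfrac12 I(F)$.

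The only subtlety is handling possible vacuum regions of $F$ (and of $\pi F$): the Cauchy--Schwarz step and the division by $\pi F$ must be justified where the density vanishes. The cleanest route is to replace $F$ by $F_\varepsilon := F+\varepsilon\,\phi\otimes\phi$ with $\phi(v)=e^{-|v|^2}$, which is symmetric and strictly positive, carry out the argument without any vacuum issue, and then let $\varepsilon\to 0$ using the same dominated-convergence / monotonicity reasoning as in the proof of Lemma~\ref{l:i(f)=I(fxf)}. I expect this regularization to be the only mildly technical step; the core inequality is just Cauchy--Schwarz plus symmetry.
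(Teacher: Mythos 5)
Your argument is correct and is essentially the paper's own proof: the same Cauchy--Schwarz bound on $|\nabla(\pi F)|^2$ pointwise in $v$, the same use of the symmetry $F(v,w)=F(w,v)$ to identify the $v$-partial Fisher information with $\tfrac12 I(F)$, and the same regularization $F_\varepsilon = F + \varepsilon e^{-|v|^2-|w|^2}$ to handle vacuum regions, with the limit justified as in Lemma~\ref{l:i(f)=I(fxf)}. No gaps.
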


The inequality in Lemma \ref{l:I(F)>2i(pi(F))} was first observed by Eric Carlen in \cite{carlen1991}, without the symmetry assumption. There, he showed that some classical results related to the Fisher information follow as a consequence of this elementary inequality.

\begin{proof}
  First we show inequality under the additional assumption that $F>0$ everywhere. We note that
  \begin{align*}
    \nabla (\pi F)(v) & = \nabla_v \left \{\int_{\mathbb{R}^3}F(v,w)\dd w \right \}\\
	  & = \int_{\mathbb{R}^3}\nabla_v F(v,w)\dd w,
	  \intertext{so }
	|\nabla (\pi F)(v)| & \leq \int_{\mathbb{R}^3}|\nabla_v F(v,w)|\dd w.
  \end{align*}
  Since $F>0$ everywhere, we may apply the Cauchy-Schwarz inequality to get
  \begin{align*}
    \left (\int_{\mathbb{R}^3}|\nabla_v F(v,w)|\dd w \right)^2 \leq \left (\int_{\mathbb{R}^3}\frac{|\nabla_v F(v,w)|^2}{F(v,w)}\dd w\right ) \left ( \int_{\mathbb{R}^3}F(v,w)\dd w\right ).
  \end{align*}
  The last two inequalities combine to
  \begin{align*}
    |\nabla (\pi F)(v)|^2 \leq \left (\int_{\mathbb{R}^3}\frac{|\nabla_v F(v,w)|^2}{F(v,w)}\dd w\right )(\pi F)(v),\;\forall\;v.
  \end{align*}
  Dividing by $(\pi F)(v)$ (which is $>0$ for all $v$ since $F>0$ everywhere) and integrating in $v$, 
  \begin{align*}
    i(\pi F) \leq \iint_{\R^3 \times \mathbb{R}^3}\frac{|\nabla_v F(v,w)|^2}{F(v,w)}\dd w \dd v = \frac{1}{2}I(F).
  \end{align*}
  To obtain the last equality we have used that $F(v,w) = F(w,v)$. 
  
  Now let $F$ be just as in the statement of the lemma, and for every $\varepsilon>0$ define
  \begin{align*}
    F_\varepsilon(v,w) = F(v,w) + \varepsilon e^{-|v|^2-|w|^2}.  	  
  \end{align*}	  
  Then $F_\varepsilon$ also satisfies all the assumptions of the lemma, but in addition $F_\varepsilon>0$ everywhere, and thus
  \begin{align*}
     i(\pi F_\varepsilon) \leq \frac{1}{2}I(F_\varepsilon),\;\forall\;\varepsilon>0.
  \end{align*}
  Observe that $(\pi F_\varepsilon )(v)= (\pi F)(v) + c\varepsilon e^{-|v|^2} $ for some dimensional constant $c$. By repeating the argument in the proof of the previous lemma we conclude that $i(\pi F_\varepsilon) \to i(\pi F)$ and $I(F_\varepsilon) \to I(F)$ as $\varepsilon \to 0^+$, and the lemma follows. 
\end{proof}

Lemma \ref{l:I(F)>2i(pi(F))} is very important to make this program work. It would also hold if we replace the Fisher information with the usual entropy, but it would not hold for most usual quantities related to $f$ that involve derivatives.

\begin{lemma}\label{l:dti=dtI/2}
Let $f$ be a solution of \eqref{e:landauequation} and $F$ as in \eqref{e:liftedevolution}. Then 
\[ \partial_t i(f) \leq \frac 12 \partial_t I(F)_{|t = 0}. \]
\end{lemma}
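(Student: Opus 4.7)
The plan is to use the auxiliary curve $t\mapsto \pi F(t)$ as a bridge between $i(f(t))$ and $\tfrac{1}{2}I(F(t))$, exploiting the fact that Lemmas \ref{l:i(f)=I(fxf)} and \ref{l:I(F)>2i(pi(F))} together force the inequality $i(\pi F)\le \tfrac{1}{2}I(F)$ to be an \emph{equality} at the initial time.

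First I would observe that the symmetry $F(t,v,w)=F(t,w,v)$ is preserved by the linear equation \eqref{e:liftedevolution}, so Lemma \ref{l:I(F)>2i(pi(F))} applies for every $t\ge 0$ and gives
\[ i(\pi F(t)) \le \tfrac{1}{2} I(F(t)), \qquad t\ge 0. \]
At $t=0$, the unit-mass assumption gives $\pi F(0) = \pi(f\otimes f) = f$, and Lemma \ref{l:i(f)=I(fxf)} upgrades this to the equality $i(\pi F(0)) = i(f) = \tfrac{1}{2} I(F(0))$. Consequently the non-negative function
\[ g(t) := \tfrac{1}{2} I(F(t)) - i(\pi F(t)) \]
vanishes at $t=0$ and is non-negative for $t\ge 0$, so (provided it is differentiable there) $g'(0)\ge 0$.

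The second step is to identify $g'(0)$ with the quantity in the statement. By Lemma \ref{l:qwithpi} and the linearity of $\pi$,
\[ \partial_t(\pi F)_{|t=0} = \pi Q(F)_{|t=0} = q(f) = \partial_t f_{|t=0}, \]
while $\pi F(0) = f$. Since the Gâteaux derivative $\langle i'(f),\cdot\rangle$ depends only on the base point and the velocity vector, the two curves $t\mapsto f(t)$ and $t\mapsto \pi F(t)$ produce the same derivative of $i$ at $t=0$, i.e.\ $\partial_t i(\pi F)_{|t=0} = \partial_t i(f)_{|t=0}$. Plugging this into $g'(0)\ge 0$ rearranges exactly to the inequality claimed.

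The only point requiring care is the differentiability of $t\mapsto I(F(t))$ and $t\mapsto i(\pi F(t))$ at $t=0$. For the classical solutions considered here, $F(t)$ is smooth and rapidly decaying in $(v,w)$ together with all its derivatives (as the solution of a linear degenerate-parabolic equation with smooth, well-decaying initial data $f\otimes f$), so Lemma \ref{l:finite-fisher} together with the approximation scheme of Appendix \ref{a:byparts} justifies differentiating under the integral sign. This is the routine technical obstacle; the conceptual content of the lemma is really the one-line observation that the inequality of Lemma \ref{l:I(F)>2i(pi(F))} is saturated at $t=0$ precisely because $F(0)$ is a tensor product.
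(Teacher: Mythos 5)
Your proof is correct and is essentially the paper's own argument: the paper likewise uses Lemma \ref{l:qwithpi} to identify $\partial_t i(f)$ with $\partial_t i(\pi F)_{|t=0}$ and then observes that $i(\pi F)-\tfrac12 I(F)$ (your $-g$) is nonpositive with a maximum at $t=0$ by Lemmas \ref{l:i(f)=I(fxf)} and \ref{l:I(F)>2i(pi(F))}, so its time derivative there has a sign. The only difference is cosmetic (your sign convention for $g$ and the explicit remarks on symmetry propagation and differentiability, which the paper handles in the surrounding text and Appendix \ref{a:byparts}).
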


\begin{proof}
Using Lemma \ref{l:qwithpi}, we get
\begin{align*}
\partial_t i(f) &= \langle i'(f), q(f) \rangle \\
&= \langle i'(f), \partial_t \left[ \pi F \right]_{|t=0} \rangle \\
&= \partial_t i(\pi F)_{|t=0} \\
&= \partial_t \left[ i(\pi F) - \frac 12 I(F) \right]_{|t=0} + \frac 12 \partial_t I(F)_{|t=0}.
\end{align*}
Because of Lemma \ref{l:I(F)>2i(pi(F))}, the first term is the time derivative of a nonpositive function that achieves its maximum at $t=0$. Therefore, it cannot be positive. We end up with
\[ \partial_t i(f) \leq \frac 12 \partial_t I(F)_{|t=0}.\]
\end{proof}

It is convenient to rewrite Lemma \ref{l:dti=dtI/2} in terms of the Gateaux derivatives of $i$ and $I$. It says that for $F = f \otimes f$, we have
\begin{equation}  \label{e:fromitoI}
\langle i'(f), q(f) \rangle \leq \frac 12 \langle I'(F), Q(F) \rangle.
\end{equation}
In order to prove that the left-hand side is negative, we will show that the right hand side is. The remarkable advantage of our analysis is that the question of monotonicity of the Fisher information for the Landau equation, which is nonlinear and nonlocal, is now reduced to its monotonicity for a linear equation with explicit coefficients, in the doubled-variables. Precisely, we will prove Theorem \ref{t:main} as soon as we prove the following inequality for any function $F : \R^6 \to [0,\infty)$ so that $F(v,w) = F(w,v)$.
\begin{equation} \label{e:question}
\langle I'(F), Q(F) \rangle \leq 0.
\end{equation}

Proving \eqref{e:question} is the objective of the coming sections. We want to apply it to solutions of the Landau equation. From Theorem \ref{t:henderson-snelson-tarfulea}, these solutions are strictly positive, smooth and rapidly decaying functions. We must show that the inequality \eqref{e:question} holds for any smooth function $F :\R^6 \to (0,\infty)$ with sufficient decay at infinity. There is no time evolution explicitly involved in the inequality \eqref{e:question}.

\begin{remark}
The inequality \eqref{e:fromitoI} is in fact an equality. There is a very elementary justification that we explain here. Consider the function $F(t,v,w)$ given by
\[ F(t,v,w) = f(v) f(w) + t Q(f \otimes f).\]
Suppose that $F(t,v,w) \geq 0$ at least in some small interval of time $t \in (-\delta,\delta)$. Otherwise, we may approximate it with a nonnegative function appropriately.

Let $\iota(t)$ be the function
\[ \iota(t) := i(\pi F(t,\cdot)) - \frac 12 I(F(t,\cdot)).\]
Since $F(0,v,w) = f(v)f(w)$, Lemma \ref{l:i(f)=I(fxf)} tells us that  $\iota(0) = 0$. Lemma \ref{l:I(F)>2i(pi(F))} tells us that $\iota(t) \leq 0$ for all $t \in (-\delta,\delta)$. Therefore $\iota(t)$ achieves its maximum at $t=0$ and we deduce that $\iota'(0) = 0$.

If is also possible, although it looks rather magical, to justify the equality by writing $\langle i'(f),q(f) \rangle$ as a double integral and symmetrizing the expression in the correct way.

Since \eqref{e:fromitoI} is in fact an equality, the inequality \eqref{e:question} turns out to be equivalent to the statement of Theorem \ref{t:main}. Our analysis so far is sharp.
\end{remark}

\section{Flowing along vector fields}
\label{s:vectorsb}

In this section we decompose the lifted Landau operator \eqref{e:Q} as a sum of second order differential operators each acting along one direction in $\R^6$. The vector fields $\tilde b_k$ giving these directions correspond to the generators of rotations along three perpendicular axes in $\R^3$ (which are then ``lifted'' to $\R^6$).  We write the majority of the computations in this paper using the vectors $b_k$. Because of this, it is important to get used to their basic arithmetic properties which we describe in this section.

First, for each value of $v-w$, we define the following three vectors
\begin{equation} \label{e:bk}
\begin{aligned}
b_1(v-w) = \begin{pmatrix} 0 \\ w_3 - v_3 \\ v_2 - w_2 \end{pmatrix}, \\
b_2(v-w) = \begin{pmatrix} v_3-w_3 \\ 0 \\ w_1-v_1 \end{pmatrix}, \\
b_3(v-w) = \begin{pmatrix} w_2-v_2 \\ v_1-w_1 \\ 0 \end{pmatrix}.
\end{aligned}
\end{equation}
One can check that the vector fields $b_1$, $b_2$ and $b_3$ are always perpendicular to $(v-w)$. In fact, for each fixed value of $v-w$ with $v\neq w$ these three vectors span the plane in $\mathbb{R}^3$ perpendicular to $v-w$. The following identity, that is readily verified by a direct computation, links these vectors with the Landau equation.
\begin{equation} \label{e:decompositionofaij} 
b_1 \otimes b_1 + b_2 \otimes b_2 + b_3 \otimes b_3 = a_{ij}(v-w),
\end{equation}
where $a_{ij}(z) = |z|^2 \delta_{ij} - z_i z_j$ is the matrix used in the definition of the Landau operator \eqref{e:landauoperator}. As a consequence of these two observations, the lifted operator $Q$ \eqref{e:Q} can be rewritten in the following way
\begin{align*}
Q(F) & = \sum_{k=1,2,3} (\dv_v- \dv_w)(\alpha(|v-w|)b_k\otimes b_k (\nabla_v - \nabla_w) F)\\
  & = \sum_{k=1,2,3} (\partial_{v_i}-\partial_{w_i}) \left( \alpha(|v-w|) (b_k)_i \ (b_k)_j \cdot (\partial_{v_j}-\partial_{w_j}) F \right).
\end{align*}

\begin{remark}\label{r:orthogonal projection identity}
Denote by $\Pi(z)$ ($z\neq 0$) the orthogonal projection from $\R^3$ to the plane perpendicular to $z$. It is elementary that $\Pi_{ij}(z) = \delta_{ij}- |z|^{-2}z_iz_j$ and so we see that $a_{ij}(z) = |z|^2\Pi(z)$. Accordingly the formula relating $a_{ij}$ and the vector fields $b_k$'s gives a corresponding one for $\Pi(z)$ which we record here
\begin{align*}	
\Pi_{ij}(z) = |z|^{-2}\sum \limits_{k=1,2,3}(b_k(z))_i(b_k(z))_j.
\end{align*}	
\end{remark}

Since the differential operators $b_k \cdot (\nabla_v - \nabla_w)$ act on functions in $\R^6$ it is both convenient and natural to write it in terms of vectors in $\R^6$. For $k=1,2,3$, we define corresponding vector fields $\tilde b_k$ in $\R^6$.
\begin{equation} \label{e:tildebk}
\begin{aligned}
\tilde b_k(v-w) = \begin{pmatrix} b_k \\ -b_k \end{pmatrix}.
\end{aligned}
\end{equation}
We abuse notation by writing $\tilde b_k : \R^6 \to \R^6$ to denote $\tilde b_k = \tilde b_k(v-w)$. The vector fields $\tilde b_1$, $\tilde b_2$ and $\tilde b_3$ are divergence-free in $\R^6$. They satisfy the following matrix identity,
\begin{equation} \label{e:lifteddecompositionofaij} 
\tilde b_1 \otimes \tilde b_1 + \tilde b_2 \otimes \tilde b_2 + \tilde b_3 \otimes \tilde b_3 = \begin{pmatrix} \{a_{ij}(v-w)\} & -\{a_{ij}(v-w)\} \\ -\{a_{ij}(v-w)\} & \{a_{ij}(v-w)\}\end{pmatrix}.
\end{equation}
Moreover, $\tilde b_k(v-w)$ is always perpendicular to $\begin{pmatrix} v-w \\ w-v \end{pmatrix}$. In particular, for any function $\beta = \beta(|v-w|)$, we always have (for the gradient and divergence in $\R^6$)
\begin{align*}
  \tilde b_k \cdot \nabla \beta = 0 \text{ and } \dv(\beta(|v-w|) \tilde b_k)=0.
\end{align*}
In light of all the above we can write $Q$ in terms of these vector fields $\tilde b_k$.
\[ \begin{aligned} 
Q(F) &= \sum_{k=1,2,3} \dv \left( \alpha(|v-w|)\left (\tilde b_k \cdot \nabla F\right ) \ \tilde b_k  \right) \\
&= \sum_{k=1,2,3} \alpha(|v-w|)\left (\tilde b_k \cdot \nabla \left( \ \tilde b_k \cdot \nabla F \right) \right ) \\
&= \sum_{k=1,2,3} \sqrt \alpha \ \tilde b_k \cdot \nabla \left( \sqrt \alpha \ \tilde b_k \cdot \nabla F \right).
\end{aligned}
\]
In this last formula, $\nabla F$ stands for the gradient of $F$ with respect to the variable $(v,w) \in \R^6$. We think of the operator $Q(F)$ as the sum of the second derivatives of $F$ along the directions $\tilde b_k$, with a weight $\alpha(|v-w|)$. To make this point clearer while also introducing additional notation we will use later, let us define, for a given vector field $b$ in $\R^6$, the operator
\begin{align*}
  L_b(F):= b \cdot \nabla F.
\end{align*}
Then $L_b(F)$ is simply the derivative of $F$ along $b$. One sees that the above decomposition for $Q$ can be restated as a sum of squares of operators of the form $L_{b}$, concretely
\begin{align}\label{e:Q_as_sum_of_squares_of_Lbk}
  Q = \sum \limits_{k=1,2,3} L_{\sqrt \alpha \tilde b_k}\circ L_{\sqrt \alpha \tilde b_k} = \sum \limits_{k=1,2,3}\alpha L_{\tilde b_k}\circ L_{\tilde b_k}.
\end{align}
It is interesting to notice that if we consider the flow of each vector field $\tilde b_k$,
\[ \begin{pmatrix} v'(t) \\ w'(t) \end{pmatrix} = \tilde b_k(v(t)-w(t)),\]
then $v(t)+w(t)$, $|v(t)|^2+|w(t)|^2$ and $|v(t)-w(t)|$ are all constant in $t$. This is simply the fact that $\tilde b$ is everywhere perpendicular to the gradient of $|v|^2+|w|^2$, the gradient of $|v-w|$, and to vectors of the form $(e,e) \in \R^6$ with $e \in \R^3$.

Given any point $(v,w) \in \R^6$, the set of points in $\R^6$ that we may reach by flowing along the three vector fields $b_k$'s, is exactly the sphere used in the usual formulas for the Boltzmann equation.
\begin{equation} \label{e:sphere}
 \sphere(v,w) := \left\{ (v',w') \in \R^6 : v'+w' = v+w \text{ and } |v'|^2 + |w'|^2 = |v|^2 + |w|^2\right\}.
\end{equation}
Based on this intuition, there are some quantities that we define later on taking into account derivatives along the directions $\tilde b_k$ for which we use names referring to this sphere. The set $\sphere(v,w)$ is indeed a two-dimensional sphere of radius $|v-w|/2$ isometrically embedded in $\R^6$. It is the same sphere used in the usual parametrization of the Boltzmann equation: 
\[ \sphere(v,w) = \left\{ (v',w'): v' = \frac{v+w}2 + \frac{|v-w|}2 \sigma, \ w' = \frac{v+w}2 - \frac{|v-w|}2 \sigma, \ \sigma \in S^2 \right\}.\]
At any given point $(v,w) \in \R^6$, the vectors $\tilde b_1(v-w)$, $\tilde b_2(v-w)$ and $\tilde b_3(v-w)$ are tangent to $\sphere(v,w)$. These vectors are necessarily linearly dependent. We can easily verify this fact by hand since $(v_1-w_1)\tilde b_1 + (v_2-w_2)\tilde b_2 + (v_3-w_3)\tilde b_3 = 0$.

Using the vector fields $\tilde b_i's$, we will eventually verify that the operator $Q$ is exactly the Laplace-Beltrami operator on $\sphere(v,w)$ times the function $|v-w|^2 \alpha(|v-w|)$. See Remark \ref{r:sphere}.

There is one last vector field in $\R^6$ that will play an important role in our analysis. Let $\n: \R^6 \to \R^6$ be the unit normal vector to the level sets of $|v-w|$. That is
\begin{equation} \label{e:n}
\n = \frac 1 {\sqrt{2} |v-w|} \begin{pmatrix} v-w \\ w-v \end{pmatrix}.
\end{equation}
We can see that for $k=1,2,3$, $[\n,\tilde b_k] = 0$. For the reader's convenience let us recall the definition of the Lie bracket $[a,b]$, this is the vector whose components are given by
\[ [a,b]_i = a_j \partial_j b_i - b_j \partial_j a_i.\]
The Lie bracket computes the error of commuting the differentiation with respect to $a$ and $b$,
\begin{equation} \label{e:liebracket}
 a \cdot \nabla (b \cdot \nabla u) - b \cdot \nabla (a \cdot \nabla u) = [a,b] \cdot \nabla u
\end{equation}
In contrast to the vectors $\tilde b_k$, in general we have $\n \cdot \nabla \alpha \neq 0$ whenever $ \alpha = \alpha(|v-w|)$. Moreover, we have
\[ [\n,\alpha \tilde b_k] = \left( \n \cdot \nabla \alpha \right) \ \tilde b_k = \sqrt{2} \alpha'(|v-w|) \, \tilde b_k. \]
For the last identity, note that
\begin{align*}
\n \cdot \nabla \alpha &= \n \cdot \frac {\alpha'(|v-w|)} {|v-w|} \begin{pmatrix} v-w \\ w-v \end{pmatrix} \\
 &= \frac {1} {\sqrt 2|v-w|} \begin{pmatrix} v-w \\ w-v \end{pmatrix} \cdot \frac {\alpha'(|v-w|)} {|v-w|} \begin{pmatrix} v-w \\ w-v \end{pmatrix} \\
 &= \frac{2 |v-w|^2}{\sqrt{2} |v-w|^2 } \alpha'(|v-w|) \\
 &= \sqrt{2} \alpha'(|v-w|).
\end{align*}

We will also be using vector fields to decompose the Fisher information, and we record here a definition and two preparatory lemmas that will be used later. Given a vector field $e:\mathbb{R}^{6}\to\mathbb{R}^{6}$ we define the functional
\begin{align*}
  I_e(F):= \iint_{\R^6} \frac{(e\cdot \nabla F)^2}F\dd w \dd v.
\end{align*}
One can see $I_e(F)$ as a kind of Fisher information\footnote{in fact, $I_e(F)$ is the standard Fisher information (as understood in statistics) at $\theta=0$ for the one-parameter distribution $F_\theta$ obtained by transporting the distribution $F$ along the flow given by $e$.} of $F$ that only uses $\nabla F$ in the direction $e$.  The first of the preparatory lemmas provides a formula for the Gateaux derivative of $I_e$ at $F$ (for some vector field $e$) in the direction given by $L_b(F)$ (for some divergence-free vector field $b$).
\begin{lemma}\label{l:Derivative_Ie_along_Lb}
Let $e$ and $b$ be vector fields in $\R^6$ and assume $b$ satisfies $\dv(b)=0$, then the following identity holds for any smooth positive function $F:\R^6 \to (0,\infty)$ with rapid decay at infinity.
\begin{align*}
\langle I_e'(F) , L_b(F) \rangle & = 2\iint_{\R^6} (e\cdot \nabla \log F)([e,b]\cdot \nabla \log F)F\dd w\dd v. 
\end{align*}

\end{lemma}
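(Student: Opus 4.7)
The plan is to compute $\langle I_e'(F), L_b F\rangle$ by direct expansion, then use the commutator identity \eqref{e:liebracket} together with a single integration by parts that exploits $\dv b = 0$. The entire computation should fit in a few lines once the terms are organized correctly.

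First I would differentiate the functional $I_e$ in a generic direction $G$. A direct computation from the formula $I_e(F) = \iint_{\R^6}(e\cdot\nabla F)^2/F \dd w \dd v$ yields
\[ \langle I_e'(F), G\rangle = \iint_{\R^6}\left[\frac{2(e\cdot\nabla F)(e\cdot\nabla G)}{F} - \frac{(e\cdot\nabla F)^2 G}{F^2}\right] \dd w \dd v. \]
Substituting $G = L_b F = b\cdot\nabla F$ and applying \eqref{e:liebracket} in the form $e\cdot\nabla(b\cdot\nabla F) = b\cdot\nabla(e\cdot\nabla F) + [e,b]\cdot\nabla F$, I would split the first integrand into two pieces. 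One piece, containing $[e,b]\cdot\nabla F$, already matches the right hand side of the lemma after writing $(e\cdot\nabla F)/F = e\cdot\nabla\log F$ and similarly for $[e,b]\cdot\nabla F$. The remaining piece can be rewritten as
\[ \iint_{\R^6}\frac{2(e\cdot\nabla F)(b\cdot\nabla(e\cdot\nabla F))}{F}\dd w \dd v = \iint_{\R^6}\frac{b\cdot\nabla((e\cdot\nabla F)^2)}{F}\dd w \dd v. \]

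The crucial step is to observe that this last integral cancels the reaction term $-\iint (e\cdot\nabla F)^2(b\cdot\nabla F)/F^2 \dd w \dd v$ from the Gateaux derivative. Indeed, since $\dv b = 0$, for any sufficiently decaying scalar $g$ we have
\[ \iint_{\R^6}\frac{b\cdot\nabla g}{F}\dd w \dd v = -\iint_{\R^6} g\, \dv\!\left(\frac{b}{F}\right)\dd w \dd v = \iint_{\R^6}\frac{g(b\cdot\nabla F)}{F^2}\dd w \dd v, \]
and applying this with $g = (e\cdot\nabla F)^2$ produces exactly the cancellation. The only surviving contribution is $2\iint (e\cdot\nabla F)([e,b]\cdot\nabla F)/F \dd w \dd v$, which equals the stated right hand side.

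The main obstacle is essentially one of bookkeeping — ensuring that the boundary terms in the integration by parts vanish under the regularity and decay assumed for $F$, which is handled by the conventions and approximation argument described in Appendix \ref{a:byparts}. A conceptually cleaner alternative route is to recognize that the flow $\Phi_t$ of $b$ is measure-preserving because $\dv b = 0$, so $I_e(F\circ\Phi_t) = I_{(\Phi_t)_* e}(F)$; differentiating at $t=0$ and invoking the classical identity $\partial_t((\Phi_t)_* e)|_{t=0} = [e,b]$ yields the lemma in one line, and conceptually explains why the right hand side is governed by the Lie bracket.
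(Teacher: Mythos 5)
Your proposal is correct and follows essentially the same route as the paper: direct computation of the Gateaux derivative, the commutator identity $e\cdot\nabla(b\cdot\nabla F)=b\cdot\nabla(e\cdot\nabla F)+[e,b]\cdot\nabla F$, and the use of $\dv b=0$ to eliminate the remaining terms (the paper recognizes them as the exact divergence $\dv\bigl(\tfrac{(e\cdot\nabla F)^2}{F}\,b\bigr)$, while you cancel them by one explicit integration by parts — the same step). Your closing remark about the flow of $b$ and the Lie derivative is a valid conceptual alternative, but it is only a sketch and not needed.
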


\begin{proof}
We compute directly, 
\begin{align*}
\langle I_e'(F) , L_b(F) \rangle = \iint_{\R^6} 2\frac{(e\cdot \nabla F)(e\cdot \nabla (b\cdot \nabla F))}F - \frac{(e\cdot \nabla F)^2}{F^2} (b\cdot \nabla F) \dd w \dd v.
\end{align*}
Observe that
\begin{align*}   
  e\cdot \nabla (b\cdot \nabla F) = b\cdot \nabla (e\cdot \nabla F) + [e,b]\cdot \nabla F.
\end{align*}
This results in
\begin{align*}
\langle I_e'(F) , L_b(F) \rangle = & \iint_{\R^6} 2\frac{(e\cdot \nabla F)(b\cdot \nabla (e\cdot \nabla F))}F - \frac{(e\cdot \nabla F)^2}{F^2} (b\cdot \nabla F) + 2\frac{(e\cdot \nabla F)([e,b]\cdot \nabla F)} F\dd w \dd v.
\end{align*}
The first two terms in the integrand add up to a divergence since
\begin{align*} 
2\frac{(e\cdot \nabla F)(b\cdot \nabla (e\cdot \nabla F))}F - \frac{(e\cdot \nabla F)^2}{F^2} (b\cdot \nabla F) &= b \cdot \nabla \frac{(e \cdot \nabla F)^2}F  \\
&= \dv \left( \frac{(e \cdot \nabla F)^2}F \ b \right).
\end{align*}  
It follows that
\begin{align*}
\langle I_e'(F) , L_b(F) \rangle = 2\iint_{\R^6} \frac{(e\cdot \nabla F)([e,b]\cdot \nabla F)} F \dd w \dd v.
\end{align*}  
and the lemma now follows using that $\nabla F = F\nabla \log F$.  
\end{proof}

The second preparatory lemma is the corresponding pure second derivative for $I_e(F)$.
\begin{lemma}\label{l:SecondDerivative_Ie_along_Lb}
Let $e$ and $b$ be vector fields in $\R^6$, then the following identity holds for any smooth positive function $F:\R^6 \to (0,\infty)$ with rapid decay at infinity.
\begin{align*}
\langle I_e''(F) L_b(F) , L_b(F) \rangle & = 2\iint_{\R^6} (e\cdot \nabla (b \cdot \nabla \log F))^2 F\dd w\dd v. 
\end{align*}
\end{lemma}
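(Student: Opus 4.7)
The plan is a direct computation of the second variation. Pick any smooth curve $F(t)$ with $F(0)=F$, $\partial_t F(0) = g := L_b(F) = b\cdot\nabla F$, and arbitrary $\partial_{tt} F(0) = h$. By the definition given in the preliminaries, $\langle I_e''(F) g,g\rangle = \partial_{tt} I_e(F(t))_{|t=0} - \langle I_e'(F), h\rangle$. Setting $A(t) := e\cdot\nabla F(t)$ and $B(t) := F(t)$, we have pointwise $I_e(F(t)) = \iint A^2/B$. Differentiating twice under the integral gives
\[
\partial_{tt}\!\left(\frac{A^2}{B}\right) = \frac{2(A')^2}{B} + \frac{2AA''}{B} - \frac{4AA'B'}{B^2} - \frac{A^2 B''}{B^2} + \frac{2A^2(B')^2}{B^3}.
\]
The terms involving $A''$ and $B''$ constitute precisely the pointwise integrand of $\langle I_e'(F),h\rangle$ (after noting $A''(0)=e\cdot\nabla h$, $B''(0)=h$), so subtracting them we arrive at the $h$-free identity
\[
\langle I_e''(F) g,g\rangle = \iint_{\R^6} \frac{2}{F}\left[ e\cdot\nabla g - \frac{(e\cdot\nabla F)\, g}{F}\right]^2 \dd w\, \dd v.
\]

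The next step is to recognize the bracket as $F$ times a second directional derivative of $\log F$. Since $b\cdot\nabla\log F = (b\cdot\nabla F)/F = g/F$, a direct application of the quotient rule yields
\[
e\cdot\nabla(b\cdot\nabla\log F) = \frac{e\cdot\nabla g}{F} - \frac{(e\cdot\nabla F)(b\cdot\nabla F)}{F^{2}},
\]
i.e.\ the bracketed expression above equals $F\, e\cdot\nabla(b\cdot\nabla\log F)$. Substituting this into the previous display produces
\[
\langle I_e''(F) g,g\rangle = 2\iint_{\R^6} \bigl(e\cdot\nabla(b\cdot\nabla\log F)\bigr)^2 F \dd w\, \dd v,
\]
which is the claimed identity.

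The only step needing any care is the justification of differentiation under the integral and, later in the paper, the use of this formula on functions with possibly poor decay of the high derivatives; but for the class of smooth, strictly positive, rapidly decaying $F$ assumed in the statement, this is routine and is handled once and for all by the density argument of Appendix \ref{a:byparts}. There is no genuine obstacle.
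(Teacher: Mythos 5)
Your proposal is correct and takes essentially the same approach as the paper: a direct computation of the second variation (the paper differentiates the explicit formula for $\langle I_e'(F),G\rangle$ once more, which is exactly your curve computation with the $\langle I_e'(F),h\rangle$ term removed), followed by completing the square and recognizing the bracketed expression as $F\, e\cdot\nabla(b\cdot\nabla\log F)$. There is no gap.
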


\begin{proof}
For any function $G : \R^6 \to \R$, we see that
\[ \langle I_e'(F), G \rangle = \iint_{\R^6} 2 \frac{(e \cdot \nabla F) (e \cdot \nabla G)}F - \frac{(e \cdot \nabla F)^2}{F^2} G \dd w \dd v.\]
Differentiating again, we get
\begin{align*}
\langle I''(F)G, G \rangle &= \iint_{\R^6} 2 \frac{(e \cdot \nabla G)^2}F - 4 \frac{(e \cdot \nabla F)(e \cdot \nabla G)}{F^2}G + 2  \frac{(e \cdot \nabla F)^2}{F^3} G^2 \dd w \dd v \\
&= \iint_{\R^6} 2 F \left( \frac{e \cdot \nabla G}F - G \frac{(e \cdot \nabla F)}{F^2}\right)^2 \dd w \dd v.
\end{align*}
In particular, if $G = L_b(F) = b \cdot \nabla F$, it reduces to
\begin{align*}
\langle I''(F) L_b(F), L_b(F) \rangle &= \iint_{\R^6} 2 F \left( \frac{e\cdot \nabla (b \cdot \nabla F)}F - \frac{(b\cdot \nabla F)} F \frac{(e \cdot \nabla F)}F\right)^2 \dd w \dd v \\
&= \iint_{\R^6} 2 F \left| e \cdot \nabla \left(b \cdot \frac{\nabla F} F \right) \right|^2 \dd w \dd v.
\end{align*}

\end{proof}
}

\section{The case of Maxwell molecules}
\label{s:mm}

In the case $\alpha(|v-w|) \equiv 1$ the monotonicity of the Fisher information was obtained in \cite{villani2000fisherlandau}, and also follows from the earlier work for the Boltzmann equation in \cite{toscani1992,villani1998fisherboltzmann}. Here, we provide a quick proof by verifying \eqref{e:question} in that case. It also serves to explain some of the ideas that will be used later for more general interaction potentials $\alpha$. Let $\tilde b_1$, $\tilde b_2$ and $\tilde b_3$ be the vector fields as in \eqref{e:tildebk}. We use the formula \eqref{e:Q_as_sum_of_squares_of_Lbk} for $\alpha \equiv 1$.
\[ Q(F) = \sum_{k=1,2,3} L_{\tilde b_k} \circ L_{\tilde b_k} (F). \]
We see that when $\alpha \equiv 1$ the operator $Q(F)$ is a sum of squares of differential operators given by vector fields which are generators of certain rotations in $\R^6$. Since the Fisher information is invariant under rotations, it follows that $I(F)$ is unchanged if one flows $F$ by $L_{\tilde b_k}F$. 
\begin{lemma} \label{l:flowofI}
For any smooth positive function $F:\R^6 \to (0,\infty)$ with rapid decay at infinity we have
\[ \langle I'(F) , L_{\tilde b_k}(F) \rangle = 0. \]
\end{lemma}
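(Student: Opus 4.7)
The plan is to exploit the fact that $\tilde b_k$ generates a one-parameter group of isometries (specifically, rotations) of $\R^6$. Since the Fisher information is invariant under isometries, its infinitesimal variation along such a flow must vanish. I would make this precise by a direct computation, using Lemma \ref{l:Derivative_Ie_along_Lb} together with the antisymmetry of the Jacobian of $\tilde b_k$.

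First I would decompose $I(F) = \sum_{p=1}^{6} I_{e_p}(F)$, where $e_1,\ldots,e_6$ is the standard basis of $\R^6$. The vector field $\tilde b_k$ is divergence-free, as noted in Section \ref{s:vectorsb}, so Lemma \ref{l:Derivative_Ie_along_Lb} applies to each term, and after summing it yields
\[ \langle I'(F), L_{\tilde b_k}(F) \rangle = 2 \iint_{\R^6} \sum_{p=1}^{6} (e_p \cdot \nabla \log F)\,([e_p, \tilde b_k] \cdot \nabla \log F)\, F \dd w \dd v. \]
Because $e_p$ has constant components, $[e_p, \tilde b_k]_i = \partial_p (\tilde b_k)_i$. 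If $M$ denotes the $6 \times 6$ Jacobian of $\tilde b_k$, i.e. $M_{ij} = \partial_j (\tilde b_k)_i$, the pointwise integrand is exactly $(\nabla \log F)^T M (\nabla \log F) \, F$.

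The crux is the antisymmetry of $M$. From $\tilde b_k(v,w) = (b_k(v-w), -b_k(v-w))$ and the explicit formula \eqref{e:bk}, a direct computation gives
\[ M = \begin{pmatrix} J & -J \\ -J & J \end{pmatrix}, \]
where $J$ is the $3 \times 3$ Jacobian of $b_k$ with respect to its argument. Since $b_k$ is the infinitesimal generator of rotations about one coordinate axis in $\R^3$, $J$ is antisymmetric, and therefore so is $M$. Any antisymmetric quadratic form annihilates every vector, so the integrand vanishes pointwise and the integral is zero.

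There is no genuine obstacle here; the point of the apparatus built in Section \ref{s:vectorsb} is precisely that this identity becomes a clean pointwise statement about an antisymmetric matrix. The same antisymmetry is also what forces the flow of $\tilde b_k$ to be an isometry of $\R^6$, which is the geometric reason behind the identity.
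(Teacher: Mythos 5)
Your proof is correct and rests on exactly the same two facts as the paper's own argument: the antisymmetry of the Jacobian $D\tilde b_k$ and the divergence-free property of $\tilde b_k$ (the latter entering through Lemma \ref{l:Derivative_Ie_along_Lb}, which performs the integration by parts that the paper's proof carries out inline). The only difference is packaging: you decompose $I(F)=\sum_{p} I_{e_p}(F)$ over the canonical basis and invoke Lemma \ref{l:Derivative_Ie_along_Lb} -- the same pattern the paper itself uses for Lemma \ref{l:D2I} and for $I_{par}$ in Proposition \ref{p:In} -- whereas the paper's proof of this lemma differentiates the Fisher information integral directly.
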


\begin{proof}
The result already follows by the rotational invariance, but we also verify directly by differentiating the integral. We proceed by direct computation. 
\begin{align*}
\langle I'(F) , L_{\tilde b_k}(F) \rangle &= \iint_{\R^6} 2 \frac{\nabla F \cdot \nabla L_{\tilde b_k}(F)}F - \frac{|\nabla F|^2}{F^2} L_{\tilde b_k}(F) \dd w \dd v \\
&= \iint_{\R^6} \frac{\tilde b_k \cdot \nabla |\nabla F|^2 + 2 \langle (D\tilde b_k) \nabla F, \nabla F \rangle }F - \frac{|\nabla F|^2}{F^2} L_{\tilde b_k}(F) \dd w \dd v \\
\intertext{Observe that $D\tilde b_k$ is antisymmetric for $k=1,2,3$, thus $\langle (D \tilde b_k) \nabla F, \nabla F \rangle \equiv 0$. }
&= \iint_{\R^6} \frac{\tilde b_k \cdot \nabla |\nabla F|^2}F - \frac{|\nabla F|^2}{F^2} \tilde b_k \cdot \nabla F \dd w \dd v \\
&= \iint_{\R^6} \tilde b_k \cdot \nabla \frac{|\nabla F|^2}F \dd w \dd v = 0.
\end{align*}
\end{proof}

Motivated by Lemma \ref{l:flowofI}, we study the first order transport equation where we flow $F$ by $L_{\tilde b_k}$. For any function $F_0 = F_0(v,w)$, let us consider the initial value problem
\begin{equation} \label{e:transport}
\begin{aligned}
F^k(0,v,w) &= F_0(v,w), \\
F_t^k(t,v,w) &= L_{\tilde b_k}(F).
\end{aligned}
\end{equation}

\begin{lemma} \label{l:Iconstantonflow}
Let $F^k$ be as in \eqref{e:transport}, then $I(F)$ is constant in $t$.
\end{lemma}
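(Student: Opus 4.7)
The plan is to read the conclusion directly from Lemma \ref{l:flowofI} via the chain rule. By the definition of the Gateaux derivative of $I$, and since $\partial_t F^k = L_{\tilde b_k}(F^k)$ by \eqref{e:transport},
\[ \frac{d}{dt} I(F^k(t,\cdot)) = \langle I'(F^k(t,\cdot)),\, L_{\tilde b_k}(F^k(t,\cdot)) \rangle = 0, \]
where the vanishing is exactly Lemma \ref{l:flowofI}. Integrating in $t$ yields $I(F^k(t,\cdot)) = I(F_0)$ for every $t$, which is the claim.

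To make this rigorous, one must verify that $F^k(t,\cdot)$ remains smooth, strictly positive, and rapidly decaying at infinity, so that Lemma \ref{l:flowofI} applies at each time. This is elementary: \eqref{e:transport} is a linear transport equation with smooth, divergence-free vector field $\tilde b_k(v-w)$, so its unique solution is $F^k(t,v,w) = F_0(\Phi^k_{-t}(v,w))$, where $\Phi^k_t$ denotes the flow of $\tilde b_k$. As observed in Section \ref{s:vectorsb}, this flow preserves $v+w$, $|v|^2+|w|^2$, and $|v-w|$; in particular, $\Phi^k_t$ is an isometry of $\R^6$, namely the rotation about the $e_k$-axis applied to the $v-w$ component while $v+w$ is kept fixed. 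Consequently smoothness, strict positivity, and any decay rate of $F_0$ are transferred to $F^k(t,\cdot)$.

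The main (and essentially only) obstacle is this regularity verification; once it is in place the result is immediate. In fact, the isometry structure of $\Phi^k_t$ yields a second, even shorter proof that one could include as a remark: the Fisher information \eqref{e:fisherlifted} depends on $F$ only through the Lebesgue measure and the Euclidean norm of $\nabla F$, both of which are preserved under pullback by isometries of $\R^6$, so $I(F_0 \circ \Phi^k_{-t}) = I(F_0)$ without any computation. From this point of view, Lemma \ref{l:flowofI} is simply the infinitesimal form of the invariance of $I$ under rigid motions, and the present lemma is its integrated version.
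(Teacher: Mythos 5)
Your proof is correct and takes essentially the same route as the paper, whose proof of this lemma is simply to invoke Lemma \ref{l:flowofI} together with the chain rule for the Gateaux derivative. Your added observations (that the flow of $\tilde b_k$ is by isometries of $\R^6$, which both preserves the regularity needed to apply Lemma \ref{l:flowofI} and gives a direct invariance proof) match the remarks the paper itself makes in Section \ref{s:mm} about rotational invariance of the Fisher information.
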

\begin{proof}
It follows from Lemma \eqref{l:flowofI}.
\end{proof}

\begin{lemma} \label{l:Qassecondderivative}
For any smooth positive function $F_0:\R^6 \to (0,\infty)$ with rapid decay at infinity, if $F^k$ solve the problems \eqref{e:transport} for $k=1,2,3$, we have
\[ Q(F_0)(v,w) = \sum_{k=1}^3 \partial_{tt} F^k(0,v,w) .\]
\end{lemma}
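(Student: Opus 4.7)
The plan is very direct. The key observation is that the vector fields $\tilde b_k(v-w)$ depend only on the spatial variables $(v,w)$, not on $t$, so they commute with $\partial_t$. Since each $F^k$ satisfies the transport equation $\partial_t F^k = L_{\tilde b_k}(F^k) = \tilde b_k \cdot \nabla F^k$, I differentiate both sides in $t$ to obtain
\[ \partial_{tt} F^k = \tilde b_k \cdot \nabla(\partial_t F^k) = \tilde b_k \cdot \nabla\bigl(\tilde b_k \cdot \nabla F^k\bigr) = L_{\tilde b_k}\circ L_{\tilde b_k}(F^k). \]
Evaluating at $t=0$, where $F^k(0,v,w) = F_0(v,w)$, gives
\[ \partial_{tt} F^k(0,v,w) = L_{\tilde b_k}\circ L_{\tilde b_k}(F_0)(v,w). \]

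Summing over $k=1,2,3$ and applying the decomposition \eqref{e:Q_as_sum_of_squares_of_Lbk} in the Maxwell-molecules case $\alpha \equiv 1$, we conclude
\[ \sum_{k=1}^3 \partial_{tt} F^k(0,v,w) = \sum_{k=1}^3 L_{\tilde b_k}\circ L_{\tilde b_k}(F_0)(v,w) = Q(F_0)(v,w). \]

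There is no real obstacle here; the argument is essentially a bookkeeping exercise combining the explicit definition of the flow \eqref{e:transport} with the sum-of-squares representation \eqref{e:Q_as_sum_of_squares_of_Lbk}. The only thing to verify carefully is that the coefficients of $\tilde b_k$ do not depend on $t$, which is immediate from the definition \eqref{e:tildebk}. The smoothness and decay hypotheses on $F_0$ ensure $F^k$ is classical in a neighborhood of $t=0$ so that the two time derivatives are justified pointwise.
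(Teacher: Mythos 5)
Your proof is correct and is essentially the paper's argument: the paper simply cites the sum-of-squares decomposition \eqref{e:Q_as_sum_of_squares_of_Lbk} with $\alpha \equiv 1$, and your explicit step of differentiating the transport equation \eqref{e:transport} in time (using that $\tilde b_k$ is time-independent) is just the spelled-out version of that same observation.
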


\begin{proof}
This is simply the fact that $Q = \sum_k L_{\tilde b_k} \circ L_{\tilde b_k}$ as in \eqref{e:Q_as_sum_of_squares_of_Lbk}.
\end{proof}

The following result was proved by Villani in \cite{villani2000fisherlandau}. Using our current framework, we are able to provide a short proof.

\begin{prop} \label{p:mm}
The Fisher information is monotone decreasing along the flow of the Landau equation \eqref{e:landauequation} when $\alpha \equiv 1$.
\end{prop}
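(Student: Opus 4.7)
The plan is to combine the reduction in \eqref{e:fromitoI} with the observation that in the Maxwell molecules case, $Q$ is a sum of squares of vector fields along which $I$ is constant. Specifically, by Lemma \ref{l:dti=dtI/2}, it suffices to show $\langle I'(F), Q(F) \rangle \leq 0$ for $F = f \otimes f$. By the decomposition \eqref{e:Q_as_sum_of_squares_of_Lbk} with $\alpha \equiv 1$, we have
\[ \langle I'(F), Q(F) \rangle = \sum_{k=1,2,3} \langle I'(F), L_{\tilde b_k} \circ L_{\tilde b_k}(F) \rangle, \]
so it is enough to prove $\langle I'(F), L_{\tilde b_k} \circ L_{\tilde b_k}(F) \rangle \leq 0$ for each $k$.

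Next, I would fix $k$ and consider the transport problem \eqref{e:transport} with initial data $F$. By Lemma \ref{l:Iconstantonflow}, the quantity $I(F^k(t))$ is constant in $t$, so in particular $\partial_{tt} I(F^k(t))|_{t=0} = 0$. Expanding this second derivative by the chain rule,
\[ 0 = \partial_{tt} I(F^k(t))|_{t=0} = \langle I''(F)\, L_{\tilde b_k}(F), L_{\tilde b_k}(F) \rangle + \langle I'(F), \partial_{tt} F^k(0) \rangle, \]
and using Lemma \ref{l:Qassecondderivative} (or equivalently $\partial_{tt} F^k(0) = L_{\tilde b_k} \circ L_{\tilde b_k}(F)$), we obtain
\[ \langle I'(F), L_{\tilde b_k} \circ L_{\tilde b_k}(F) \rangle = -\langle I''(F)\, L_{\tilde b_k}(F), L_{\tilde b_k}(F) \rangle. \]

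The proof then concludes by invoking the convexity of the Fisher information $I$, noted in Section \ref{s:preliminaries}: the quadratic form $I''(F)$ is nonnegative, so the right-hand side above is $\leq 0$. Summing over $k=1,2,3$ yields $\langle I'(F), Q(F) \rangle \leq 0$, and plugging into Lemma \ref{l:dti=dtI/2} with $F = f \otimes f$ gives $\partial_t i(f) \leq 0$, which is the desired monotonicity. There is no real obstacle in this argument once the lifting framework of Section \ref{s:lifting} and the sum-of-squares decomposition of $Q$ are in place; the only subtle ingredient is that $I$ is both invariant under the flow of each $L_{\tilde b_k}$ (because $\tilde b_k$ generates a rotation in $\R^6$) and convex, and these two facts combine via the second derivative identity to give the desired sign. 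For general $\alpha$, neither of these properties survives cleanly, which is precisely why the later sections need to introduce the tangential Fisher information $I_{tan}$ and compensate for the normal direction separately.
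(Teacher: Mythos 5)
Your proposal is correct and follows essentially the same route as the paper's own proof: reduce via Lemma \ref{l:dti=dtI/2} to the sign of $\langle I'(F), Q(F)\rangle$, flow along each $\tilde b_k$ via \eqref{e:transport}, use the constancy of $I$ along that flow (Lemma \ref{l:Iconstantonflow}) together with Lemma \ref{l:Qassecondderivative}, and conclude by the convexity of $I$. No gaps to report.
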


\begin{proof}
We have to verify that \eqref{e:question} holds in this case. Then the result follows using Lemma \ref{l:dti=dtI/2}.

Let $F^k(t,v,w)$ solve \eqref{e:transport} with initial data $F_0 = F(v,w)$. We differentiate $I(F^k)$ twice using Lemma \ref{l:Iconstantonflow} to get that 
\[ 0 = \partial_{tt} I(F^k) = \langle I'(F^k), \partial_{tt} F^k \rangle + \langle I''(F^k) \partial_t F^k, \partial_t F^k \rangle. \]

Since the Fisher information is convex, then $\langle I''(F^k) \partial_t F^k, \partial_t F^k \rangle \geq 0$. Thus, using Lemma \ref{l:Qassecondderivative}, we conclude
\[ \langle I'(F), Q(F) \rangle = - \sum_{k=1,2,3} \langle I''(F) \partial_t F^k, \partial_t F^k \rangle_{|t=0} \leq 0.\]
\end{proof}
Lemma \ref{l:Qassecondderivative} still holds if we use $\sqrt \alpha \ \tilde b_k$ instead of $\tilde b_k$ for problem \eqref{e:transport}. However, Lemma \ref{l:Iconstantonflow} would not hold for any non-constant function $\alpha$. In order to verify the inequality \eqref{e:question}, we need another idea whenever $\alpha$ is not constant.

%
%
%
%
%
%


\section{The Fisher information along layers}
\label{s:alternatives}

As we explained in Section \ref{s:vectorsb} and recorded in \eqref{e:Q_as_sum_of_squares_of_Lbk}, the operator $Q$ can be expressed in terms of a composition of transport operators $L_{\sqrt{\alpha}\tilde b_k}$. In Section \ref{s:mm} we saw how this decomposition can be used to show the monotonicity of the Fisher information in the case of Maxwell molecules $\alpha \equiv 1$. In this section we begin the analysis for non-constant $\alpha$. 

Since we will be writing $L_{\sqrt{\alpha}\tilde b_k}$ repeatedly, from now on we will use the simpler notation
\begin{equation}\label{e:Lks}
  L_{k}(F) := \sqrt \alpha  \ \tilde b_k \cdot \nabla F.
\end{equation}
As noted in Section \ref{s:mm}, when $\alpha \equiv 1$ the operator $L_{k}$ is the generator of a flow by isometries in $\R^6$, and therefore it preserves the Fisher information $I(F)$. This is not true any more when $\alpha$ is not constant. We cannot use the analysis in Section \ref{s:mm} to immediately deduce \eqref{e:question}.

The main observation for this section is that since $\alpha$ depends only on $|v-w|$ and the vectors $\tilde b_k$ are tangent to these level sets, the flow defining $L_{k}$ is still an isometry layer-by-layer when we restrict our analysis to the level sets of $|v-w|$. Therefore, if we modify the Fisher information to only take into account the components of $\nabla F$ that are tangent to these level sets, we obtain a quantity that is indeed preserved by the flow of $L_{k}$.

The unit normal to the level set of $|v-w|$ is precisely the vector field $\n$ defined in \eqref{e:n}. The \emph{modified} Fisher information that we want to study is the following
\begin{equation} \label{e:Itangent}
\begin{aligned}
I_{tan}(F) &:= \iint_{\R^6} \frac{|\nabla F|^2}F - \frac{(\n \cdot \nabla F)^2} F \dd w \dd v \\
&= \iint_{\R^6} \frac{|\partial_{v_i} F|^2 + |\partial_{w_i} F|^2}F - \frac {(v_i-w_i) (v_j-w_j)}{2|v-w|^2} \frac{(\partial_{v_i} - \partial_{w_i})F (\partial_{v_j} - \partial_{w_j})F} F \dd w \dd v.
\end{aligned}
\end{equation}
It is convenient to write $I_{tan}$ as $I_{sph}+I_{par}$ where
\begin{align*}
I_{sph}(F) &:= \iint_{\R^6} \frac{a_{ij}(v-w)}{2|v-w|^2} \cdot \frac{(\partial_{v_i} - \partial_{w_i})F (\partial_{v_j} - \partial_{w_j})F} F \dd w \dd v, \\
I_{par}(F) &:= \iint_{\R^6} \frac{|(\partial_{v_i} + \partial_{w_i}) F|^2} {2F} \dd w \dd v.
\end{align*}
We used the notation $a_{ij}(z) = |z|^2 \delta_{ij} - z_i z_j$. 

\begin{remark} The subindices in $I_{tan},I_{sph},$ and $I_{par}$ are meant to convey which directions are being included. We already noted $I_{tan}(F)$ is the Fisher information of $F$ in the tangential directions to the level sets of $(v,w)\mapsto \alpha(|v-w|)$, and so here we call it the \emph{tangential} (with respect to the level sets of $\alpha$) \emph{Fisher information}. Meanwhile, $I_{sph}(F)$ involves the directions tangent to $\sphere(v,w)$, we call it the \emph{spherical Fisher information}. Last but not least, $I_{par}(F)$ involves the directions in the $3$ dimensional linear subspace of $\mathbb{R}^6$ of ``parallel'' pairs of velocities $\{ (e,e) \in \mathbb{R}^6 \mid e \in \R^3\}$, we call it the \emph{parallel Fisher Information}.

\end{remark}

\begin{prop} \label{p:In}
Let $\tilde b_k$ be the vector fields from \eqref{e:tildebk}, let $\alpha = \alpha(|v-w|)$ be an arbitrary nonnegative function and let $L_{k}$ be as in \eqref{e:Lks}. For any smooth positive function $F:\R^6 \to (0,\infty)$ with rapid decay at infinity and any $k=1,2,3$, we have
\begin{align*}
\langle I_{par}'(F) , L_{k}(F) \rangle &= 0 \\
\langle I_{sph}'(F) , L_{k}(F) \rangle &= 0 \\
\langle I_{tan}'(F) , L_{k}(F) \rangle &= 0.
\end{align*}

Moreover,
\[ \langle I_{tan}'(F), Q(F) \rangle = - \sum_{k=1,2,3} \langle I_{tan}''(F) L_{k}(F), L_{k}(F) \rangle \leq 0,\]
and similar identities hold for $I_{sph}$ and $I_{par}$.
\end{prop}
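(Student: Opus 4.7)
The plan is to apply Lemma \ref{l:Derivative_Ie_along_Lb} after decomposing each of $I_{par}$, $I_{sph}$, $I_{tan}$ as a finite sum of directional Fisher informations $I_e$, so that the desired first-order invariance reduces to computing the Lie brackets $[e,\sqrt{\alpha}\,\tilde b_k]$. For $I_{par}$ the natural decomposition is $I_{par}(F)=\tfrac12\sum_{i=1}^3 I_{(e_i,e_i)}(F)$, where $(e_i,e_i)\in\R^6$ is a constant vector field. Because $(e_i,e_i)\cdot\nabla=\partial_{v_i}+\partial_{w_i}$ annihilates every function of $v-w$, the bracket $[(e_i,e_i),\sqrt{\alpha}\,\tilde b_k]$ vanishes identically. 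Since $\sqrt{\alpha}\,\tilde b_k$ is divergence-free (because $\dv\tilde b_k=0$ and $\tilde b_k\cdot\nabla\sqrt{\alpha}=0$), Lemma \ref{l:Derivative_Ie_along_Lb} gives $\langle I_{par}'(F),L_k(F)\rangle=0$ at once.

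For $I_{sph}$, the identity \eqref{e:decompositionofaij} rewrites the integrand so that $I_{sph}(F)=\tfrac12\sum_{j=1}^3 I_{\tilde b_j/|v-w|}(F)$. Applying the bracket identity $[fX,gY]=fg[X,Y]+f(Xg)Y-g(Yf)X$ with $f=1/|v-w|$, $g=\sqrt{\alpha}$, $X=\tilde b_j$, $Y=\tilde b_k$, and using that $\tilde b_j$ and $\tilde b_k$ are tangent to the level sets of $|v-w|$ (so kill both $f$ and $g$), one gets
\[
[\tilde b_j/|v-w|,\sqrt{\alpha}\,\tilde b_k]=\frac{\sqrt{\alpha}}{|v-w|}[\tilde b_j,\tilde b_k].
\]
A direct computation of $[b_1,b_2]=-b_3$ and its cyclic analogues, lifted to $\R^6$, yields $[\tilde b_j,\tilde b_k]=-2\epsilon_{jkl}\tilde b_l$. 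Plugging this into Lemma \ref{l:Derivative_Ie_along_Lb} and summing over $j$, the integrand becomes proportional to $\sum_{j,l}\epsilon_{jkl}(\tilde b_j\cdot\nabla\log F)(\tilde b_l\cdot\nabla\log F)$, which vanishes because $\epsilon_{jkl}$ is antisymmetric in $(j,l)$ while the product is symmetric. Hence $\langle I_{sph}'(F),L_k(F)\rangle=0$, and the identity for $I_{tan}=I_{sph}+I_{par}$ is immediate.

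For the monotonicity claim the plan is to flow $F$ along $\sqrt{\alpha}\,\tilde b_k$: let $F_t$ solve $\partial_t F_t=L_k(F_t)$ with $F_0=F$. The first part of the proposition says $I_{tan}(F_t)$ is constant in $t$, and since $\partial_{tt}F_t|_{t=0}=L_k(L_k(F))$, differentiating $I_{tan}(F_t)$ twice and evaluating at $t=0$ gives
\[
0=\langle I_{tan}''(F)L_k(F),L_k(F)\rangle+\langle I_{tan}'(F),L_k(L_k(F))\rangle.
\]
Summing over $k$ and invoking \eqref{e:Q_as_sum_of_squares_of_Lbk} yields the stated equality. For the sign, the computation in the proof of Lemma \ref{l:SecondDerivative_Ie_along_Lb} actually shows $\langle I_e''(F)G,G\rangle=2\iint F\,(e\cdot\nabla(G/F))^2\dd w\dd v\geq 0$ for an arbitrary test function $G$, so every $I_e$ is convex and hence so are $I_{par}$, $I_{sph}$, $I_{tan}$. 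The analogous identities for $I_{sph}$ and $I_{par}$ in place of $I_{tan}$ follow by the identical argument, using the corresponding invariance already established in the first part.

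The main obstacle is recognizing the correct decomposition of $I_{sph}$ in terms of $\tilde b_j/|v-w|$ and seeing that the structure constants of the rotation algebra force the first-order derivative to vanish by antisymmetry. The two facts that make everything else routine are (i) $\alpha$ and $|v-w|$ are constants of motion for each $\tilde b_k$, which cleanly reduces the Lie brackets to $[\tilde b_j,\tilde b_k]$; and (ii) the bracket $[\tilde b_j,\tilde b_k]$ is itself a linear combination of $\tilde b_l$'s, antisymmetric in $(j,k)$, which is exactly what is needed to pair against the symmetric Fisher-type integrand.
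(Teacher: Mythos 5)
Your proposal is correct and follows essentially the same route as the paper: the same decompositions of $I_{par}$ and $I_{sph}$ into directional Fisher informations, the same application of Lemma \ref{l:Derivative_Ie_along_Lb} with the brackets $[p_i,\sqrt{\alpha}\,\tilde b_k]=0$ and $[\tilde b_j,\tilde b_k]=-2\epsilon_{jkl}\tilde b_l$ cancelling by antisymmetry, and the same flow-plus-convexity argument (mimicking Proposition \ref{p:mm}) for the final identity. Your explicit verification of convexity of each $I_e$ via the computation in Lemma \ref{l:SecondDerivative_Ie_along_Lb} is a nice touch but not a different method.
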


\begin{proof}
Since $I_{tan} = I_{par} + I_{sph}$, then the result follows for $I_{tan}$ after we prove it for $I_{sph}$ and $I_{par}$.

Let us start with the case of $I_{par}$. Let us define the unit vectors $p_1$, $p_2$ and $p_3$ in $\R^6$ as
\[ p_1= \frac 1 {\sqrt 2} \begin{pmatrix} 1 \\ 0 \\ 0 \\ 1 \\ 0 \\ 0 \end{pmatrix}, \qquad p_2 =  \frac 1 {\sqrt 2} \begin{pmatrix} 0 \\ 1 \\ 0 \\ 0 \\ 1 \\ 0 \end{pmatrix}, \qquad p_3=  \frac 1 {\sqrt 2} \begin{pmatrix} 0 \\ 0 \\ 1 \\ 0 \\ 0 \\ 1 \end{pmatrix}.\]
We write $I_{par}$ in terms of these vectors. We have
\[ I_{par}(F) = \sum_{i=1,2,3}\iint_{\R^6}\frac{|p_i \cdot \nabla F|^2}F \dd w \dd v.\]
We observe that $[p_i,\sqrt{\alpha} \ \tilde b_k]=0$ for $i=1,2,3$ and $k=1,2,3$. Then, using Lemma \ref{l:Derivative_Ie_along_Lb}, we get that $\langle I_{par}'(F), L_k(F) \rangle = 0$.


We now move on to $I_{sph}$. Because of \eqref{e:lifteddecompositionofaij}, we can write $I_{sph}$ as
\[ I_{sph} = \sum_{i=1,2,3} \iint_{\R^6}  \frac 1 {2|v-w|^2}  \frac{|\tilde b_i \cdot \nabla F|^2}F \dd w \dd v.\]
Differentiation in the directions $\tilde b_i$ and $\tilde b_k$ does not commute when $i \neq k$. Their commutators are easy to compute
\[
[\tilde b_1,\tilde b_2] = -2\tilde b_3, \qquad [\tilde b_2,\tilde b_3] = -2\tilde b_1, \qquad [\tilde b_3,\tilde b_1] = -2\tilde b_2.
\]
Taking this into account, we compute $\langle I_{sph}'(F), L_k(F) \rangle$ using Lemma \ref{l:Derivative_Ie_along_Lb}
\begin{align*}
 \langle I_{sph}'(F), L_k(F) \rangle 
&= \sum_{i=1,2,3} \iint_{\R^6} \frac{\sqrt{\alpha}}{|v-w|^2} (\tilde b_i \cdot \nabla \log F) \cdot ([\tilde b_i, \tilde b_k] \cdot \nabla \log F) \ F \dd w \dd v = 0.
\end{align*}
The last identity holds because the two terms where $k \neq i$ cancel out. For example for $k=1$, we have $[\tilde b_2, \tilde b_1] = 2\tilde b_3$ and $[\tilde b_3, \tilde b_1] = -2\tilde b_2$, which makes the integrand
\[ \frac{2\sqrt{\alpha}}{|v-w|^2} F \ \left( (\tilde b_2 \cdot \nabla \log F) \cdot (\tilde b_3 \cdot \nabla \log F) - (\tilde b_3 \cdot \nabla \log F) \cdot (\tilde b_2 \cdot \nabla \log F) \right) = 0. \]
Therefore, we obtain both $\langle I_{sph}'(F), L_{k}(F) \rangle = 0$ and $\langle I_{par}'(F), L_{_k}(F) \rangle$. Adding them, we also obtain $\langle I_{tan}'(F), L_{k}(F) \rangle$.

The final identity follows mimicking the proof of Proposition \ref{p:mm}.
\end{proof}

\section{Using commutators to estimate the remaining directional derivative}
\label{s:liebrackets}

In Section \ref{s:alternatives} we found a quantity $I_{tan}$ that is monotone decreasing in the direction of $Q$. The difference between this quantity $I_{tan}$ and the full Fisher information $I$ depends only on the component of $\nabla F$ in the single direction $\n$ perpendicular to the level sets of $|v-w|$. In this section, we analyze the value of $J := I - I_{tan}$ and compute its derivative explicitly.

\begin{lemma} \label{l:liebrackets}
Let $\n$ be as in \eqref{e:n} and $b = \tilde b_k$ for one of the vector fields \eqref{e:tildebk} with $k=1,2,3$. Let $J$ be the functional
\[ J(F) := \iint_{\R^6} \frac{|\n\cdot \nabla F|^2}F \dd w \dd v,\]
and $Q_b(F) := \alpha \, b \cdot \nabla(b \cdot \nabla F)$, for some scalar function $\alpha = \alpha(|v-w|)$.
The following identity holds
\[
\begin{aligned}
\langle J'(F),Q_b(F) \rangle &= \int_{\R^6}  - 2 F \left( \n \cdot \nabla ( \sqrt{\alpha} \ b \cdot \nabla \log F) \right)^2 \\
& \qquad + \frac{(\alpha')^2}{\alpha} (b \cdot \nabla \log F)^2 \ F  \dd w \dd v.
\end{aligned}
\]

\end{lemma}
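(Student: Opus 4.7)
The strategy is to rewrite $Q_b$ as a square of a first-order operator and then invoke the two preparatory lemmas of Section~\ref{s:vectorsb}. Because $b = \tilde b_k$ is tangent to the level sets of $|v-w|$, we have $b \cdot \nabla \sqrt\alpha = 0$, and therefore $Q_b F = \tilde L \tilde L F$ where $\tilde L := L_{\sqrt\alpha\, b} = \sqrt\alpha\, b \cdot \nabla$. The vector field $\sqrt\alpha\, b$ is also divergence-free, so both Lemma~\ref{l:Derivative_Ie_along_Lb} and Lemma~\ref{l:SecondDerivative_Ie_along_Lb} apply to it.

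Consider the transport curve $F(t)$ defined by $\partial_t F = \tilde L F$ with $F(0) = F$, so that $\partial_{tt} F|_{t=0} = \tilde L \tilde L F = Q_b F$. The second-order chain rule applied to $t \mapsto J(F(t))$ gives
\[
  \langle J'(F), Q_b F \rangle \;=\; \frac{d^2}{dt^2} J(F(t))\Big|_{t=0} \;-\; \langle J''(F)\,\tilde L F,\, \tilde L F \rangle.
\]
Taking $e = \n$ in Lemma~\ref{l:SecondDerivative_Ie_along_Lb} identifies the last term as exactly $2\iint F\,(\n \cdot \nabla(\sqrt\alpha\, b \cdot \nabla \log F))^2\, \dd w\, \dd v$, which supplies (with the minus sign) the first term of the claimed identity.

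For the remaining term, apply Lemma~\ref{l:Derivative_Ie_along_Lb} with $e = \n$ and the commutator identity $[\n, \sqrt\alpha\, b] = (\n \cdot \nabla \sqrt\alpha)\, b = \tfrac{\alpha'}{\sqrt{2\alpha}}\, b$ (from $[\n, b] = 0$ and $\n \cdot \nabla |v-w| = \sqrt 2$) to obtain
\[
  \frac{d}{dt} J(F(t)) \;=\; \sqrt 2 \iint_{\R^6} \frac{\alpha'}{\sqrt\alpha}\, \phi\, \psi\, F\, \dd w\, \dd v,
\]
where $\phi := \n \cdot \nabla \log F(t)$ and $\psi := b \cdot \nabla \log F(t)$. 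To differentiate once more, track the evolution of $\phi$, $\psi$, and $F$ along the flow of $\tilde L$: from $[\n, \tilde L] = \tfrac{\alpha'}{\sqrt{2\alpha}}\, b$ and $[b, \tilde L] = 0$ (both consequences of $b \cdot \nabla \sqrt\alpha = 0$), one gets $\partial_t \phi = \tilde L \phi + \tfrac{\alpha'}{\sqrt{2\alpha}}\psi$, $\partial_t \psi = \tilde L \psi$, and $\partial_t F = F\sqrt\alpha\,\psi$. Substituting these into the $t$-derivative of the displayed integral, using $\psi\, \tilde L\phi + \phi\,\tilde L\psi = \tilde L(\phi\psi)$, and integrating by parts the $\tilde L$-derivative term (with $\dv(\sqrt\alpha\, b) = 0$ and $\tilde L(\alpha'/\sqrt\alpha) = 0$, so that $\dv(\tfrac{\alpha'}{\sqrt\alpha}\,F\sqrt\alpha\, b) = \alpha'\, F\,\psi$), the two terms of order $\alpha'\sqrt\alpha\,\phi\psi^2 F$ cancel, leaving exactly $\iint \tfrac{(\alpha')^2}{\alpha}\psi^2 F\, \dd w\, \dd v$. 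Combining this with the $J''$ term yields the lemma.

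The main obstacle is the bookkeeping of the commutators and the cancellation at the second-derivative stage. The algebraic crux is the pair of identities $b \cdot \nabla \sqrt\alpha = 0$ and $\n \cdot \nabla \sqrt\alpha = \alpha'/\sqrt{2\alpha}$: the former kills $[b, \tilde L]$ (so $\psi$ is simply transported) and makes $\alpha'/\sqrt\alpha$ invariant along the $\tilde L$-flow, producing the crucial cancellation; the latter generates the $[\n, \tilde L]$-term from which the residual $(\alpha')^2/\alpha$ arises.
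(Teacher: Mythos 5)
Your proposal is correct, but it follows a genuinely different route from the paper's own proof --- in fact it is precisely the alternative sketched in the remark immediately after Lemma \ref{l:liebrackets}. You transport $F$ along the flow $\partial_t F = L_{\sqrt{\alpha}\,b}F$ (legitimate since $b\cdot\nabla\sqrt{\alpha}=0$ gives $Q_b = L_{\sqrt{\alpha}\,b}\circ L_{\sqrt{\alpha}\,b}$), use the second-order chain rule to split $\langle J'(F),Q_b(F)\rangle$ into $\tfrac{d^2}{dt^2}J(F(t))|_{t=0}$ minus $\langle J''(F)L_{\sqrt{\alpha}\,b}F, L_{\sqrt{\alpha}\,b}F\rangle$, identify the latter via Lemma \ref{l:SecondDerivative_Ie_along_Lb} with $e=\n$, and compute the former by differentiating the first-variation formula of Lemma \ref{l:Derivative_Ie_along_Lb} along the flow; your commutator bookkeeping ($[\n,\sqrt{\alpha}\,b]=\tfrac{\alpha'}{\sqrt{2\alpha}}\,b$, $[b,\sqrt{\alpha}\,b]=0$, $\tilde L(\alpha'/\sqrt{\alpha})=0$) and the cancellation of the two $\alpha'\phi\psi^2F$ terms after integration by parts are all accurate, and the constants come out right. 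The paper instead computes $\langle J'(F),Q_b(F)\rangle$ directly: expand the Gateaux derivative, commute $\n$ with $\alpha b$, integrate by parts twice, and complete a square, with the $(\alpha')^2/\alpha$ remainder appearing as the leftover of the completed square. Your version buys structural transparency --- the dissipative term is pure convexity of $J$ along the flow, while the error term is exactly the failure of $J$ to be invariant under the non-isometric flow, generated by the single commutator $[\n,\sqrt{\alpha}\,b]$ --- and it reuses the preparatory lemmas of Section \ref{s:vectorsb}; the paper's one-pass computation avoids having to invoke the flow of $\sqrt{\alpha}\,\tilde b_k$ at all (a minor convenience, since for very soft potentials $\sqrt{\alpha}$ blows up on the measure-zero set $\{v=w\}$, so in your argument one should note that the flow is defined off that set and that $F(t)$ stays positive and rapidly decaying because the flow preserves $|v-w|$ and $|v|^2+|w|^2$ --- the same sort of technicality the paper defers to Appendix \ref{a:byparts}).
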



\begin{proof}
The proof is a direct computation. We write it in detail.

Recall that $\dv (\alpha b)=0$ and also $\dv \left( (\n \cdot \nabla \alpha) b \right) = 0$. Moreover,
\[ [n, \alpha b] = (\n \cdot \nabla \alpha) b.\]

It is good to remember that since $[\n,b]=0$, then $\n \cdot \nabla(b \cdot \nabla G) = b\cdot \nabla(\n \cdot \nabla G)$ for any function $G$.

We differentiate and follow the computation
\begin{align*}
\langle J'(F),Q_b(F) \rangle &= \iint_{\R^6} 2 (\n \cdot \nabla \log F) (\n \cdot \nabla Q_b(F)) - (\n \cdot \nabla \log F)^2 Q_b(F) \dd w \dd v \\
&= \iint_{\R^6}  2 (\n \cdot \nabla \log F) \ (\n \cdot \nabla (\alpha \ b \cdot \nabla (b \cdot \nabla F))) - (\n \cdot \nabla \log F)^2 \ (\alpha b \cdot \nabla (b \cdot \nabla F)) \dd w \dd v.
\end{align*}
We commute differentiation with respect to $\n$ and $\alpha b$ introducing an error term,
\begin{align*}
\langle J'(F),Q_b(F) \rangle  &= \iint_{\R^6}  2 (\n \cdot \nabla \log F) \ (\alpha b \cdot \nabla (\n \cdot \nabla (b \cdot \nabla F)))  \\
& \qquad +2 (\n \cdot \nabla \log F) \ (\n \cdot \nabla \alpha) \ b \cdot \nabla (b \cdot \nabla F))  \\
&\qquad - (\n \cdot \nabla \log F)^2 \ (\alpha b \cdot \nabla (b \cdot \nabla F)) \dd w \dd v, \\
\intertext{integrating by parts using that $\dv b=0$ and $\dv (\alpha b) = 0$,}
&= \iint_{\R^6}  - 2 \alpha b \cdot \nabla (\n \cdot \nabla \log F) \ (\n \cdot \nabla (b \cdot \nabla F))  \\
& \qquad - 2 (\n \cdot \nabla \alpha) \ b \cdot \nabla( \n \cdot \nabla \log F ) \ (b \cdot \nabla F)  \\
&\qquad + \alpha b \cdot \nabla (\n \cdot \nabla \log F)^2 \ (b \cdot \nabla F) \dd w \dd v.
\end{align*}
We expand the derivative in the last term and observe $\n \cdot \nabla (b \cdot \nabla \log F) \ F = (\n \cdot \nabla (b \cdot \nabla F)) - (\n \cdot \nabla \log F)(b \cdot \nabla F)$, from where we get
\begin{align*}
\langle J'(F),Q_b(F) \rangle  &= \iint_{\R^6}  - 2\alpha F \left( \n \cdot \nabla (b \cdot \nabla \log F) \right)^2 \\
& \qquad - 2 (\n \cdot \nabla \alpha) b \cdot \nabla \log F \ (\n \cdot \nabla (b \cdot \nabla \log F)) \ F\dd w \dd v.
\end{align*}
We complete squares and conclude this is equal to 
\begin{align*}
&\iint_{\R^6}  - 2 F \left( \sqrt{\alpha} \ \n \cdot \nabla ( b \cdot \nabla \log F) + \frac {(\n \cdot \nabla \alpha)} {2 \sqrt{\alpha}} \ b \cdot \nabla \log F \right)^2 + \frac 12 \frac{(\n \cdot \nabla \alpha)^2}{\alpha} (b \cdot \nabla \log F)^2 F \dd w \dd v \\
&= \iint_{\R^6}  - 2 F \left( \n \cdot \nabla ( \sqrt{\alpha} b \cdot \nabla \log F) \right)^2 + \frac{(\alpha')^2}{\alpha} (b \cdot \nabla \log F)^2 F  \dd w \dd v.
\end{align*}
\end{proof}

\begin{remark}
If is possible to compute $\langle J'(F), Q(F) \rangle$ with a computation that follows the ideas of Section \ref{s:mm} more closely. For that, we would have to compute $\langle J',L_k \rangle$ (which is no longer zero) in terms of commutators of $n$ and $b_k$. Then, following the ideas in Section \ref{s:mm}, we may proceed with computing the second derivative in time of $I(F)$ when $F_t = L_k(F)$ and derive the formula in Lemma \ref{l:liebrackets}.

It is also true that we can rewrite the proof of Proposition \ref{p:mm} and of Proposition \ref{p:In} using a more direct computation with commutators like the one presented here for Lemma \ref{l:liebrackets}.

We believe the method of Section \ref{s:mm} is more intuitive than the computation presented in this section. Yet, there might be some value in presenting alternative approaches to the computation.
\end{remark}

\section{The three distinct diffusion terms}
\label{s:threediffusionterms}

In Section \ref{s:alternatives}, we analyzed the derivative of $I_{tan}$ in the direction $Q(F)$ and obtained a negative value in terms of $I''_{tan}$. In Section \ref{s:liebrackets}, we analyzed the derivative of $J$ in the direction $Q(F)$ and obtained a negative value in terms of $J''$ and a positive error term. In this section, we analyze the derivative of the full Fisher information $I$ in the direction $Q(F)$, which results from adding the estimates we computed in previous sections.

We want to analyze the second derivative of the Fisher information.

\begin{lemma} \label{l:D2I}
Let $I$ be the Fisher information of a smooth positive function $F:\R^{6} \to (0,\infty)$, as in \eqref{e:fisherlifted}. Let $b : \R^{6} \to \R^{6}$ be any smooth vector field. Then
\[ \langle I''(F) (b \cdot \nabla F), (b \cdot \nabla F) \rangle = 2 \iint_{\R^6} F |\nabla (b \cdot \nabla \log F)|^2 \dd w \dd v.\]
\end{lemma}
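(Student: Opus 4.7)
The plan is to reduce this immediately to the already-proven Lemma \ref{l:SecondDerivative_Ie_along_Lb}. The key observation is that the Fisher information $I$ decomposes orthogonally in terms of directional Fisher informations: if $e_1,\dots,e_6$ denotes the standard basis of $\R^6$, then $|\nabla F|^2 = \sum_{i=1}^6 (e_i\cdot\nabla F)^2$, so
\[ I(F) = \sum_{i=1}^6 I_{e_i}(F). \]
Since the Gateaux derivatives are linear in the functional, the second derivative decomposes in the same way.

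Thus, applying Lemma \ref{l:SecondDerivative_Ie_along_Lb} to each $e_i$ with the same vector field $b$ and summing over $i=1,\dots,6$, I would conclude
\[ \langle I''(F) L_b(F), L_b(F) \rangle = \sum_{i=1}^6 \langle I_{e_i}''(F) L_b(F), L_b(F) \rangle = 2\iint_{\R^6} \sum_{i=1}^6 \bigl(e_i\cdot \nabla(b\cdot \nabla \log F)\bigr)^2 F \dd w \dd v, \]
and the inner sum is exactly $|\nabla(b\cdot\nabla\log F)|^2$, giving the desired identity.

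There is really no obstacle here; the only thing worth double-checking is that the manipulation in the proof of Lemma \ref{l:SecondDerivative_Ie_along_Lb} is valid componentwise so that summing is legitimate. If one prefers to avoid citing the previous lemma, the alternative is to redo the direct computation: starting from
\[ \langle I'(F), G\rangle = \iint_{\R^6} \frac{2\nabla F \cdot \nabla G}{F} - \frac{|\nabla F|^2}{F^2} G \dd w \dd v, \]
differentiate once more in $F$, complete squares as in the proof of Lemma \ref{l:SecondDerivative_Ie_along_Lb}, and specialize $G = b\cdot \nabla F = F \, b\cdot\nabla\log F$, which collapses the integrand to $2 F\,|\nabla(b\cdot\nabla\log F)|^2$ after using $\nabla F/F = \nabla\log F$.
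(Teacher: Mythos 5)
Your proposal is correct and is essentially the paper's own proof: the paper likewise applies Lemma \ref{l:SecondDerivative_Ie_along_Lb} with the canonical basis $e_1,\dots,e_6$ of $\R^6$ and adds the six identities, using that $|\nabla(b\cdot\nabla\log F)|^2=\sum_i (e_i\cdot\nabla(b\cdot\nabla\log F))^2$. Note also that Lemma \ref{l:SecondDerivative_Ie_along_Lb} imposes no divergence-free hypothesis on $b$, so the reduction is valid for an arbitrary smooth vector field, exactly as the statement requires.
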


\begin{proof}
We apply Lemma \ref{l:SecondDerivative_Ie_along_Lb} with the canonical basis of $\R^6$: $e_1, e_2, \dots, e_6$. We add the six corresponding identities and obtain the desired result.
\end{proof}

We apply Lemma \ref{l:D2I} with $b = \sqrt{\alpha} \ \tilde b_k$ for each $k=1,2,3$ and $\alpha = \alpha(|v-w|)$. These vector fields are divergence-free for any scalar function $\alpha$. We obtain.
\[ \langle I''(F) (\sqrt \alpha \ \tilde b_k \cdot \nabla F), (\sqrt \alpha \tilde b_k \cdot \nabla F) \rangle = 2 \iint_{\R^6} F |\nabla (\sqrt \alpha \ \tilde b_k \cdot \nabla \log F)|^2 \dd w \dd v.\]

\begin{lemma} \label{l:remainder}
Let $Q$ be the linear operator associated to the Landau equation as in \eqref{e:Q}. Let $I$ be the usual Fisher information. Then
\begin{align*}
 \frac 12 \langle I'(F), Q(F) \rangle &= -\sum_{k=1,2,3} \iint_{\R^6} \left|\nabla \left[ \sqrt \alpha \ \tilde b_k \cdot \nabla \log F \right] \right|^2 \ F \dd w \dd v \\
& \qquad + \sum_{k=1,2,3} \iint_{\R^6} \frac{\alpha'(|v-w|)^2}{2\alpha(|v-w|)} \left(\tilde b_k \cdot \nabla \log F \right)^2 \ F \dd w \dd v.
\end{align*}
Here, $\tilde b_k$ are the vector fields defined in \eqref{e:tildebk}.
\end{lemma}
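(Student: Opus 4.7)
The plan is to exploit the orthogonal decomposition \(I = I_{tan} + J\), where \(J(F) = \iint_{\R^6} (\n \cdot \nabla F)^2 / F \dd w \dd v\) isolates the contribution from the normal direction to the level sets of \(|v-w|\), and \(I_{tan}\) is the tangential Fisher information introduced in Section~\ref{s:alternatives}. I will compute \(\langle I_{tan}'(F), Q(F) \rangle\) and \(\langle J'(F), Q(F) \rangle\) separately and add them.

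For the tangential piece, Proposition~\ref{p:In} already gives
\[
\langle I_{tan}'(F), Q(F) \rangle = -\sum_{k=1,2,3}\langle I_{tan}''(F) L_{k}(F), L_{k}(F) \rangle,
\]
so the task reduces to evaluating \(\langle I_{tan}''(F) L_{k}(F), L_{k}(F)\rangle\). I would write \(I_{tan}'' = I'' - J''\) and invoke the two second-derivative lemmas: Lemma~\ref{l:D2I} with \(b = \sqrt{\alpha}\,\tilde b_k\) produces
\[
\langle I''(F) L_{k}(F), L_{k}(F) \rangle = 2\iint_{\R^6} F \bigl|\nabla(\sqrt{\alpha}\,\tilde b_k \cdot \nabla \log F)\bigr|^2 \dd w \dd v,
\]
while Lemma~\ref{l:SecondDerivative_Ie_along_Lb} with \(e = \n\) and \(b = \sqrt{\alpha}\,\tilde b_k\) (noting \(J = I_{\n}\)) produces
\[
\langle J''(F) L_{k}(F), L_{k}(F)\rangle = 2\iint_{\R^6} F \bigl(\n \cdot \nabla(\sqrt{\alpha}\,\tilde b_k \cdot \nabla \log F)\bigr)^2 \dd w \dd v.
\]
Subtracting these two and summing in \(k\) expresses \(\langle I_{tan}'(F),Q(F)\rangle\) as minus the full-gradient term, plus a positive normal-squared correction.

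For the \(J\) piece, I use that \(Q(F) = \sum_{k} \alpha\,\tilde b_k \cdot \nabla(\tilde b_k \cdot \nabla F)\), an identity that follows from \(\tilde b_k \cdot \nabla \alpha = 0\) since \(\alpha\) depends only on \(|v-w|\) and \(\tilde b_k\) is tangent to those level sets. Applying Lemma~\ref{l:liebrackets} with \(b = \tilde b_k\) to each summand and adding gives
\[
\langle J'(F), Q(F)\rangle = \sum_{k=1,2,3}\iint_{\R^6}\Bigl[-2F\bigl(\n \cdot \nabla(\sqrt{\alpha}\,\tilde b_k \cdot \nabla \log F)\bigr)^2 + \frac{(\alpha')^2}{\alpha}(\tilde b_k \cdot \nabla \log F)^2 F\Bigr]\dd w \dd v.
\]

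Adding the two contributions, the positive normal-squared term picked up inside \(\langle I_{tan}'(F),Q(F)\rangle\) cancels exactly against the first term in \(\langle J'(F),Q(F)\rangle\). After dividing by \(2\), only the full-gradient dissipation and the \((\alpha')^2/\alpha\) remainder survive, yielding precisely the claimed identity. There is no real obstacle beyond careful bookkeeping; the only point worth checking is that the cancellation is exact, which is structurally guaranteed because the sole difference between \(I''\) and \(I_{tan}''\) is the \(J''\) piece that produces exactly the normal-direction term appearing in Lemma~\ref{l:liebrackets}.
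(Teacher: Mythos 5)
Your proposal is correct and follows essentially the same route as the paper: decompose $I = I_{tan} + J$, use Proposition \ref{p:In} for the tangential part, sum Lemma \ref{l:liebrackets} over $k$ for the $J$ part, and convert the second-derivative quadratic forms via Lemmas \ref{l:D2I} and \ref{l:SecondDerivative_Ie_along_Lb}. The only difference is cosmetic bookkeeping (you write $I_{tan}'' = I'' - J''$ and cancel the normal-direction terms explicitly, while the paper combines $I_{tan}''+J''=I''$ directly), so the cancellation you check is exactly the one the paper relies on.
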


Both terms are homogeneous of degree one in $F$. The second term vanishes when $\alpha$ is constant, which corresponds to the monotonicity of the Fisher information in the Maxwell-molecules case.

\begin{proof}
We start by recalling that $I$ is almost the same as $I_{tan}$ defined in Section \ref{s:alternatives} except for one extra term
\[ I(F) = I_{tan}(F) + \iint_{\R^6} F \ |\n \cdot \nabla \log F|^2 \dd w \dd v.\]
Here, $\n$ be the vector field defined in \eqref{e:n}.

Recall that with the vector fields $\tilde b_k$ defined in \eqref{e:tildebk},
\[ Q(F) = \sum_{k=1,2,3} \sqrt \alpha \ \tilde b_k \cdot \nabla \left(\sqrt \alpha \ \tilde b_k \cdot \nabla F\right).\]

We established the monotonicity of $I_{tan}$ along the flow of $Q$ in Proposition \ref{p:In}. Moreover, we computed, for $L_k(F) = \sqrt \alpha \ \tilde b_k \cdot \nabla F$,
\begin{align} \label{e:a1}
\langle I_{tan}'(F), Q(F) \rangle &= \sum_{k=1,2,3} -\langle I_{tan}''(F) L_k(F),L_k(F) \rangle.
\end{align}

Let us write $J := I - I_{tan}$. That is
\[ J(F) = \iint_{\R^6} F |\n \cdot \nabla \log F|^2 \dd w \dd v.\]

We now apply Lemma \ref{l:liebrackets} to with each vector field $\tilde b_k$ and add the identities. We get the following equality

\begin{equation} \label{e:a2}
\langle J'(F), Q(F) \rangle = \sum_{k=1,2,3} \langle J''(F) L_k(F),L_k(F) \rangle + \iint_{\R^6} \frac{\alpha'(|v-w|)^2}{\alpha(|v-w|)} F \left( \tilde b_k \cdot \nabla \log F \right)^2 \dd w \dd v.
\end{equation}

Adding \eqref{e:a1} and \eqref{e:a2}, we get
\begin{align*}
 \langle I'(F), Q(F) \rangle &= \sum_{k=1,2,3} \langle I''(F) L_k(F),L_k(F) \rangle + \iint_{\R^6} \frac{\alpha'(|v-w|)^2}{\alpha(|v-w|)} F \left( \tilde b_k \cdot \nabla \log F \right)^2 \dd w \dd v \\
 &= \sum_{k=1,2,3} -2\iint_{\R^6} F\left|\nabla \left[ \sqrt \alpha \ \tilde b_k \cdot \nabla \log F \right]\right|^2 \dd w \dd v \\
 &\qquad + \iint_{\R^6} \frac{\alpha'(|v-w|)^2}{\alpha(|v-w|)} F \left( \tilde b_k \cdot \nabla \log F \right)^2 \dd w \dd v.
\end{align*}
\end{proof}

\begin{lemma} \label{l:gradientdecomposition}
Given any differentiable function $G : \R^{6} \to \R$, the following identity holds at every point $(v,w) \in \R^{6}$.
\[ |\nabla G|^2 = \frac 12 \sum_{i=1,2,3} |(\partial_{v_i}+\partial_{w_i})G|^2 + |\n \cdot \nabla G|^2 + \frac 1{2|v-w|^2} \sum_{k=1,2,3} \left| \tilde b_k \cdot \nabla G\right|^2.\]
Here $\n$ is as in \eqref{e:n} and $\tilde b_k$ are as in \eqref{e:tildebk}.
\end{lemma}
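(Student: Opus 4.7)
My plan is to verify the identity at each point $(v,w) \in \R^6$ by writing both sides in terms of the $\R^3$-gradients $G_v := \nabla_v G$ and $G_w := \nabla_w G$, and then doing two parallelogram-style decompositions: one splitting $\R^6$ into the $(v,w) \mapsto v+w$ and $(v,w) \mapsto v-w$ directions, and a second one inside the $v-w$ copy of $\R^3$ splitting along $v-w$ versus its orthogonal complement.

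First I would rewrite $|\nabla G|^2 = |G_v|^2 + |G_w|^2$ via the polarization identity as
\[
|\nabla G|^2 = \tfrac{1}{2}|G_v + G_w|^2 + \tfrac{1}{2}|G_v - G_w|^2.
\]
The first summand is exactly $\tfrac{1}{2}\sum_i |(\partial_{v_i}+\partial_{w_i})G|^2$, which accounts for the ``parallel'' term on the right-hand side. So the problem reduces to showing
\[
\tfrac{1}{2}|G_v - G_w|^2 = |\n \cdot \nabla G|^2 + \frac{1}{2|v-w|^2}\sum_{k=1,2,3}\left|\tilde b_k \cdot \nabla G\right|^2.
\]

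Next I would decompose the $\R^3$ vector $h := G_v - G_w$ orthogonally as $h = \Pi(v-w) h + (I - \Pi(v-w)) h$, where $\Pi(z)$ is the projection onto the plane perpendicular to $z$ (as in Remark \ref{r:orthogonal projection identity}). The component along $v-w$ satisfies $|(I-\Pi(v-w))h|^2 = ((v-w)\cdot h)^2/|v-w|^2$; combined with the definition \eqref{e:n} of $\n$ one checks directly that
\[
\tfrac{1}{2}|(I - \Pi(v-w))h|^2 = |\n \cdot \nabla G|^2.
\]
For the tangential component, I would invoke Remark \ref{r:orthogonal projection identity} which says that $\Pi_{ij}(z) = |z|^{-2}\sum_k (b_k)_i (b_k)_j$, yielding $|\Pi(v-w) h|^2 = |v-w|^{-2}\sum_k (b_k \cdot h)^2$. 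Using the definition \eqref{e:tildebk} of $\tilde b_k$, we have $\tilde b_k \cdot \nabla G = b_k \cdot G_v - b_k \cdot G_w = b_k \cdot h$, so
\[
\tfrac{1}{2}|\Pi(v-w) h|^2 = \frac{1}{2|v-w|^2}\sum_{k=1,2,3}\left|\tilde b_k \cdot \nabla G\right|^2.
\]
Adding the two orthogonal pieces recovers $\tfrac{1}{2}|h|^2 = \tfrac{1}{2}|G_v - G_w|^2$, completing the identity.

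There is no real obstacle here: the only substantive input beyond direct algebra is the already-established identity for $\Pi(z)$ in terms of the $b_k$'s from Remark \ref{r:orthogonal projection identity}, which in turn comes from the decomposition of $a_{ij}(z)$ in \eqref{e:decompositionofaij}. A conceptually equivalent way to organize the same proof is to observe that at each $(v,w)$ with $v \ne w$, the space $\R^6$ decomposes orthogonally as $V_p \oplus \R \n \oplus V_s$, where $V_p = \{(e,e) : e \in \R^3\}$ is spanned by the orthonormal vectors $p_i = \tfrac{1}{\sqrt{2}}(e_i, e_i)$ and $V_s$ is the two-dimensional tangent plane to $\sphere(v,w)$ spanned (over-determinedly) by $\tilde b_1, \tilde b_2, \tilde b_3$; then the three terms on the right-hand side are respectively the squared norms of the projections of $\nabla G$ onto these three mutually orthogonal subspaces.
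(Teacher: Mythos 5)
Your proof is correct and follows essentially the same route as the paper: the parallelogram (polarization) split of $|\nabla G|^2$ into the $(\partial_{v_i}+\partial_{w_i})$ and $(\partial_{v_i}-\partial_{w_i})$ parts, followed by the orthogonal decomposition of $\nabla_v G - \nabla_w G$ along $v-w$ and its perpendicular plane using the identity $\Pi_{ij}(z)=|z|^{-2}\sum_k (b_k)_i(b_k)_j$ from Remark \ref{r:orthogonal projection identity}. The computations verifying the $\n$-term and the $\tilde b_k$-terms match the paper's proof step for step.
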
 

\begin{proof}
We start from the elementary identities
\begin{align*}
  ( (\partial_{v_i}\pm\partial_{w_i})G)^2 = (\partial_{v_i}G)^2\pm 2(\partial_{v_i}G)(\partial_{w_i}G)+(\partial_{w_i}G)^2.
\end{align*}
These can be added together, resulting in
\begin{align*}
  |\nabla G|^2 = \sum \limits_{i=1}^3(\partial_{v_i}G)^2+(\partial_{w_i}G)^2 = \frac{1}{2}\sum \limits_{i=1}^3( (\partial_{v_i}+\partial_{w_i})G)^2  + ( (\partial_{v_i}-\partial_{w_i})G)^2.
\end{align*}
The sum of the squares $((\partial_{v_i}-\partial_{w_i})G)^2$ is simply $|\nabla_v G-\nabla_w G|^2$, and we have
\begin{align*}
  |\nabla_v G-\nabla_w G|^2 = \left|\Pi(v-w)(\nabla_v G-\nabla_w G) \right|^2 + \left( \frac{v-w}{|v-w|} \cdot (\nabla_v G-\nabla_w G) \right)^2.
\end{align*}
where $\Pi(v-w)$ denotes the orthogonal projector from $\R^3$ onto the space perpendicular to $v-w$, which we already introduced in Remark \ref{r:orthogonal projection identity}. The formula for $\Pi$ in Remark \ref{r:orthogonal projection identity} says that
\begin{align*}
\left|\Pi(v-w)(\nabla_v G-\nabla_w G) \right|^2 &= \frac{1}{|v-w|^{2}}\sum \limits_{k=1,2,3} (b_k \cdot (\nabla_vG-\nabla_w G))^2, \\
&= \frac{1}{|v-w|^{2}}\sum \limits_{k=1,2,3}(\tilde b_k\cdot \nabla G)^2. 
\end{align*}

From the definition of $\n$ in \eqref{e:n}, it follows that
\begin{align*}
  \frac{1}{2|v-w|^2}((\nabla_v G-\nabla_w G)\cdot (v-w))^2 = |\n\cdot\nabla G|^2, 	
\end{align*}	
and the lemma is proved.
\end{proof}

Based on Lemma \ref{l:gradientdecomposition}, we may decompose the second derivative $I''$ in the direction of $\sqrt \alpha \ \tilde b_k \cdot \nabla F$ as the sum of three terms. 
\begin{align*}
  |\nabla (\sqrt \alpha \ \tilde b_k \cdot \nabla \log F)|^2 & = \frac{1}{2} \sum \limits_{i=1,2,3}|(\partial_{v_i}+\partial_{w_i})(\sqrt \alpha \ \tilde b_k \cdot \nabla \log F)|^2 \\
  & \;\;\;\; + |\n \cdot \nabla (\sqrt \alpha \ \tilde b_k \cdot \nabla \log F)|^2 + \frac{1}{2|v-w|^2}\sum \limits_{i=1,2,3}|\tilde b_i \cdot \nabla (\sqrt \alpha \ \tilde b_k \cdot \nabla \log F)|^2.
\end{align*}
We multiply this identity by $F$, integrate over $\R^6$ and add up the result for $k=1,2,3$. This, combined with Lemma \ref{l:remainder}, leads to the decomposition
\begin{align*}
  \frac 12 \langle I'(F), Q(F) \rangle = -D_{parallel} - D_{radial} - D_{spherical}  + \sum_{k=1,2,3}\iint_{\R^6} \frac{(\alpha')^2}{2\alpha} \ F \ |\tilde b_k \cdot \nabla \log F|^2\dd w \dd v,
\end{align*}
where
\begin{align}
D_{parallel} &:= \frac 12 \sum_{i,j=1,2,3} \iint_{\R^6} \alpha(|v-w|) F |(\partial_{v_i}+\partial_{w_i}) \tilde b_j \cdot \nabla \log F|^2 \dd w \dd v \nonumber \\
D_{radial} &:= \sum_{i = 1,2,3} \iint_{\R^6} F |\n \cdot \nabla \left( \sqrt \alpha \ \tilde b_i \cdot \nabla \log F \right)|^2 \dd w \dd v \nonumber \\
D_{spherical} &:= \sum_{i,j=1,2,3} \iint_{\R^6} \frac \alpha{2|v-w|^2} F | \tilde b_i \cdot \nabla ( \tilde b_j \cdot \nabla \log F) |^2 \dd w \dd v \label{e:Dspherical}
\end{align}

Let us also write
\begin{equation} \label{e:Rspherical} 
R_{spherical} := \sum_{k=1,2,3} \iint_{\R^6} \frac{\alpha}{|v-w|^2} \ F \ |\tilde b_k \cdot \nabla \log F|^2 \dd w \dd v.
\end{equation}

Note that since $\alpha$ depends only on $|v-w|$, then $\tilde b_k \cdot \nabla \alpha=0$ and $(\partial_{v_i}+\partial_{w_i})\alpha=0$. This is the reason why we can pull the factor $\alpha$ outside of the differentiation in the expressions for $D_{parallel}$ and $D_{spherical}$.

\begin{lemma} \label{l:newremainder}
The following inequality holds
\[ \frac 12 \langle I'(F), Q(F) \rangle \leq -D_{parallel} -D_{radial} - D_{spherical} + \sup_{r>0} \left( \frac{r^2 \alpha'(r)^2}{2 \alpha(r)^2}\right) R_{spherical}. \]
\end{lemma}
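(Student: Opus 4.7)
The plan is to start from the exact identity derived just before the statement of the lemma, namely
\[
\frac 12 \langle I'(F), Q(F) \rangle = -D_{parallel} - D_{radial} - D_{spherical}  + \sum_{k=1,2,3}\iint_{\R^6} \frac{(\alpha'(|v-w|))^2}{2\alpha(|v-w|)} \ F \ |\tilde b_k \cdot \nabla \log F|^2\dd w \dd v.
\]
Since $-D_{parallel} - D_{radial} - D_{spherical}$ already appears in the desired inequality with the correct sign, the only task is to bound the remainder sum by the stated multiple of $R_{spherical}$.

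The strategy is a pointwise estimate on the integrand. First, I would multiply and divide the remainder integrand by $\alpha(|v-w|)/|v-w|^2$ so as to factor out exactly the weight that appears inside $R_{spherical}$. Writing $r = |v-w|$, this rewrites the coefficient $(\alpha'(r))^2/(2\alpha(r))$ as
\[
\frac{(\alpha'(r))^2}{2\alpha(r)} = \frac{r^2 (\alpha'(r))^2}{2\alpha(r)^2}\cdot \frac{\alpha(r)}{r^2}.
\]
The second factor matches the weight in $R_{spherical}$, while the first factor depends only on $r$ and is pointwise bounded above by $\sup_{r>0} \bigl(r^2 (\alpha'(r))^2/(2\alpha(r)^2)\bigr)$.

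I would then pull that supremum out of the integral and sum in $k=1,2,3$ to recognize what remains as $R_{spherical}$, as defined in \eqref{e:Rspherical}. This yields
\[
\sum_{k=1,2,3}\iint_{\R^6}\frac{(\alpha')^2}{2\alpha}\,F\,|\tilde b_k\cdot\nabla\log F|^2\dd w\dd v \;\leq\; \sup_{r>0}\!\left(\frac{r^2 \alpha'(r)^2}{2 \alpha(r)^2}\right) R_{spherical},
\]
and substituting this bound into the preceding identity gives the claim.

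There is no real obstacle here: the lemma is a bookkeeping step that isolates the dimensionless parameter $r|\alpha'|/\alpha$ governing the threshold in Theorem \ref{t:main}. The only thing to be slightly careful about is that $\alpha$ and $\alpha'$ are evaluated at $r = |v-w|$, so $\tilde b_k \cdot \nabla \alpha = 0$ and the factors of $\alpha$ commute with the differential operators $\tilde b_k \cdot \nabla$ — this justifies grouping $\alpha/|v-w|^2$ as a pure weight in $R_{spherical}$ and is the same observation that was used to pull $\alpha$ outside the differentiation in the definitions of $D_{parallel}$ and $D_{spherical}$. The real work, which is to dominate $R_{spherical}$ by $D_{spherical}$ via a log-Sobolev-type inequality on $S^2$, is deferred to Section \ref{s:logpoincare}.
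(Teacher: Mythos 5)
Your proposal is correct and matches the paper's argument: the lemma is indeed a direct consequence of the decomposition obtained from Lemma \ref{l:remainder} together with Lemma \ref{l:gradientdecomposition}, followed by the pointwise bound $\frac{(\alpha'(r))^2}{2\alpha(r)} \leq \sup_{r>0}\bigl(\tfrac{r^2\alpha'(r)^2}{2\alpha(r)^2}\bigr)\,\frac{\alpha(r)}{r^2}$ and the nonnegativity of the integrand in $R_{spherical}$. Nothing is missing.
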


\begin{proof}
This is a direct consequence of Lemma \ref{l:remainder} in terms of the new notation and using that the integrand in $R_{spherical}$ is nonnegative.
\end{proof}

In order to prove Theorem \ref{t:main}, we want to control the positive term $R_{spherical}$ with the negative terms $-D_{parallel} -D_{radial} -D_{spherical}$. We are going to use only $D_{spherical}$, that involves second derivatives in the directions $\tilde b_i$.

Recall that starting from any point $(v,w) \in \R^6$, the flow of the vector fields $b_1$, $b_2$ and $b_3$ stays within the sphere \eqref{e:sphere}. The inequality $D_{spherical} \geq \Lambda R_{spherical}$ will be deduced as a consequence of an elementary and apparently new inequality for functions on the sphere that we present in the next section.

\section{An inequality for functions on the sphere}
\label{s:logpoincare}

The objective of this section is to prove the following lemma, which will be crucial for the proof of our main result. It is a Poincar\'e-like inequality involving the derivatives of the logarithm of a function on the sphere $S^2$. Properly interpreted (see Remark \ref{r:intrinsic} below), it corresponds to some form of the $\Gamma_2$-criterion of Bakry and Emery (see \cite[Proposition 5.7.3]{bakry-2014}) on the projective space. The lemma is about symmetric functions on $S^2$, which is the two-dimensional sphere in $\R^3$. Equivalently, it is a result about functions on the projective space $\mathbb RP^2$. The value of the constant (which is 19/4 below) is not optimal. A more precise value for this constant, as well as a generalization to higher dimensions, is obtained by Sehyun Ji in \cite{sehyun2024}.

\begin{lemma} \label{l:logpoincare}
Let $f : S^2 \to (0,\infty)$ be a $C^2$ function on the sphere. Assume that $f(\sigma) = f(-\sigma)$. The following inequality holds
\begin{align}\label{e:logpoincare} 
\sum_{i,j = 1,2,3} \int_{S^2} f (b_i \cdot \nabla (b_j \cdot \nabla \log f))^2 \dd \sigma \geq \frac{19}4 \sum_{i = 1,2,3} \int_{S^2} f (b_i \cdot \nabla \log f)^2 \dd \sigma.
\end{align}
\end{lemma}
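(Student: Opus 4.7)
\medskip

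\noindent\textbf{Proof plan for Lemma \ref{l:logpoincare}.}
Write $u=\log f$, $N_k=b_k u$, and $M_{ij}=b_i b_j u$. The plan is to first reduce the inequality to a cleaner statement involving the intrinsic Hessian of $u$ on $S^2$, and then exploit the antipodal symmetry of $f$ via spectral information on the sphere.

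\medskip

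\noindent\emph{Step 1 (symmetric/antisymmetric split).} Decompose $M_{ij}=H_{ij}+A_{ij}$ with $H_{ij}=\tfrac12(M_{ij}+M_{ji})$. The commutation relations $[b_i,b_j]=-\epsilon_{ijk}b_k$ (which hold on $S^2$, being the Lie algebra relations for the rotation generators, the analogue on $\mathbb R^3$ of the identities $[\tilde b_i,\tilde b_j]=-2\epsilon_{ijk}\tilde b_k$ used in Section~\ref{s:alternatives}) give $A_{ij}=-\tfrac12\epsilon_{ijk}N_k$. The identity $\sum_{i,j}\epsilon_{ijk}\epsilon_{ijl}=2\delta_{kl}$ then yields $\sum_{i,j}A_{ij}^2=\tfrac12|N|^2$, and since $H$ and $A$ are orthogonal we get $\sum_{i,j}M_{ij}^2=|H|^2+\tfrac12|N|^2$. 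The target inequality reduces to $\int f|H|^2\,d\sigma\geq \tfrac{17}{4}\int f|N|^2\,d\sigma$.

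\medskip

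\noindent\emph{Step 2 (integration by parts).} Using that each $b_i$ is divergence-free on $S^2$, that $b_i f=fN_i$, and the Casimir identity $\sum_i b_i^2=\Delta_{S^2}$ (which also guarantees $\Delta_{S^2}$ commutes with each $b_j$), integrate by parts in the direction $b_i$ twice to obtain the explicit formula
\[
\sum_{i,j}\int f M_{ij}^2 \,d\sigma \;=\; \tfrac12\int f|N|^4\,d\sigma + \tfrac32\int f|N|^2\Delta u\,d\sigma + \int f(\Delta u)^2\,d\sigma.
\]
The middle step uses $\sum_{ij}N_iN_jM_{ij}=\langle HN,N\rangle=\tfrac12\sum_k N_k b_k|N|^2$.

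\medskip

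\noindent\emph{Step 3 (Bochner reformulation).} Apply the Bochner identity on $S^2$ (with $\mathrm{Ric}\equiv 1$), $\tfrac12\Delta|\nabla u|^2=|\mathrm{Hess}\,u|^2+\nabla u\cdot\nabla\Delta u+|\nabla u|^2$, integrate it against $fd\sigma$, and use $\Delta f=f(|N|^2+\Delta u)$ together with the integration-by-parts $\int f\nabla u\cdot\nabla\Delta u=-\int f|N|^2\Delta u-\int f(\Delta u)^2$. The result is the clean identity
\[
\sum_{i,j}\int f (b_ib_j u)^2\,d\sigma \;=\; \int f|\mathrm{Hess}_{S^2} u|^2\,d\sigma + \int f|N|^2\,d\sigma,
\]
so the lemma reduces to the sharper-looking inequality
\[
\int_{S^2} f\,|\mathrm{Hess}_{S^2}(\log f)|^2\,d\sigma \;\geq\; \tfrac{15}{4}\int_{S^2} f\,|\nabla_{S^2}\log f|^2\,d\sigma.
\]

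\medskip

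\noindent\emph{Step 4 (antipodal symmetry and spectral bound).} Use the dimension-two identity $|\mathrm{Hess}\,u|^2=|\mathrm{Hess}_0\,u|^2+\tfrac12(\Delta u)^2$ (trace-free plus trace), combined with the constraint $\int f\Delta u=-\int f|N|^2$ arising from integrating $\Delta\log f$. The antipodal symmetry of $f$ is essential: it forces $\log f$ to lie in the span of even spherical harmonics, where the lowest eigenvalue of $-\Delta_{S^2}$ acting on mean-zero functions is $6$ (attained at $l=2$), whereas without the symmetry the effective gap would only be $2$. In the linearized regime $f=c+\epsilon\phi$, this spectral gap translates into $\int|\mathrm{Hess}\,\phi|^2\geq 5\int|\nabla\phi|^2$ (since the eigenvalue ratio is $l^2+l-1\geq 5$ for even $l\geq 2$), giving margin above $15/4$. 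The passage to the genuinely nonlinear inequality is the hard part: one must couple this linear spectral bound with the weighted Cauchy--Schwarz $\int f(\Delta u)^2\geq (\int f|N|^2)^2/\int f$ to control the error terms from the nonlinearity of $u\mapsto e^u$.

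\medskip

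\noindent\emph{Main obstacle.} Step~4 is where the analysis is most delicate and where the specific numerical constant $\tfrac{19}{4}$ (equivalently, the threshold $\sqrt{19}$ in Theorem~\ref{t:main}) is determined. A pure Cauchy--Schwarz bound on $\int f(\Delta u)^2$ only succeeds when the Fisher information on the sphere is large, while a pure spectral argument only works in the small-Fisher-information regime; bridging the two with the constant $\tfrac{15}{4}$ requires the antipodal symmetry, and the non-sharpness of the constant (as confirmed by Sehyun Ji's improvement to $\sqrt{22}$ in \cite{sehyun2024}) reflects the inevitable slack introduced by this bridging step.
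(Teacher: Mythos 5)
Your Steps 1--3 are correct: the symmetric/antisymmetric splitting with $\|N\|^2=\tfrac12|\nabla_\sigma\log f|^2$ matches Lemma \ref{l:antisymmetric-commutators}, your Step 2 identity is a rearrangement of the weighted Bochner formula and checks out, and the resulting reduction of \eqref{e:logpoincare} to $\int_{S^2} f\,|\mathrm{Hess}_{S^2}\log f|^2\dd\sigma \geq \tfrac{15}{4}\int_{S^2} f\,|\nabla_\sigma\log f|^2\dd\sigma$ is exactly equivalent to Lemma \ref{l:symmetricinequality} of the paper (see Remark \ref{r:intrinsic}). But that reduced inequality is where all the difficulty and the constant live, and your Step 4 does not prove it: you verify only the linearization $f=c+\eps\phi$ (ratio $l^2+l-1\geq 5$ for even harmonics) and then state that one ``must couple'' this with the weighted Cauchy--Schwarz bound on $\int f(\Delta u)^2$, without exhibiting any argument that the two regimes can be bridged, let alone with constant $\tfrac{15}{4}$. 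A linearized spectral gap plus a crude Cauchy--Schwarz does not, by itself, yield a nonlinear log-Sobolev--type inequality of this kind; this is a genuine gap, which you acknowledge yourself in the ``Main obstacle'' paragraph.

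For comparison, the paper handles the nonlinearity not by interpolating between regimes but by passing from $\log f$ to $\sqrt f$: for each fixed direction $b_e=\sigma\times e$ one integrates by parts the cross term so that $\int f\,(b_e\cdot\nabla(b_e\cdot\nabla\log f))^2\dd\sigma \geq 4\int (b_e\cdot\nabla(b_e\cdot\nabla\sqrt f))^2\dd\sigma$ (the discarded term is the nonnegative quartic $\tfrac13(b_e\cdot\nabla\sqrt f)^4/f$, Lemma \ref{l:singledirectioninequality}); averaging over $e\in S^2$ recovers the full symmetric part plus a trace term (Lemma \ref{l:alldirections2nd}), the trace term is absorbed using that the matrix has rank at most $2$ (Lemma \ref{l:linearalgebra}), the identity of Lemma \ref{l:laplaciansquared} converts $|\trace M_{\sqrt f}|^2$ back into second derivatives, and finally the spectral gap $6$ is applied to the even, mean-zero functions $b_j\cdot\nabla\sqrt f$ (Lemma \ref{l:thirdeigenvalue}), giving $\tfrac{18}{4}-\tfrac14=\tfrac{17}{4}$ and hence $\tfrac{19}{4}$. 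Some such mechanism converting the $f$-weighted second-order quantity into an unweighted one for $\sqrt f$ (or an equivalent $\Gamma_2$-type argument) is precisely what is missing from your plan.
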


\begin{remark}
Note that for $\sigma \in S^2$, the vector fields $b_1(\sigma)$, $b_2(\sigma)$, and $b_3(\sigma)$ are perpendicular to $\sigma$ and are therefore vectors on the tangent space of $S^2$ at every point $\sigma \in S^2$. It is useful to compute their Lie brackets.
\begin{equation} \label{e:commutatorsb}
[b_1,b_2] = -b_3, \qquad [b_2,b_3] = -b_1, \qquad [b_3,b_1] = -b_2.
\end{equation}

\end{remark}

\begin{remark}
For functions $\phi,\varphi \in C^1(S^2)$ their gradients $\nabla_\sigma \phi$ and $\nabla_\sigma \varphi$ at a $\sigma \in S^2$ are elements of the tangent plane to $S^2$. This makes them vectors in $\R^3$ perpendicular to $\sigma$. The directional derivative $b_i \cdot \nabla \varphi$ is an intrinsic differentiation of the function $\varphi$ on $S^2$. Both vectors $b_i(\sigma)$ and $\nabla_\sigma \varphi$ belong to the tangent space of $S^2$ at $\sigma$. Moreover, one can check that the $b_i's$ are also divergence free as vector fields on $S^2$. We omit the subindex $\sigma$ for $\nabla = \nabla_\sigma$ in most of this section.

Applying the decomposition \eqref{e:decompositionofaij}, we observe that $b_1 \otimes b_1 + b_2 \otimes b_2 + b_3 \otimes b_3$ is the orthogonal projector matrix to the tangent space to $S^2$ at $\sigma$. In particular, it is the identity when applied to vectors on that tangent space. We derive the following elementary but useful identity
\begin{align}\label{e:decompositionofaij_applied_to_S2}
  \sum\limits_{i=1,2,3}(b_i(\sigma )\cdot \nabla \phi)(b_i(\sigma)\cdot \nabla \varphi) = \nabla_\sigma \phi\cdot \nabla_\sigma \varphi \;\;\forall \phi,\varphi\in C^1(S^2).
\end{align}
Another useful identity that follows from this, and that we record here for later use relates the Laplace-Beltrami operator on $S^2$ with second derivatives in the directions $b_i$.
\begin{align}\label{e:laplacebeltrami}
  \sum \limits_{i=1,2,3}(b_i\cdot \nabla (b_i\cdot \nabla f)) = \Delta_\sigma f\;\;\forall\;f\in C^2(S^2).
\end{align}
To see this, we multiply the left hand side by $\varphi \in S^2$ and integrate by parts over $S^2$ using that the $b_i's$ are divergence free. We use \eqref{e:decompositionofaij_applied_to_S2} to obtain
\begin{align*}
  \sum \limits_{i=1,2,3}\int_{S^2}(b_i\cdot \nabla (b_i\cdot \nabla f))\varphi \dd \sigma = -\int_{S^2} \nabla_\sigma f\cdot \nabla_\sigma \varphi \dd \sigma.
\end{align*}
Integrating by parts on the second integral, it follows that
\begin{align*}
  \sum \limits_{i=1,2,3}\int_{S^2}(b_i\cdot \nabla (b_i\cdot \nabla f))\varphi \dd \sigma = \int_{S^2} \Delta_\sigma f \varphi\dd \sigma.
\end{align*}
Since $\varphi$ is an arbitrary function in $C^2(S^2)$, we have proved \eqref{e:laplacebeltrami}.

\end{remark}
The starting point for the proof of Lemma \ref{l:logpoincare} is to express both sides of \ref{e:logpoincare} in terms of $\sqrt f$, motivated by the well known observation that the Fisher information of $f$ is four times the $\dot H^1$ norm of $\sqrt f$. Once this is done, the lemma will follow from the Poincar\'e inequality on $S^2$ applied to each $(b_j \cdot \nabla_\sigma \sqrt f)$. 

It is easy to write the right hand side of \eqref{e:logpoincare} in terms of $\sqrt{f}$.  We have (using \eqref{e:decompositionofaij_applied_to_S2})
\begin{equation} \label{e:rhssquareroot}
\sum_{i = 1,2,3} \int_{S^2} f (b_i \cdot \nabla \log f)^2 \dd \sigma = 4 \sum_{i = 1,2,3} \int_{S^2} (b_i \cdot \nabla \sqrt f)^2 \dd \sigma = 4\int_{S^2}|\nabla_\sigma \sqrt{f}|^2\dd \sigma.
\end{equation}
As for the left hand side of \eqref{e:logpoincare}, for the terms with $i=j$ there is a simple trick to obtain a clean bound in terms of a respective integral involving $\sqrt f$. Indeed, consider the case $i=j$. Writing $b = b_i$, we see that
\begin{align}
\int_{S^2} \left( b \cdot \nabla (b \cdot \nabla \log f) \right)^2 f \dd \sigma &= 4 \int_{S^2} \left( b \cdot \nabla (b \cdot \nabla \sqrt f) \right)^2 \nonumber \\
&\qquad - 2 \frac{\left( b \cdot \nabla (b \cdot \nabla \sqrt f) \right) (b \cdot \nabla \sqrt f)^2}{\sqrt f} \nonumber \\
&\qquad + \frac{(b \cdot \nabla \sqrt f)^4}f \dd \sigma. \nonumber
\intertext{Integrating by parts the middle term, we arrive at}
\int_{S^2} \left( b \cdot \nabla (b \cdot \nabla \log f) \right)^2 f \dd \sigma &=4 \int_{S^2} \left( b \cdot \nabla (b \cdot \nabla \sqrt f) \right)^2 + \frac 13 \frac{(b \cdot \nabla \sqrt f)^4}f \dd \sigma \nonumber \\
& \geq 4 \int_{S^2} \left( b \cdot \nabla (b \cdot \nabla \sqrt f) \right)^2 \dd \sigma \label{e:trick_pure_second_derivatives}.
\end{align}

We can repeat this trick for all the terms when $i=j$, but it will fall short of producing a useful expression for the terms with $i \neq j$. To overcome this, we will consider pure second derivatives in all possible directions below. Let us first introduce, for a given $g \in C^2(S^2)$, the matrices 
\begin{equation} \label{e:MN}
(M_g)_{ij} = \frac {b_i \cdot \nabla (b_j \cdot \nabla g) + b_j \cdot \nabla (b_i \cdot \nabla g)} 2, \qquad  (N_g)_{ij} = \frac {b_i \cdot \nabla (b_j \cdot \nabla g) - b_j \cdot \nabla (b_i \cdot \nabla g)} 2.
\end{equation}
That is, $M_g(\sigma)$ and $N_g(\sigma)$ are the symmetric and antisymmetric parts of the matrix $(b_i \cdot \nabla(b_j \cdot \nabla g))_{ij}$. Note that $M_g(\sigma)$ and $N_g(\sigma)$ are $3\times 3$ matrices, corresponding to the fact that we are working with three vector fields $b_1,b_2,b_3$. Moreover, the squared norms of $M_g$ and $N_\sigma$ are of direct interest to us since
\begin{align*}
  \sum \limits_{i,j=1,2,3}(b_i \cdot \nabla_\sigma (b_j \cdot \nabla_\sigma \log f))_{ij}^2	= \sum \limits_{i,j=1,2,3} (M_{\log f})_{ij}^2+(N_{\log f})_{ij}^2 = \|M_{\log f}(\sigma)\|^2+\|N_{\log f}(\sigma)\|^2.
\end{align*}	
Therefore, we want to understand the integrals of $\|M_{\log f}(\sigma)\|^2f$ and $\|N_{\log f}(\sigma)\|^2f$. Analyzing the latter is straightforward since the components of $N_g(\sigma)$ are given by the derivatives $(b_i\cdot\nabla g)$.
\begin{lemma} \label{l:antisymmetric-commutators}
Let $g : S^2 \to \R$ be any $C^2$ function and $N_g(\sigma)$ as in \eqref{e:MN}, then
\[ \|N_g(\sigma)\|^2 = \frac 12 \sum_{i=1,2,3} (b_i \cdot \nabla g)^2.\]
\end{lemma}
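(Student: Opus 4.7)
The plan is to simply compute the entries of $N_g$ directly using the Lie bracket identity \eqref{e:liebracket} and the explicit commutator relations \eqref{e:commutatorsb}.

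First I would observe that by definition of $N_g$ and the identity \eqref{e:liebracket},
\[ (N_g)_{ij} = \tfrac{1}{2}\bigl( b_i \cdot \nabla(b_j \cdot \nabla g) - b_j \cdot \nabla(b_i \cdot \nabla g) \bigr) = \tfrac{1}{2} [b_i, b_j] \cdot \nabla g. \]
In particular the diagonal entries vanish, and $N_g$ is manifestly antisymmetric, so it has only three independent entries: $(N_g)_{12}$, $(N_g)_{23}$, $(N_g)_{31}$.

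Next I would substitute the commutator relations from \eqref{e:commutatorsb}, namely $[b_1,b_2] = -b_3$, $[b_2,b_3] = -b_1$, $[b_3,b_1] = -b_2$, to obtain
\[ (N_g)_{12} = -\tfrac{1}{2}(b_3 \cdot \nabla g), \qquad (N_g)_{23} = -\tfrac{1}{2}(b_1 \cdot \nabla g), \qquad (N_g)_{31} = -\tfrac{1}{2}(b_2 \cdot \nabla g). \]

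Finally I would add the squares of all nine entries of $N_g$, using antisymmetry to double-count the three independent entries above:
\[ \|N_g(\sigma)\|^2 = 2 \bigl( (N_g)_{12}^2 + (N_g)_{23}^2 + (N_g)_{31}^2 \bigr) = \tfrac{1}{2} \sum_{i=1,2,3} (b_i \cdot \nabla g)^2, \]
which is the claimed identity.

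There is no real obstacle here; the lemma is a purely algebraic consequence of the structure constants of the Lie algebra spanned by $b_1,b_2,b_3$ (which is isomorphic to $\mathfrak{so}(3)$). The only thing to be careful about is bookkeeping the signs in the commutators and remembering the factor of $2$ coming from antisymmetry when taking the Frobenius norm.
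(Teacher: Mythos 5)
Your proof is correct and follows essentially the same route as the paper's: identify $(N_g)_{ij}=\tfrac12[b_i,b_j]\cdot\nabla g$, substitute the commutator relations \eqref{e:commutatorsb}, and sum the squared entries with the factor of two from antisymmetry. Nothing further is needed.
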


\begin{proof}
The entries $(N_g)_{ij}$ correspond to the Lie brackets of $b_1,b_2,b_3$, $(N_g)_{ij} = \tfrac{1}{2}[b_i,b_j]$. Thus, 
\begin{align*}
\|N\|^2 &= \sum_{i,j = 1,2,3} N_{ij}^2 = \frac 14 \sum_{i,j = 1,2,3} ([b_i,b_j] \cdot \nabla g)^2, \\
\intertext{Note that each nonzero term appears twice in the sum. Using \eqref{e:commutatorsb} we get,}
&= \frac 12 \sum_{i = 1,2,3} (b_i \cdot \nabla g)^2.
\end{align*}
\end{proof}

Lemma \ref{l:antisymmetric-commutators} already gives us a fraction of the inequality of Lemma \ref{l:logpoincare}. Indeed, by Lemma \ref{l:antisymmetric-commutators}, we get
\begin{align}
\sum_{i,j = 1,2,3} \int_{S^2}  (b_i \cdot \nabla (b_j \cdot \nabla \log f))^2f \dd \sigma &= \int_{S^2} \|M_{\log f}(\sigma)\|^2 f(\sigma) \dd \sigma + \frac 12 \int_{S^2} \sum_{i=1,2,3} (b_i \cdot \nabla \log f)^2 f(\sigma) \dd \sigma. \nonumber
\end{align}


%

To obtain our desired inequality \eqref{e:logpoincare} it remains to understand the integral of $\|M_{\log f}\|^2f$. Recall that a symmetric matrix $A \in \R^{d \times d}$ is uniquely determined by the quadratic form $\langle Ae, e\rangle$. Thus, we may try to estimate its norm $\|A\|^2$ in terms of the values of $\langle Ae, e\rangle^2$ as $e$ ranges over all vectors in $S^2$. Likewise, in order to compute $\|M_{\log f}(\sigma)\|^2$, we may average the values of second derivatives of $\log f$ along all possible directions and properly normalize it. We need a practical way to determine all possible directional derivatives from a point $\sigma \in S^2$. For any $e \in S^2$, we define the vector field $b_e$ using the cross product.
\[ b_e(\sigma) = \sigma \times e.\]
Note that $b_{e_1}$, $b_{e_2}$ and $b_{e_3}$ conveniently correspond to $b_1$, $b_2$ and $b_3$ from \eqref{e:bk}. Using these vector fields we want to recover the quantities on the left and right hand sides of Lemma \ref{l:logpoincare}. It is easy for the right-hand side using the next lemma.

\begin{lemma} \label{l:alldirections1st}
For any $C^1$ function $g :S^2 \to \R$, the following equality holds at every point $\sigma \in S^2$.
\[ \sum_{i = 1,2,3} (b_i \cdot \nabla g(\sigma))^2 = \frac3{\omega_2} \int_{e \in S^2} (b_e \cdot \nabla g(\sigma))^2 \dd e. \]
\end{lemma}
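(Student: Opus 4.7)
The plan is to show that both sides of the claimed identity are equal to $|\nabla_\sigma g(\sigma)|^2$, and therefore to each other.

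First I would analyze the right-hand side. Write $v := \nabla g(\sigma) \in \R^3$, which is tangent to $S^2$ at $\sigma$ (so $v \perp \sigma$). Using the scalar triple product,
\[
  b_e \cdot \nabla g(\sigma) = (\sigma \times e) \cdot v = (v \times \sigma) \cdot e.
\]
Set $u := v \times \sigma$. Then $(b_e \cdot \nabla g(\sigma))^2 = (u \cdot e)^2$, and the standard second-moment identity on $S^2$, namely $\int_{S^2} e_i e_j \dd e = \frac{\omega_2}{3} \delta_{ij}$, yields
\[
  \int_{S^2} (b_e \cdot \nabla g(\sigma))^2 \dd e = u_i u_j \int_{S^2} e_i e_j \dd e = \frac{\omega_2}{3} |u|^2.
\]
Since $v \perp \sigma$ and $|\sigma|=1$, we have $|u| = |v \times \sigma| = |v|$, so the right-hand side of the stated equality equals $|v|^2 = |\nabla_\sigma g(\sigma)|^2$.

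For the left-hand side, the cleanest route is to apply identity \eqref{e:decompositionofaij_applied_to_S2} with $\phi = \varphi = g$:
\[
  \sum_{i=1,2,3} (b_i \cdot \nabla g(\sigma))^2 = |\nabla_\sigma g(\sigma)|^2.
\]
If one prefers a direct verification that parallels the computation above, note that $b_i(\sigma) = \sigma \times e_i$, so $b_i \cdot \nabla g(\sigma) = (v \times \sigma) \cdot e_i = u_i$, and hence $\sum_i (b_i \cdot \nabla g)^2 = |u|^2 = |v|^2$. Either way, both sides equal $|\nabla_\sigma g(\sigma)|^2$, proving the lemma.

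There is no real obstacle here; the only point to keep track of is that the tangency of $\nabla_\sigma g$ to $S^2$ at $\sigma$ is what ensures $|v \times \sigma| = |v|$, so the $3/\omega_2$ normalization on the right-hand side exactly matches the sum on the left.
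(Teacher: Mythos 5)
Your proof is correct. The paper argues differently in form: it writes $e=(x_1,x_2,x_3)$, expands $b_e = x_1 b_1 + x_2 b_2 + x_3 b_3$, squares, and integrates in $e$, so that the mixed moments $\int_{S^2} x_i x_j \dd e$ vanish and the diagonal ones produce the factor $\omega_2/3$ in front of $\sum_i (b_i\cdot\nabla g)^2$; it never needs to identify the common value of the two sides. You instead exploit the cross-product structure: the triple-product identity $(\sigma\times e)\cdot \nabla g(\sigma) = (\nabla g(\sigma)\times\sigma)\cdot e$ turns the integrand into $(u\cdot e)^2$ with $u$ independent of $e$, the same second-moment identity $\int_{S^2} e_i e_j \dd e = \tfrac{\omega_2}{3}\delta_{ij}$ gives $\tfrac{\omega_2}{3}|u|^2$, and tangency of $\nabla_\sigma g$ gives $|u|=|\nabla_\sigma g|$, while the left-hand side is handled by the projection identity \eqref{e:decompositionofaij_applied_to_S2}. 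Both routes rest on exactly the same moment computation on $S^2$; yours has the small bonus of identifying both sides with $|\nabla_\sigma g(\sigma)|^2$ (a fact the paper uses separately in \eqref{e:rhssquareroot}), whereas the paper's expansion in the frame $b_1,b_2,b_3$ has the advantage of carrying over verbatim to the second-order statement, Lemma \ref{l:alldirections2nd}, where fourth moments appear and no reduction to a single squared gradient is available. One cosmetic remark: with the definition \eqref{e:bk} one actually has $b_i(\sigma)= e_i\times\sigma = -\,\sigma\times e_i$, so your identification $b_i(\sigma)=\sigma\times e_i$ is off by a sign; this is immaterial here since every occurrence is squared.
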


\begin{proof}
Note that here $\sigma \in S^2$ is a fixed point and $\nabla g(\sigma)$ is the same vector for all values of $e \in S^2$. Let us write $e = (x_1,x_2,x_3)$. We observe that $b_e = x_1 b_1 + x_2 b_2 + x_3 b_3$ so
\begin{align*}
\int_{e \in S^2} (b_e \cdot \nabla g(\sigma))^2 \dd e &= \int_{e \in S^2} (\sum_{i=1,2,3} x_i b_i \cdot \nabla g)^2 \dd e \\
 &= \int_{e \in S^2} \sum_{i,j=1,2,3} (x_i b_i \cdot \nabla g) (x_j b_j \cdot \nabla g) \dd e.
 \intertext{Note that $b_i = b_i(\sigma)$ and $\nabla g = \nabla g(\sigma)$ are independent of the variable of integration $e \in S^2$. Since $x_i x_j$ integrates to zero on $S^2$ unless $i = j$, we get}
 \int_{e \in S^2} (b_e \cdot \nabla g(\sigma))^2 \dd e  &= \int_{e \in S^2} \sum_{i=1,2,3} x_i^2 (b_i \cdot \nabla g)^2 \dd e \\
 &= \left( \int_{e \in S^2} x_1^2  \dd e\right) \sum_{i=1,2,3} (b_i \cdot \nabla g)^2 \\
 &= \frac{\omega_2}3 \sum_{i=1,2,3} (b_i \cdot \nabla g)^2. 
\end{align*}
\end{proof}

The following is the second order version of Lemma \ref{l:alldirections1st}. It is slightly more complicated.

\begin{lemma} \label{l:alldirections2nd}
For any $C^2$ function $g :S^2 \to \R$ and any $\sigma \in S^2$ we have the identity,
\[
2c_1 \|M_g(\sigma)\|^2 + c_1 |\trace (M_g(\sigma))|^2 = \int_{e \in S^2} (b_e(\sigma) \cdot \nabla (b_e(\sigma) \cdot \nabla g(\sigma)))^2 \dd e.
\]
Here, 
\[ c_1 = \int_{S^2} x_1^2 x_2^2 \dd S. \]
\end{lemma}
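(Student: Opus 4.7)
The plan is to write $b_e(\sigma) = x_1 b_1(\sigma) + x_2 b_2(\sigma) + x_3 b_3(\sigma)$ where $e = (x_1,x_2,x_3)$, and expand the integrand on the right-hand side as a polynomial in $x_1,x_2,x_3$ whose integral can be evaluated via the standard fourth-moment tensor on the sphere.

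First I would compute, using $b_e = \sum_i x_i b_i$ and the fact that $e$ is fixed while we differentiate with respect to $\sigma$,
\[
b_e \cdot \nabla (b_e \cdot \nabla g)(\sigma) = \sum_{i,j=1,2,3} x_i x_j \, b_j \cdot \nabla (b_i \cdot \nabla g)(\sigma).
\]
Since $x_i x_j$ is symmetric in $i,j$, the antisymmetric part $N_g$ of the matrix $(b_j \cdot \nabla (b_i \cdot \nabla g))_{ij}$ cancels in the sum and we are left with
\[
b_e \cdot \nabla (b_e \cdot \nabla g)(\sigma) = \sum_{i,j=1,2,3} x_i x_j (M_g)_{ij}(\sigma).
\]

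Next I would square and integrate. Setting $T_{ijkl} := \int_{S^2} x_i x_j x_k x_l \dd e$, we get
\[
\int_{S^2} (b_e \cdot \nabla (b_e \cdot \nabla g))^2 \dd e = \sum_{i,j,k,l} (M_g)_{ij} (M_g)_{kl} \, T_{ijkl}.
\]
By $O(3)$-invariance the tensor $T_{ijkl}$ must be a constant multiple of the unique symmetric isotropic $4$-tensor, i.e.
\[
T_{ijkl} = C\bigl(\delta_{ij}\delta_{kl} + \delta_{ik}\delta_{jl} + \delta_{il}\delta_{jk}\bigr),
\]
and evaluating at $i=k=1$, $j=l=2$ gives $C = \int_{S^2} x_1^2 x_2^2 \dd e = c_1$.

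Finally I would plug this into the sum. Since $M_g$ is symmetric, $\sum_{i,j,k,l} (M_g)_{ij}(M_g)_{kl}\delta_{ik}\delta_{jl} = \sum_{i,j,k,l} (M_g)_{ij}(M_g)_{kl}\delta_{il}\delta_{jk} = \|M_g\|^2$, and $\sum_{i,j,k,l} (M_g)_{ij}(M_g)_{kl}\delta_{ij}\delta_{kl} = (\trace M_g)^2$. Therefore
\[
\int_{S^2} (b_e \cdot \nabla (b_e \cdot \nabla g))^2 \dd e = c_1\bigl(2\|M_g\|^2 + (\trace M_g)^2\bigr),
\]
which is exactly the claimed identity. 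There is no real obstacle here: the only nontrivial ingredient is the evaluation of the isotropic tensor $T_{ijkl}$, and that is routine from $O(3)$-invariance together with a single scalar integral.
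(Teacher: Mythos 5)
Your proof is correct and follows essentially the same route as the paper: expand $b_e=\sum_i x_i b_i$ and evaluate the fourth moments $\int_{S^2}x_ix_jx_kx_l\,\dd e$, with $c_1=\int_{S^2}x_1^2x_2^2\,\dd e=\tfrac13\int_{S^2}x_1^4\,\dd e$. Your only (harmless) streamlining is to symmetrize first, replacing $b_j\cdot\nabla(b_i\cdot\nabla g)$ by $(M_g)_{ij}$ before integrating and invoking the isotropic $4$-tensor identity, whereas the paper enumerates the nonvanishing moment combinations and regroups at the end.
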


From the identity \ref{e:laplacebeltrami}, we point out that $\trace (M_g) = \Delta_\sigma g$.

\begin{proof} 
We write $e = (x_1,x_2,x_3)$ and expand like in the proof of Lemma \ref{l:alldirections1st}.
\begin{align*}
\int_{e \in S^2} (b_e \cdot \nabla (b_e \cdot \nabla g))^2 \dd e &= \int_{e \in S^2} \sum_{i,j,k,l=1,2,3} (x_i b_i \cdot \nabla (x_j b_j \cdot \nabla g)) (x_k b_k \cdot \nabla (x_l b_l \cdot \nabla g)) \dd e \\
 &= \sum_{i,j,k,l=1,2,3} \left( \int_{e \in S^2} x_i x_j x_k x_l \dd e\right)  (b_i \cdot \nabla (b_j \cdot \nabla g)) (b_k \cdot \nabla (b_l \cdot \nabla g)).
\end{align*}
The integral factors vanish save when $(i,j,k,l)$ is either four equal indices, or two pairs of equal indices. We are left with
\begin{align*}
\int_{e \in S^2} (b_e \cdot \nabla (b_e \cdot \nabla g))^2 \dd e &= \left( \int_{e \in S^2} x_1^4 \dd e\right) \sum_{i=1,2,3}  (b_i \cdot \nabla (b_i \cdot \nabla g))^2 \\
&\qquad + \left( \int_{e \in S^2} x_1^2 x_2^2 \dd e\right) \sum_{i\neq j}  (b_i \cdot \nabla (b_j \cdot \nabla g))^2 \\
&\qquad + \left( \int_{e \in S^2} x_1^2 x_2^2 \dd e\right) \sum_{i\neq j}  (b_i \cdot \nabla (b_j \cdot \nabla g)) (b_j \cdot \nabla (b_i \cdot \nabla g)) \\
&\qquad + \left( \int_{e \in S^2} x_1^2 x_2^2 \dd e\right) \sum_{i\neq j}  (b_i \cdot \nabla (b_i \cdot \nabla g)) (b_j \cdot \nabla (b_j \cdot \nabla g)).
\end{align*}
We define $c_1 > 0$ to be the (computable\footnote{It can be seen by an elementary computation that $c_1 = 4 \pi / 15$}) value of the integral in the last factor
\[ c_1 := \frac 13 \int_{e \in S^2} x_1^4 \dd e = \int_{e \in S^2} x_1^2 x_2^2 \dd e.\]
We continue with our computation,
\begin{align*}
\int_{e \in S^2} (b_e \cdot \nabla (b_e \cdot \nabla g))^2 \dd e &= 3c_1 \sum_{i=1,2,3}  (b_i \cdot \nabla (b_i \cdot \nabla g))^2 \\
&\qquad + c_1 \sum_{i\neq j}  (b_i \cdot \nabla (b_j \cdot \nabla g))^2 \\
&\qquad + c_1 \sum_{i\neq j}  (b_i \cdot \nabla (b_j \cdot \nabla g)) (b_j \cdot \nabla (b_i \cdot \nabla g)) \\
&\qquad + c_1 \sum_{i\neq j}  (b_i \cdot \nabla (b_i \cdot \nabla g)) (b_j \cdot \nabla (b_j \cdot \nabla g)) \\
&= c_1 \sum_{i, j = 1,2,3}  (b_i \cdot \nabla (b_j \cdot \nabla g))^2 \\
&\qquad + c_1 \sum_{i, j = 1,2,3}  (b_i \cdot \nabla (b_j \cdot \nabla g)) (b_j \cdot \nabla (b_i \cdot \nabla g)) \\
&\qquad + c_1 (\sum_{i=1,2,3} b_i \cdot \nabla (b_i \cdot \nabla g))^2 \\
&= 2c_1 \| M_{g}(\sigma) \|^2 + c_1 |\trace (M_g(\sigma))|^2.
\end{align*}
\end{proof}

\begin{lemma} \label{l:singledirectioninequality}
For any $C^2$ function $f:S^2 \to (0,\infty)$ and any $e \in S^2$ the following inequality holds.
\[ \int_{S^2} \left( b_e \cdot \nabla (b_e \cdot \nabla \log f) \right)^2 f \dd \sigma \geq 4 \int_{S^2} \left( b_e \cdot \nabla (b_e \cdot \nabla \sqrt f) \right)^2 \dd \sigma.\]
\end{lemma}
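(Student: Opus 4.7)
The plan is to adapt the integration-by-parts trick sketched in equation \eqref{e:trick_pure_second_derivatives} from the coordinate vector fields $b_i$ to the one-parameter family $b_e(\sigma) = \sigma \times e$. The only structural input needed is that $b_e$ is tangent to $S^2$ and divergence-free as a vector field on $S^2$. Tangency is immediate from $b_e(\sigma) \cdot \sigma = 0$, and divergence-freeness follows by choosing a rotation taking $e$ to $e_3$, under which $b_e$ is conjugated to $b_3$ (already noted to be divergence-free on $S^2$); equivalently, $b_e$ is the infinitesimal generator of the rotations of $S^2$ about the axis $e$, hence an infinitesimal isometry.

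With this in hand, I would set $h = \sqrt{f}$, use $\log f = 2\log h$, and introduce the shorthand $u := b_e \cdot \nabla h$ and $v := b_e \cdot \nabla u = b_e \cdot \nabla (b_e \cdot \nabla \sqrt{f})$. A direct chain-rule computation gives
\begin{align*}
  b_e \cdot \nabla(b_e \cdot \nabla \log f) = \frac{2v}{h} - \frac{2u^2}{h^2}.
\end{align*}
Squaring and multiplying by $f = h^2$ produces three terms: the clean square $4v^2$, a cross term $-8 v u^2/h$, and a non-negative quartic $4u^4/h^2$.

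The cross term is then integrated by parts using $\dv b_e = 0$ on $S^2$: writing it as $\int_{S^2}(b_e \cdot \nabla u)(u^2/h)\dd \sigma$, transferring the derivative onto $u^2/h$, and solving the resulting algebraic relation for the cross-term integral, one finds
\begin{align*}
  \int_{S^2} \frac{v u^2}{h}\dd \sigma = \frac{1}{3}\int_{S^2}\frac{u^4}{h^2}\dd\sigma.
\end{align*}
Substituting back yields the exact identity
\begin{align*}
  \int_{S^2}\bigl(b_e \cdot \nabla(b_e \cdot \nabla \log f)\bigr)^2 f\dd\sigma
  = 4\int_{S^2}(b_e \cdot \nabla(b_e \cdot \nabla \sqrt{f}))^2\dd\sigma
  + \frac{4}{3}\int_{S^2}\frac{(b_e\cdot\nabla\sqrt{f})^4}{f}\dd\sigma,
\end{align*}
and dropping the non-negative quartic remainder gives the desired inequality.

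I do not anticipate any real obstacle: the argument is essentially the diagonal case $i=j$ of \eqref{e:trick_pure_second_derivatives} applied along an arbitrary direction. The only point that merits care is the divergence-freeness of $b_e$ on $S^2$, which is what licenses the integration by parts; once reduced via rotational symmetry to one of the $b_k$, this is routine.
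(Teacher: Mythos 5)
Your proposal is correct and takes essentially the same route as the paper: the paper's proof of this lemma simply points back to the computation leading to \eqref{e:trick_pure_second_derivatives}, which is exactly the chain-rule expansion in terms of $\sqrt{f}$ followed by integration by parts of the cross term, justified—as you note—by the fact that $b_e$ generates rotations about the axis $e$ and is therefore a divergence-free tangent field on $S^2$. (Your remainder coefficient $\tfrac43$ is in fact the correct one—the intermediate coefficients displayed in the paper contain a harmless slip—but since the nonnegative remainder is dropped anyway, this has no bearing on the inequality.)
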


\begin{proof}
We have already proved this in the discussion leading to \eqref{e:trick_pure_second_derivatives}. In this lemma, we state the inequality for the vector fields $b_e$.
\end{proof}

\begin{lemma} \label{l:symroot}
For any $C^2$ function $f:S^2 \to (0,\infty)$ the following inequality holds.
\[ \int_{S^2} \left( 2\|M_{\log f}(\sigma)\|^2 + |\trace (M_{\log f}(\sigma))|^2 \right) \ f(\sigma) \dd \sigma \geq 4 \int_{S^2} 2\|M_{\sqrt{f}}(\sigma)\|^2 + |\trace (M_{\sqrt{f}}(\sigma))|^2 \dd \sigma.\]
\end{lemma}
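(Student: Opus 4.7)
The plan is to derive this lemma as a direct consequence of Lemma \ref{l:singledirectioninequality} and Lemma \ref{l:alldirections2nd}, by integrating the pointwise directional inequality over all directions $e \in S^2$ and swapping the order of integration.

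More precisely, I would start from the inequality
\[ \int_{S^2} \left( b_e \cdot \nabla (b_e \cdot \nabla \log f) \right)^2 f \dd \sigma \geq 4 \int_{S^2} \left( b_e \cdot \nabla (b_e \cdot \nabla \sqrt f) \right)^2 \dd \sigma, \]
which holds for every fixed $e \in S^2$ by Lemma \ref{l:singledirectioninequality}. Integrating both sides in $e$ over $S^2$ preserves the inequality, and by Fubini's theorem the order of integration in $\sigma$ and $e$ can be swapped on each side. This gives
\[ \int_{S^2}\left( \int_{e \in S^2}\left( b_e \cdot \nabla (b_e \cdot \nabla \log f) \right)^2 \dd e\right) f(\sigma) \dd \sigma \geq 4 \int_{S^2}\left( \int_{e \in S^2}\left( b_e \cdot \nabla (b_e \cdot \nabla \sqrt f) \right)^2 \dd e\right) \dd \sigma. \]

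Next, I would apply Lemma \ref{l:alldirections2nd} to the inner integrals in $e$ on each side, using it with $g = \log f$ on the left and $g = \sqrt f$ on the right. This substitutes each inner integral with $2c_1\|M_g(\sigma)\|^2 + c_1|\trace(M_g(\sigma))|^2$. The resulting inequality contains a common factor of $c_1>0$ on both sides, which can be divided out, yielding exactly the statement of Lemma \ref{l:symroot}.

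There is essentially no obstacle; the argument is a straightforward composition of the two preceding lemmas. The only thing to check is the validity of Fubini's theorem, which is immediate since $f \in C^2(S^2)$, so the integrands are continuous on the compact set $S^2 \times S^2$.
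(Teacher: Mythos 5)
Your proposal is correct and is essentially the paper's own argument: the paper likewise combines Lemma \ref{l:alldirections2nd} (applied pointwise to $g=\log f$ and $g=\sqrt f$) with the single-direction inequality of Lemma \ref{l:singledirectioninequality}, integrating over $e\in S^2$ and cancelling the common factor $c_1$. The only cosmetic difference is that the paper starts by rewriting the left-hand side via Lemma \ref{l:alldirections2nd} and then inserts the directional inequality, whereas you integrate the directional inequality first; these are the same proof.
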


\begin{proof}
We use Lemma \ref{l:alldirections2nd} to rewrite the left-hand side as follows,
\begin{align*}
\int_{S^2} \left( 2\|M_{\log f}(\sigma)\|^2 + |\trace (M_{\log f}(\sigma))|^2 \right) \ f(\sigma) \dd \sigma &= \frac 1{c_1} \int_{S^2} \int_{S^2}  \left( b_e \cdot \nabla(b_e \cdot \nabla \log f(\sigma)) \right)^2 \ f(\sigma) \dd e \dd \sigma.
\end{align*}
We apply Lemma \ref{l:singledirectioninequality} for each individual direction $e \in S^2$ and see the integral on the right is no smaller than
\begin{align*}
\frac 1{c_1} \int_{S^2} \int_{S^2}  \left( b_e \cdot \nabla(b_e \cdot \nabla \log f(\sigma)) \right)^2 \ f(\sigma) \dd e \dd \sigma \geq \frac 4{c_1} \int_{S^2} \int_{S^2}  \left( b_e \cdot \nabla(b_e \cdot \nabla \sqrt f) \right)^2 \dd \sigma \dd e.
\end{align*}
The desired inequality then follows by applying Lemma \ref{l:alldirections2nd} again to rewrite this last integral
\begin{align*}
\frac 4{c_1} \int_{S^2} \int_{S^2}  \left( b_e \cdot \nabla(b_e \cdot \nabla \sqrt f) \right)^2 \dd \sigma \dd e =  4 \int_{S^2}  2\|M_{\sqrt{f}}(\sigma)\|^2 + |\trace(M_{\sqrt{f}}(\sigma))|^2 \dd \sigma.
\end{align*}
Combining the last three displayed formulas, we conclude the inequality of the Lemma.
\end{proof}

Thanks to Lemma \ref{l:symroot}, one can express the integral of $\|M_{\log f}\|^2f$ in terms of that of $\|M_{\sqrt{f}}\|^2$ plus some extra terms involving their respective traces. The next two lemmas will use this to obtain a further inequality, one without the inconvenient trace terms.
\begin{lemma} \label{l:linearalgebra}
Let $A \in \R^{3 \times 3}$ and let $M$ and $N$ be its symmetric and anti-symmetric parts, that is
\[ M = \frac{A+A^T}2 \qquad N = \frac{A-A^T}2. \]
Assume that $\rank A \leq 2$, then
\[ \|M\|^2 \geq \frac 12 |\trace (M)|^2.\]
\end{lemma}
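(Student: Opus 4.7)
The plan is to use the rank condition to extract a spectral constraint on $M$, then reduce the inequality to an elementary statement about three real numbers.

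First, since $\rank A \leq 2$ in $\R^3$, we have $\det A = 0$, so there exists a unit vector $v$ with $Av = 0$. Because $N$ is antisymmetric, $\langle Nv, v\rangle = 0$, and therefore
\[
\langle Mv, v\rangle \;=\; \langle Av, v\rangle - \langle Nv, v\rangle \;=\; 0.
\]
Diagonalize $M$ in an orthonormal basis $e_1, e_2, e_3$ with eigenvalues $\lambda_1, \lambda_2, \lambda_3$, and write $v = \sum v_i e_i$ with $\sum v_i^2 = 1$. The identity above becomes $\sum_i v_i^2 \lambda_i = 0$, exhibiting $0$ as a convex combination of the $\lambda_i$. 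Consequently $\min_i \lambda_i \leq 0 \leq \max_i \lambda_i$.

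Since $\|M\|^2 = \sum_i \lambda_i^2$ and $\trace(M) = \sum_i \lambda_i$, the lemma reduces to the purely elementary claim: if $\lambda_1, \lambda_2, \lambda_3 \in \R$ satisfy $\min \leq 0 \leq \max$, then
\[
\lambda_1^2 + \lambda_2^2 + \lambda_3^2 \;\geq\; \tfrac{1}{2}(\lambda_1 + \lambda_2 + \lambda_3)^2,
\]
or equivalently $\sum_i \lambda_i^2 \geq 2\sum_{i<j} \lambda_i\lambda_j$. Order the eigenvalues $\lambda_1 \leq \lambda_2 \leq \lambda_3$, so $\lambda_1 \leq 0 \leq \lambda_3$. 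If $\lambda_2 \geq 0$, then $\lambda_1\lambda_2 \leq 0$ and $\lambda_1\lambda_3 \leq 0$, so $\sum_{i<j}\lambda_i\lambda_j \leq \lambda_2\lambda_3 \leq \tfrac{1}{2}(\lambda_2^2 + \lambda_3^2) \leq \tfrac{1}{2}\sum_i \lambda_i^2$ by AM-GM. If instead $\lambda_2 \leq 0$, the argument is symmetric, with the dominant term being $\lambda_1\lambda_2$ and the same AM-GM bound. In either case the desired inequality follows.

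The proof is fully elementary and I do not anticipate a real obstacle. The only point worth underlining is that the rank hypothesis is genuinely needed: the bound $\|M\|^2 \geq \tfrac{1}{2}|\trace(M)|^2$ fails for $M = I$ (both sides become $3$ vs $9/2$), so the constraint $\min_i \lambda_i \leq 0 \leq \max_i \lambda_i$ extracted from the kernel vector of $A$ is what makes the inequality go through, and equality will be achieved precisely when one eigenvalue vanishes and the other two are equal (e.g.\ $M = \mathrm{diag}(1,1,0)$).
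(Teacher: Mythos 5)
Your proof is correct. It hinges on the same key observation as the paper's proof: the kernel vector $v$ of $A$, combined with the antisymmetry of $N$ (so that $\langle Nv,v\rangle=0$ and $\trace N=0$), produces a unit vector at which the quadratic form of $M$ vanishes. The two arguments differ only in how this is exploited. The paper completes $v$ to an orthonormal basis, so that $M_{1,1}=\langle Mv,v\rangle=0$, and concludes in one line via $\|M\|^2\geq M_{2,2}^2+M_{3,3}^2\geq \tfrac12(M_{2,2}+M_{3,3})^2=\tfrac12|\trace M|^2$. You instead pass to the eigenbasis of $M$, keep only the weaker consequence that $0$ lies in the convex hull of the eigenvalues (i.e.\ $\min_i\lambda_i\leq 0\leq\max_i\lambda_i$), and finish with a two-case AM--GM argument; the discarded information is not needed, and both the equivalence $\sum_i\lambda_i^2\geq 2\sum_{i<j}\lambda_i\lambda_j$ and the case analysis check out, so your route is sound, just slightly longer than the paper's. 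Your sharpness remarks are also accurate: $M=I$ (from a rank-$3$ matrix $A$) violates the bound, and $\mathrm{diag}(1,1,0)$ attains equality.
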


\begin{proof}
The quantities $\|M\|$ and $\trace (A)$ (which is the same as $\trace M$) are invariant by orthonormal changes of coordinates. Since $\rank A \leq 2$, it has a zero eigenvector. We can pick an orthonormal basis that starts with this eigenvector, so that $A_{1,1} = 0$. In this basis,
\[ \|M\|^2 \geq A_{2,2}^2 + A_{3,3}^2 \geq \frac 12 (A_{2,2} + A_{3,3})^2 = \frac 12 |\trace (A)|^2 = \frac 12 |\trace (M)|^2.\]
\end{proof}

As we argue below, one can apply Lemma \ref{l:linearalgebra} to see that for every $\sigma \in S^2$, $2\|M(\sigma)\|^2 \geq |\trace M(\sigma)|^2$. We use this fact for the next lemma.

\begin{lemma} \label{l:lp2}
Let $f : S^2 \to (0,\infty)$ be a $C^2$ function. The following inequality holds,
\begin{align*}
\int_{S^2} \|M_{\log f}(\sigma)\|^2 \ f \dd \sigma \geq \int_{S^2} 2\|M_{\sqrt{f}}(\sigma)\|^2 + |\trace (M_{\sqrt{f}}(\sigma))|^2 \dd \sigma.
\end{align*}
\end{lemma}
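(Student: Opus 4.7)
The plan is to combine Lemma \ref{l:symroot} (which already gives the right-hand side in the shape $2\|M_{\sqrt f}\|^2 + |\trace M_{\sqrt f}|^2$) with a pointwise trace-vs-norm inequality for $M_{\log f}$ obtained from Lemma \ref{l:linearalgebra}. The result will follow from elementary algebra once the rank hypothesis of Lemma \ref{l:linearalgebra} is verified at each $\sigma \in S^2$.

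First I would introduce the \emph{un-symmetrized} matrix
\[ A_g(\sigma)_{ij} := b_i(\sigma)\cdot\nabla(b_j\cdot\nabla g)(\sigma), \]
so that $M_g(\sigma) = (A_g(\sigma)+A_g(\sigma)^T)/2$. The key observation is that $A_g(\sigma)$ has rank at most $2$. This is because $\sigma$ is a left null vector: since $b_i(\sigma) = \sigma\times e_i$, one has $\sum_i \sigma_i b_i(\sigma) = \sigma\times\sigma = 0$, and therefore
\[ \sum_i \sigma_i\, A_g(\sigma)_{ij} \;=\; \Bigl(\sum_i \sigma_i b_i(\sigma)\Bigr)\cdot \nabla(b_j\cdot\nabla g)(\sigma) \;=\; 0 \qquad \text{for every } j. \]
Applying Lemma \ref{l:linearalgebra} to $A = A_g(\sigma)$ (with $g=\log f$) gives the pointwise inequality
\[ \|M_{\log f}(\sigma)\|^2 \;\geq\; \tfrac{1}{2}\,|\trace M_{\log f}(\sigma)|^2 \qquad \text{for all } \sigma\in S^2. \]

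Next, I would multiply this pointwise inequality by the nonnegative weight $f(\sigma)$ and integrate over $S^2$, obtaining
\[ \int_{S^2} |\trace M_{\log f}|^2\, f\, \dd\sigma \;\leq\; 2\int_{S^2} \|M_{\log f}\|^2\, f\, \dd\sigma. \]
Plugging this into Lemma \ref{l:symroot} yields
\[ 4\int_{S^2}\|M_{\log f}\|^2 f\,\dd\sigma \;\geq\; \int_{S^2}\bigl(2\|M_{\log f}\|^2 + |\trace M_{\log f}|^2\bigr)f\,\dd\sigma \;\geq\; 4\int_{S^2}\bigl(2\|M_{\sqrt f}\|^2 + |\trace M_{\sqrt f}|^2\bigr)\dd\sigma, \]
and dividing by $4$ gives the claimed inequality.

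The only non-routine step is the verification that $A_g(\sigma)$ has rank $\leq 2$; this is the content of the vanishing identity $\sum_i \sigma_i b_i(\sigma)=0$, which should be recorded explicitly as it is the precise algebraic reason behind the lemma. Once that observation is in place the rest is a clean combination of Lemma \ref{l:linearalgebra} and Lemma \ref{l:symroot}.
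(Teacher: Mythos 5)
Your proof is correct and follows essentially the same route as the paper: apply Lemma \ref{l:linearalgebra} pointwise to $A_{ij}=b_i\cdot\nabla(b_j\cdot\nabla\log f)$ to get $2\|M_{\log f}\|^2\geq|\trace M_{\log f}|^2$, then combine with Lemma \ref{l:symroot}. Your explicit verification of the rank condition via the left null vector $\sigma$ (using $\sum_i\sigma_i b_i(\sigma)=0$, with the coefficients outside the differentiation) is exactly the linear-dependence argument the paper invokes, just spelled out more carefully.
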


\begin{proof}
The matrix $A(\sigma)$ given by $A_{ij}(\sigma) = (b_i \cdot \nabla(b_j \cdot \nabla \log f)$ is of rank at most two because the vectors $b_1$, $b_2$ and $b_3$ are linearly dependent at every $\sigma$.

From Lemma \ref{l:linearalgebra}, we deduce that $4\|M_{\log f}\|^2 \geq 2\|M_{\log f}\|^2 + |\trace (M_{\log f})|^2$ and then replace it on the left-hand side of Lemma \ref{l:symroot}.
\end{proof}

The inequality in Lemma \ref{l:lp2} is how we have managed to turn the trick in Lemma \ref{l:singledirectioninequality} into an inequality relating $M_{\log f}$ to $M_{\sqrt{f}}$ (plus an extra term). With \eqref{l:lp2} at hand, what remains in order to prove Lemma \ref{l:logpoincare} is using the integral involving $M_{\sqrt{f}}$ to bound the integrals of $(b_i\cdot \nabla \sqrt{f})^2$. 

The following lemma is the version for $M_g(\sigma)$ of the well-known fact that the integral of $|D^2 g|^2$ and $|\Delta g|^2$ coincide for any twice differentiable function $g: \mathbb T^d \to \R$.

\begin{lemma}\label{l:laplaciansquared}
Let $g : S^2 \to \R$ be any $C^2$ function. Then, the following identity holds.
\begin{align*}
\int_{S^2} \left( \sum_{i=1,2,3} b_i \cdot \nabla (b_i \cdot \nabla g) \right)^2 \dd \sigma &= \int_{S^2} \|M_g(\sigma)\|^2 \dd \sigma + \frac 12 \int_{S^2} \sum_{i=1,2,3} (b_i \cdot \nabla g)^2 \dd \sigma \\
&= \sum_{i,j = 1,2,3} \int_{S^2} (b_i \cdot \nabla (b_j \cdot \nabla g))^2 \dd \sigma.
\end{align*}
\end{lemma}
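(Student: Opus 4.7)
The strategy is to establish the two equalities separately. The second equality---that the middle expression equals the right-hand side---is in fact a pointwise identity that requires no integration. By construction, the matrix $A(\sigma)$ with entries $A_{ij}(\sigma):=b_i\cdot\nabla(b_j\cdot\nabla g)$ has symmetric part $M_g(\sigma)$ and antisymmetric part $N_g(\sigma)$, so $\sum_{i,j}A_{ij}^2=\|M_g\|^2+\|N_g\|^2$ pointwise. Combining this with Lemma~\ref{l:antisymmetric-commutators}, which gives $\|N_g\|^2=\tfrac{1}{2}\sum_i(b_i\cdot\nabla g)^2$, yields the second equality.

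For the first equality I would use \eqref{e:laplacebeltrami} to recognize that $\sum_i A_{ii}=\Delta_\sigma g$, so that the left-hand side equals $\sum_{i,k}\int_{S^2}A_{ii}A_{kk}\,\dd\sigma$. The key tool is the commutator identity
\[
\int_{S^2}(b_i\cdot\nabla p)(b_k\cdot\nabla q)\,\dd\sigma-\int_{S^2}(b_k\cdot\nabla p)(b_i\cdot\nabla q)\,\dd\sigma=-\int_{S^2}p\cdot[b_i,b_k]\cdot\nabla q\,\dd\sigma,
\]
valid for smooth $p,q:S^2\to\R$, which follows by integrating $b_i$ (or $b_k$) by parts---recall that the $b_i$ are divergence-free on $S^2$, so no boundary terms arise---and then using the definition of the Lie bracket. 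Applying this with $p=u_i:=b_i\cdot\nabla g$ and $q=u_k:=b_k\cdot\nabla g$ and summing over $i,k$ produces
\[
\int_{S^2}(\Delta_\sigma g)^2\,\dd\sigma=\int_{S^2}\mathrm{tr}(A^2)\,\dd\sigma-\sum_{i,k}\int_{S^2}u_i\cdot[b_i,b_k]\cdot\nabla u_k\,\dd\sigma.
\]
Since $\mathrm{tr}((M+N)^2)=\|M\|^2-\|N\|^2$ whenever $M$ is symmetric and $N$ antisymmetric, the first term on the right equals $\int_{S^2}(\|M_g\|^2-\|N_g\|^2)\,\dd\sigma$.

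The remaining task is to evaluate the commutator term. Using the relations \eqref{e:commutatorsb}, write $[b_i,b_k]=-\epsilon_{ikl}b_l$, so that $[b_i,b_k]\cdot\nabla u_k=-\epsilon_{ikl}A_{lk}$. Decompose $A_{lk}=(M_g)_{lk}+(N_g)_{lk}$: the symmetric part contributes nothing by the antisymmetry of $\epsilon_{ikl}$ in $(k,l)$, while the antisymmetric part satisfies $(N_g)_{lk}=-\tfrac{1}{2}\epsilon_{lkm}u_m$, and the contraction identity $\sum_{k,l}\epsilon_{ikl}\epsilon_{mkl}=2\delta_{im}$ then yields $\sum_{i,k}u_i\cdot[b_i,b_k]\cdot\nabla u_k=-\sum_i u_i^2$. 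Substituting back, and using $\|N_g\|^2=\tfrac{1}{2}\sum_i u_i^2$ one more time to reassemble the terms, produces the middle expression and completes the proof. The only delicate step is the $\epsilon$-contraction; everything else is routine integration by parts and linear algebra. The resulting identity can be recognized as a Bochner formula of the form $\int(\Delta g)^2=\int|D^2g|^2+\int|\nabla g|^2$ on $S^2$, the extra gradient term reflecting the constant Ricci curvature, but it can be verified by direct computation with the $b_i$'s without invoking any Riemannian machinery.
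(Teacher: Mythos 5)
Your proof is correct and takes essentially the same route as the paper: both prove the first identity by integration by parts on $S^2$ (using that the $b_i$ are divergence free) together with the commutation relations \eqref{e:commutatorsb}, and both obtain the second identity from Lemma \ref{l:antisymmetric-commutators}. The only difference is bookkeeping: the paper commutes and integrates by parts a second time, ending with the double bracket $[[b_i,b_j],b_j]=-b_i$, whereas you evaluate the bracket error term pointwise through $\trace(A^2)=\|M_g\|^2-\|N_g\|^2$ and the identification $(N_g)_{lk}=-\tfrac12\epsilon_{lkm}\,(b_m\cdot\nabla g)$ with a Levi-Civita contraction, which is a slightly cleaner organization of the same computation.
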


\begin{proof}
We integrate by parts starting from the left-hand side,
\begin{align*}
\int_{S^2} &\left( \sum_{i=1,2,3} b_i \cdot \nabla (b_i \cdot \nabla g) \right)^2 \dd \sigma = \sum_{i,j = 1,2,3} \int_{S^2} (b_i \cdot \nabla(b_i \cdot \nabla g)) (b_j \cdot \nabla(b_j \cdot \nabla g)) \dd \sigma \\
&= \sum_{i,j = 1,2,3} \int_{S^2} - (b_i \cdot \nabla g)  (b_i \cdot \nabla(b_j \cdot \nabla(b_j \cdot \nabla g)) \dd \sigma. \\
\intertext{We introduce commutators to switch the order of differentiation,}
&= \sum_{i,j = 1,2,3} \int_{S^2} - (b_i \cdot \nabla g)  (b_j \cdot \nabla(b_i \cdot \nabla(b_j \cdot \nabla g)) - (b_i \cdot \nabla g) \ ([b_i,b_j] \cdot \nabla(b_j \cdot \nabla g)) \dd \sigma. \\
\intertext{We integrate by parts again,}
&= \sum_{i,j = 1,2,3} \int_{S^2} (b_j \cdot \nabla(b_i \cdot \nabla g))  (b_i \cdot \nabla(b_j \cdot \nabla g)) - (b_i \cdot \nabla g) \ ([b_i,b_j] \cdot \nabla(b_j \cdot \nabla g)) \dd \sigma.\\
\intertext{We use that the antisymmetric part of the second derivatives correspond to differentiation along commutators $[b_i,b_j]$,} 
&= \sum_{i,j = 1,2,3} \int_{S^2} |(M_g)_{ij}|^2 - \frac {([b_i,b_j] \cdot \nabla g)^2}4  - (b_i \cdot \nabla g) \ ([b_i,b_j] \cdot \nabla(b_j \cdot \nabla g)) \dd \sigma. \\
\intertext{We apply another commutator to the second term and integrate by parts again,}
&= \sum_{i,j = 1,2,3} \int_{S^2} |(M_g)_{ij}|^2 - \frac {([b_i,b_j] \cdot \nabla g)^2}4\\
&\qquad\qquad\qquad - (b_i \cdot \nabla g) \ ([[b_i,b_j],b_j] \cdot \nabla g)) + b_j \cdot \nabla (b_i \cdot \nabla g) ([b_i,b_j] \cdot \nabla g) \dd \sigma.\\
\intertext{Note that $[[b_i,b_j],b_j] = -b_i$ whenever $i \neq j$. Moreover, exchanging $i$ with $j$ in the last term, the symmetric part of $b_j \cdot \nabla (b_i \cdot \nabla g)$ cancels out and we are left with,}
&= \sum_{i,j = 1,2,3} \int_{S^2} |(M_g)_{ij}|^2 - \frac {([b_i,b_j] \cdot \nabla g)^2}4 +\delta_{i \neq j} (b_i \cdot \nabla g)^2 - \frac {([b_i,b_j] \cdot \nabla g)^2} 2 \dd \sigma \\
&= \int_{S^2} \|M_g\|^2 \dd \sigma + \frac 12 \int_{S^2} \sum_{i=1,2,3} (b_i \cdot \nabla g)^2 \dd \sigma. 
\end{align*}
The second identity follows from Lemma \ref{l:antisymmetric-commutators}.
\end{proof}

The last ingredient needed for the proof of Lemma \ref{l:logpoincare} is a Poincar\'e inequality on the sphere. 
\begin{lemma} \label{l:thirdeigenvalue}
Let $g : S^2 \to \R$. Assume that $g(\sigma) = g(-\sigma)$ and $g$ has average zero on $S^2$. The following inequality holds
\begin{align*} 
\sum_{i = 1,2,3} \int_{S^2} (b_i \cdot \nabla_\sigma g)^2 \dd \sigma \geq 6 \int_{S^2} g^2 \dd \sigma.
\end{align*}
\end{lemma}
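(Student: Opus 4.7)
The plan is to reduce the left-hand side to the standard Dirichlet energy on $S^2$ and then invoke spectral decomposition in spherical harmonics, using the symmetry assumption $g(\sigma) = g(-\sigma)$ in a crucial way.

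First, I would apply the identity \eqref{e:decompositionofaij_applied_to_S2} (taking $\phi = \varphi = g$) to rewrite the sum on the left as the intrinsic Dirichlet energy on the sphere:
\[
\sum_{i=1,2,3} \int_{S^2} (b_i \cdot \nabla_\sigma g)^2 \, d\sigma = \int_{S^2} |\nabla_\sigma g|^2 \, d\sigma.
\]
So the claim reduces to the Poincaré-type inequality
\[
\int_{S^2} |\nabla_\sigma g|^2 \, d\sigma \geq 6 \int_{S^2} g^2 \, d\sigma
\]
under the hypotheses that $g$ is even on $S^2$ and has zero average.

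Next, I would decompose $g$ in spherical harmonics, $g = \sum_{\ell \geq 0} g_\ell$, where $g_\ell$ is the projection onto the eigenspace of $-\Delta_\sigma$ with eigenvalue $\ell(\ell+1)$. The assumption $g(\sigma) = g(-\sigma)$ forces the odd-degree components to vanish, since spherical harmonics of degree $\ell$ satisfy $Y_\ell(-\sigma) = (-1)^\ell Y_\ell(\sigma)$. The zero-mean assumption kills the $\ell = 0$ component. Therefore only terms with even $\ell \geq 2$ survive.

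Now Parseval's identity on $S^2$, together with integration by parts, gives
\[
\int_{S^2} |\nabla_\sigma g|^2 \, d\sigma = \sum_{\ell \geq 2,\, \ell \text{ even}} \ell(\ell+1) \|g_\ell\|_{L^2(S^2)}^2, \qquad \int_{S^2} g^2 \, d\sigma = \sum_{\ell \geq 2,\, \ell \text{ even}} \|g_\ell\|_{L^2(S^2)}^2.
\]
Since $\ell(\ell+1) \geq 6$ for every even $\ell \geq 2$, with equality at $\ell = 2$, comparing the two sums termwise yields the desired inequality with sharp constant $6$. There is no real obstacle in this proof; the only thing worth emphasizing is that the constant $6 = 2 \cdot 3$ is sharp and is achieved on degree-$2$ spherical harmonics (such as $\sigma \mapsto 3\sigma_1^2 - 1$), which is precisely why the symmetry hypothesis $g(\sigma) = g(-\sigma)$ is indispensable — without it, degree-$1$ harmonics would be allowed and the best constant would drop to $2$.
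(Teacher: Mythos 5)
Your proof is correct and follows essentially the same route as the paper: reduce the left-hand side to $\int_{S^2} |\nabla_\sigma g|^2 \dd \sigma$ via \eqref{e:decompositionofaij_applied_to_S2}, then note that the evenness and zero-mean hypotheses force $g$ to be orthogonal to the eigenspaces of $-\Delta_\sigma$ with eigenvalues $0$ and $2$, so the spectral gap $6$ (from degree-$2$ spherical harmonics) gives the inequality. Your spherical-harmonics expansion simply spells out in more detail the orthogonality argument the paper states directly.
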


\begin{proof}
The identity \eqref{e:decompositionofaij_applied_to_S2} says that $|\nabla_\sigma g|^2 = \sum \limits_{i=1,2,3}(b_i\cdot \nabla_\sigma g)^2$, so the desired inequality is equivalent to
\begin{align*}
\int_{S^2}  |\nabla_\sigma g|^2 \dd \sigma \geq 6 \int_{S^2} g^2 \dd \sigma.
\end{align*}
This inequality amounts to an elementary observation about the eigenvalues and eigenfunctions of $-\Delta_\sigma$, which are well understood. The eigenfunctions of $-\Delta_\sigma$ correspond to spherical harmonics. The first three eigenvalues are $0$, $2$ and $6$. They correspond to constant functions, first-order spherical harmonics, and second-order spherical harmonics. Since $g$ is average zero and $g(\sigma)=g(-\sigma)$, then it is orthogonal in $L^2(S^2)$ to the eigenspaces corresponding to the first and second eigenfunctions. We obtain the inequality of the lemma because the third eigenvalue of $-\Delta_\sigma$ equals $6$.
\end{proof}

\begin{lemma} \label{l:symmetricinequality}
Let $f:S^2 \to (0,\infty)$ be $C^2$ and even (i.e. $f(\sigma)=f(-\sigma)$), and $M_g$ as in \eqref{e:MN}. The following inequality holds.
\[ \int_{S^2} \|M_{\log f}(\sigma)\|^2 \ f \dd \sigma \geq \frac{17}4 \sum_{i=1,2,3} \int_{S^2} |b_i \cdot \nabla \log f|^2 \ f \dd \sigma.\]
\end{lemma}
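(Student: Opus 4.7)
The plan is to chain together Lemma \ref{l:lp2}, Lemma \ref{l:laplaciansquared}, and the sharp Poincar\'e inequality of Lemma \ref{l:thirdeigenvalue}, with $\sqrt f$ acting as the bridge between them. The entry point is the identity $(b_i \cdot \nabla \log f)^2 \, f = 4(b_i \cdot \nabla \sqrt f)^2$, which converts the target inequality into
\[ \int_{S^2} \|M_{\log f}\|^2 \, f \dd \sigma \geq 17 \sum_{i=1,2,3} \int_{S^2} (b_i \cdot \nabla \sqrt f)^2 \dd \sigma. \]
Lemma \ref{l:lp2} allows us to replace the left hand side by $\int_{S^2} \bigl( 2\|M_{\sqrt f}\|^2 + |\trace M_{\sqrt f}|^2 \bigr)\dd \sigma$, and then Lemma \ref{l:laplaciansquared} applied with $g = \sqrt f$ rewrites the trace term as $\int \|M_{\sqrt f}\|^2 + \tfrac12 \sum_i \int (b_i \cdot \nabla \sqrt f)^2$. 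Combining these, the remaining task reduces to the single inequality
\[ \int_{S^2} \|M_{\sqrt f}\|^2 \dd \sigma \geq \frac{11}{2} \sum_{i=1,2,3} \int_{S^2} (b_i \cdot \nabla \sqrt f)^2 \dd \sigma, \]
where the constant $11/2$ is exactly what is needed because $3 \cdot \tfrac{11}{2} + \tfrac12 = 17$.

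For this last inequality I would set $h_i := b_i \cdot \nabla \sqrt f$ and apply Lemma \ref{l:thirdeigenvalue} to each $h_i$. Three hypotheses need to be checked: $h_i$ is $C^2$ (immediate since $f > 0$ and $f \in C^2$); $h_i$ has mean zero on $S^2$ (immediate from $\int h_i \dd\sigma = -\int (\dv b_i) \sqrt f \dd\sigma = 0$, since $b_i$ is divergence-free on $S^2$); and $h_i$ is even. The last point is the only delicate one: since $f$ is even, $\sqrt f$ is even, so its intrinsic gradient satisfies $\nabla_\sigma \sqrt f(-\sigma) = -\nabla_\sigma \sqrt f(\sigma)$ under the antipodal identification of tangent spaces, and since $b_i(-\sigma) = (-\sigma) \times e_i = -b_i(\sigma)$, the two sign flips cancel and $h_i(-\sigma) = h_i(\sigma)$. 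Once Lemma \ref{l:thirdeigenvalue} applies, it gives $\sum_j \int (b_j \cdot \nabla h_i)^2 \geq 6 \int h_i^2$, and summing over $i$ and invoking Lemma \ref{l:laplaciansquared} once more on the left hand side yields
\[ \int_{S^2} \|M_{\sqrt f}\|^2 \dd\sigma + \frac12 \sum_{i} \int_{S^2} (b_i \cdot \nabla \sqrt f)^2 \dd\sigma \geq 6 \sum_{i} \int_{S^2} (b_i \cdot \nabla \sqrt f)^2 \dd\sigma, \]
which rearranges precisely to the desired $11/2$ inequality.

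The main obstacle is less a conceptual one than a quantitative tightness issue. The constant $17/4$ in the statement is the unique value obtained by chaining the three lemmas, and the tightness of that chain is driven by the third eigenvalue of $-\Delta_\sigma$ being exactly $6$ in Lemma \ref{l:thirdeigenvalue}. In particular, the evenness hypothesis on $f$ is indispensable: without it one would only have access to the second eigenvalue $2$, which would cut the Poincar\'e constant from $6$ to $2$ and collapse the resulting estimate well below $17/4$. Thus the only real trap in the argument is making sure each $h_i = b_i \cdot \nabla \sqrt f$ is genuinely even, which hinges on the compatible sign changes of $\sqrt f$ and the rotation generators $b_i$ under the antipodal map.
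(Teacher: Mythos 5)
Your argument is correct and follows essentially the same route as the paper: Lemma \ref{l:lp2}, then Lemma \ref{l:laplaciansquared} to handle the trace term, then the Poincar\'e inequality of Lemma \ref{l:thirdeigenvalue} applied to each $b_i\cdot\nabla\sqrt f$ (whose evenness and zero mean you verify correctly, and which the paper leaves implicit), with the same arithmetic $18/4-1/4=17/4$; isolating the intermediate bound $\int_{S^2}\|M_{\sqrt f}\|^2\dd\sigma \geq \tfrac{11}{2}\sum_i\int_{S^2}(b_i\cdot\nabla\sqrt f)^2\dd\sigma$ is only a cosmetic reorganization of the paper's bookkeeping (the paper instead converts $\|M_{\sqrt f}\|^2$ to full second derivatives via Lemma \ref{l:antisymmetric-commutators}). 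One nitpick: $b_i\cdot\nabla\sqrt f$ is only $C^1$, not $C^2$, when $f\in C^2$, but since Lemma \ref{l:thirdeigenvalue} only requires $H^1$ regularity this costs nothing.
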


\begin{proof}
We start from applying Lemma \ref{l:lp2}.
\begin{align*}
\int_{S^2} \|M_{\log f}(\sigma)\|^2 \ f \dd \sigma &\geq \int_{S^2} 2\|M_{\sqrt{f}}(\sigma)\|^2 + |\trace M_{\sqrt{f}}(\sigma)|^2 \dd \sigma \\
&= \int_{S^2} 2 \left( \sum_{i,j=1,2,3} (b_i \cdot \nabla (b_j \cdot \nabla \sqrt f))^2 \right) - 2 \| N_{\sqrt{f}}(\sigma)\|^2 + |\trace M_{\sqrt{f}}(\sigma)|^2 \dd \sigma. \\
\intertext{We use Lemma \ref{l:antisymmetric-commutators} to compute the middle term that involves $N_{\sqrt f}$,}
&= \int_{S^2} 2 \left( \sum_{i,j=1,2,3} (b_i \cdot \nabla (b_j \cdot \nabla \sqrt f))^2 \right) + |\trace M_{\sqrt{f}}(\sigma)|^2 \dd \sigma - \frac 14 \int_{S^2} \sum_{i=1,2,3} (b_i \cdot \nabla \log f)^2 f \dd \sigma. \\
\intertext{We apply Lemma \ref{l:laplaciansquared} to $g = \sqrt f$ to replace the second term,}
&= \int_{S^2} 3 \left( \sum_{i,j=1,2,3} (b_i \cdot \nabla (b_j \cdot \nabla \sqrt f))^2 \right) - \frac 14 \int_{S^2} \sum_{i=1,2,3} (b_i \cdot \nabla \log f)^2 f \dd \sigma. \\
\intertext{Applying Lemma \ref{l:thirdeigenvalue} to $g = b_j \cdot \nabla \sqrt f$,}
&\geq 18 \int_{S^2} \sum_{j=1,2,3}(b_j \cdot \nabla \sqrt f)^2 \dd \sigma - \frac 14 \int_{S^2} \sum_{i=1,2,3} (b_i \cdot \nabla \log f)^2 f \dd \sigma \\
&\geq (\frac{18}4 - \frac 14) \int_{S^2} \sum_{i=1,2,3} (b_i \cdot \nabla \log f)^2 f \dd \sigma.
\end{align*}
\end{proof}

\begin{proof} [Proof of Lemma \ref{l:logpoincare}]
We combine Lemma \ref{l:antisymmetric-commutators} with Lemma \ref{l:symmetricinequality}.
\end{proof}

\begin{remark} \label{r:intrinsic}
The quantities involved in this section correspond to intrinsic geometric objects on the sphere $S^2$. The identity \ref{e:laplacebeltrami} says that for any $C^2$ function $g : S^2 \to \R$,
\[ \sum_{i=1,2,3} b_i \cdot \nabla(b_i \cdot \nabla g) = \Delta_\sigma g.\]
It can be seen that other quantities that play a role in this section correspond to the following objects.
\begin{align*}
\|M_g\|^2 &= \|\nabla^2_\sigma g\|^2 + \frac 12 |\nabla_\sigma g|^2, \\
\|N_g\|^2 &= \frac{1}{2}\sum_{i=1,2,3} |b_i \cdot \nabla_\sigma g|^2 = \frac{1}{2}|\nabla_\sigma g|^2.
\end{align*}
Here $\nabla_\sigma$ and $\Delta_\sigma$ are respectively the gradient and the Laplace-Beltrami operators in $S^2$ with its standard metric. The proofs in this section can be written in terms of these intrinsic objects. We write it in terms of the vector fields $b_i$ to keep a uniform notation throughout the paper, and to keep the proof more elementary.

Moreover, the integrands in Lemma \ref{l:logpoincare} correspond to the \emph{carre-du-champ} operator $\Gamma$ and the operator $\Gamma_2$ in the Bakry-Emery formalism. In fact, from the definition of $\Gamma$ and $\Gamma_2$ one can check for any smooth function $g:S^2\to\R$
\begin{align*}
 \sum_{i,j = 1,2,3} (b_i \cdot \nabla g)^2 &= \Gamma(g,g), \\
 \sum_{i,j = 1,2,3} (b_i \cdot \nabla (b_j \cdot \nabla g))^2 &= \Gamma_2(g,g).
\end{align*}
One only needs to make use of the above expressions for $\nabla_\sigma g$ and $\Delta_\sigma g $ in terms of the $b_i$'s, expand the resulting terms, and use the commutator identities for all $[b_i,b_j]$ to get the needed cancellations. For the reader's convenience we recall the definition of the operators $\Gamma$ and $\Gamma_2$ associated to the Laplacian $\Delta_\sigma$,
\begin{align*}
\Gamma(f,g) & := \tfrac{1}{2}\left(\Delta_\sigma (fg)-f (\Delta_\sigma g)-(\Delta_\sigma f)g \right)\\
\Gamma_2(f,g) & := \tfrac{1}{2}\left(\Delta_\sigma (\Gamma(f,g))-\Gamma(f,\Delta_\sigma g)-\Gamma(\Delta_\sigma f,g)\right).
\end{align*}	
\end{remark}

\begin{remark} \label{r:Gamma2}
The optimal constant for the $\Gamma_2$ criterion on the sphere is well-known to be equal to $2$ (see \cite[Section 5.7]{bakry-2014}), which would not suffice to prove our main result in the case of Coulomb potentials. We get an advantage due to the symmetry assumption $f(\sigma) = f(-\sigma)$ that is effectively equivalent to consider functions on the projective space $\mathbb RP^2$. A discussion of the best known constants in this setting, and an improvement over the constant of Lemma \ref{l:logpoincare} is given in \cite{sehyun2024}.
\end{remark}

\section{The monotonicity of the Fisher information}

In this section we complete the proof of Theorem \ref{t:main} as a consequence of the results in the previous sections.

\begin{lemma} \label{l:controlofbadterm}
Let $F: \R^6 \to (0,\infty)$ be a smooth function with rapid decay at infinity so that $F(v,w) = F(w,v)$. Let $D_{spherical}$ and $R_{spherical}$ be the quantities defined in \eqref{e:Dspherical} and \eqref{e:Rspherical} respectively. The following inequality holds,
\[ D_{spherical} \geq \frac{19}2 R_{spherical}.\]
\end{lemma}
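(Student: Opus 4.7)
\medskip

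\noindent\textbf{Proof plan.} The plan is to foliate $\R^6$ by the spheres $\sphere(v,w)$ from \eqref{e:sphere} and reduce the inequality, one sphere at a time, to the Poincar\'e-type estimate of Lemma \ref{l:logpoincare}. I will use the coordinates $(u, r, \sigma) \in \R^3 \times (0,\infty) \times S^2$ defined by
\[
u = \frac{v+w}{2}, \qquad r = |v-w|, \qquad \sigma = \frac{v-w}{|v-w|},
\]
under which the map $(u, r, \sigma) \mapsto (v,w) = (u + \tfrac{r}{2}\sigma, u - \tfrac{r}{2}\sigma)$ has Jacobian $\mathrm{d} v \, \mathrm{d} w = r^2 \, \mathrm{d} u \, \mathrm{d} r \, \mathrm{d} \sigma$. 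Each leaf $\{(u,r)=\text{const}\}$ is exactly a sphere of type \eqref{e:sphere}, on which $\alpha(|v-w|) = \alpha(r)$ and $|v-w|^2 = r^2$ are constant.

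The next step is to express the action of $\tilde b_k \cdot \nabla$ on the restriction of $F$ to such a leaf. Since $b_k$ in \eqref{e:bk} is linear in its argument, one has $b_k(v-w) = r\, b_k(\sigma)$, where $b_k(\sigma)$ is the corresponding vector field on $S^2$ used in Section \ref{s:logpoincare}. A direct computation shows that along any curve $(v(t),w(t))$ lying in a single leaf, parametrized by $\sigma(t) \in S^2$, the flow equation $(v'(t),w'(t)) = \tilde b_k(v-w)$ is equivalent to $\sigma'(t) = 2 b_k(\sigma(t))$. Setting
\[
f(\sigma) := F\Bigl(u + \tfrac{r}{2}\sigma,\; u - \tfrac{r}{2}\sigma\Bigr),
\]
this shows $\tilde b_k \cdot \nabla F = 2\, b_k \cdot \nabla_\sigma f$ on the leaf. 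Because the factor $2$ is constant, iterating yields $\tilde b_i \cdot \nabla(\tilde b_j \cdot \nabla F) = 4\, b_i \cdot \nabla_\sigma (b_j \cdot \nabla_\sigma f)$, and identical identities hold with $F$ replaced by $\log F$.

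I will then apply Lemma \ref{l:logpoincare} to $f$ on each leaf. The symmetry assumption $F(v,w) = F(w,v)$ exchanges $\sigma$ with $-\sigma$ while preserving $u$ and $r$, so $f(-\sigma) = f(\sigma)$, which is exactly the hypothesis required by Lemma \ref{l:logpoincare}. Substituting the relations above and using that $16 \cdot \tfrac{19}{4} = 4 \cdot 19$, the lemma gives, for every fixed $(u,r)$,
\begin{align*}
\sum_{i,j=1,2,3} \int_{S^2} F\,|\tilde b_i \cdot \nabla(\tilde b_j \cdot \nabla \log F)|^2 \, \mathrm{d}\sigma
&= 16 \sum_{i,j} \int_{S^2} f\,|b_i \cdot \nabla_\sigma(b_j \cdot \nabla_\sigma \log f)|^2 \, \mathrm{d}\sigma \\
&\geq 19 \sum_{k} \int_{S^2} 4 f\,|b_k \cdot \nabla_\sigma \log f|^2 \, \mathrm{d}\sigma \\
&= 19 \sum_{k} \int_{S^2} F\,|\tilde b_k \cdot \nabla \log F|^2 \, \mathrm{d}\sigma.
\end{align*}

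Finally, multiplying by the common positive factor $\tfrac{\alpha(r)}{2r^2}$ and the Jacobian $r^2$, and integrating in $u$ and $r$, the left-hand side becomes $D_{spherical}$ while the right-hand side becomes $\tfrac{19}{2} R_{spherical}$. Since every step is either an equality or the inequality from Lemma \ref{l:logpoincare} applied leafwise, the conclusion follows. There is no serious obstacle: the only care required is tracking the factor $2$ coming from the speedup of the $\tilde b_k$ flow on each sphere relative to the intrinsic $b_k$ flow on $S^2$, which produces the factor $16$ on the left and $4$ on the right and combines with the constant $\tfrac{19}{4}$ from Lemma \ref{l:logpoincare} into the precise constant $\tfrac{19}{2}$ of the statement.
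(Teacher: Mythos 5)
Your proof is correct and follows essentially the same route as the paper: pass to center-of-mass/radial/spherical coordinates, observe that $\tilde b_k\cdot\nabla$ acts on each leaf as $2\,b_k(\sigma)\cdot\nabla_\sigma$, check that the symmetry $F(v,w)=F(w,v)$ becomes $f(\sigma)=f(-\sigma)$, and apply Lemma \ref{l:logpoincare} sphere by sphere before integrating in the remaining variables. The only difference is the bookkeeping convention ($r=|v-w|$ instead of the paper's $r=|v-w|/2$), and your constants work out identically.
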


\begin{proof}
Let us parametrize $v$ and $w$ in the following way. We let $z := (v+w)/2 \in \R^3$ and we write $(v-w)/2 = r \sigma$ for $r \in [0,\infty)$ and $\sigma \in S^2$. In these coordinates,
\begin{equation} \label{e:zrsigma}
\begin{aligned}
v &= z + r \sigma \\
w &= z - r \sigma.
\end{aligned}
\end{equation}

The Jacobian of this change of variables corresponds to
\[ \dd v \dd w = 8 r^2 \dd \sigma \dd z \dd r. \]

Note that $|v-w| = 2r$ and $b_k(v-w) = 2r b_k(\sigma)$.  Moreover, for any vector $b$ perpendicular to $\sigma$, we can think of $b$ as a vector on the tangent space of $S^2$ at $\sigma$ and we observe that for any function $G: \R^6 \to \R$, $b \cdot \nabla_\sigma G = r b \cdot (\nabla_v - \nabla_w) G$. In particular, with $G = \log F$, it leads to
\[ \tilde b_k(v-w) \cdot \nabla \log F = 2r b_k(\sigma) \cdot (\nabla_v - \nabla_w) \log F =  2 b_k(\sigma) \cdot \nabla_\sigma \log F.\]

The symmetry condition $F(v,w) = F(w,v)$ translates to $F(z,r,\sigma) = F(z,r,-\sigma)$ in terms of the new variables.

We rewrite $D_{spherical}$ and $R_{spherical}$ in these coordinates, 
\begin{align*}
D_{spherical} &= \sum_{i,j=1,2,3} \int_{\R^3} \int_0^\infty \int_{S^2} \frac {\alpha(2r)}{8r^2} \ 16 | b_i(\sigma) \cdot \nabla_\sigma ( b_j(\sigma) \cdot \nabla_\sigma \log F) |^2 \ F  \ (8r^2 \dd \sigma \dd r \dd z) \\
&= 16 \int_{\R^3} \int_0^\infty \alpha(2r) \left( \sum_{i,j=1,2,3} \int_{S^2} | b_i(\sigma) \cdot \nabla_\sigma ( b_j(\sigma) \cdot \nabla_\sigma \log F) |^2 \ F \dd \sigma \right) \dd r \dd z, \\
R_{spherical} &= \sum_{k=1,2,3} \int_{\R^3} \int_0^\infty \int_{S^2} \frac{\alpha(2r)}{4 r^2} \ 4(b_k(\sigma) \cdot \nabla_\sigma \log F)^2 \ F \ (8r^2 \dd \sigma \dd r \dd z) \\
&= 8 \int_{\R^3} \int_0^\infty \alpha(2r) \left( \sum_{k=1,2,3} \int_{S^2} (b_k(\sigma) \cdot \nabla_\sigma \log F)^2 \ F \dd \sigma \right) \dd r \dd z.
\end{align*}

For each value of $z \in \R^3$ and $r \in (0,\infty)$, the inner integrals with respect to $\sigma$ (the ones inside the parenthesis) satisfy the inequality of Lemma \ref{l:logpoincare}. Therefore, we conclude that $\frac 12 D_{spherical} \geq \frac{19}4 \ R_{spherical}$. Equivalently, $D_{spherical} \geq \frac{19}2 R_{spherical}$.
\end{proof}

\begin{prop} \label{p:finalnail}
If $\alpha(r) \geq 0$ is any interaction potential so that for all $r > 0$,
\[ \frac{r |\alpha'(r)|}{\alpha(r)} \leq \sqrt{19},\]
then \eqref{e:question} holds for any smooth function $F: \R^6 \to (0,\infty)$ with rapid decay at infinity so that $F(v,w) = F(w,v)$.
\end{prop}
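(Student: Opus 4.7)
The plan is to combine Lemma \ref{l:newremainder} with Lemma \ref{l:controlofbadterm} directly; at this stage of the paper, Proposition \ref{p:finalnail} is essentially a bookkeeping exercise, with all the heavy lifting already done.

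First, I would recall the bound established in Lemma \ref{l:newremainder}:
\[ \frac{1}{2} \langle I'(F), Q(F) \rangle \leq -D_{parallel} - D_{radial} - D_{spherical} + \sup_{r>0} \left( \frac{r^2 \alpha'(r)^2}{2\alpha(r)^2} \right) R_{spherical}. \]
Under the hypothesis $r|\alpha'(r)|/\alpha(r) \leq \sqrt{19}$ for all $r > 0$, the supremum factor satisfies
\[ \sup_{r>0} \frac{r^2 \alpha'(r)^2}{2\alpha(r)^2} \leq \frac{19}{2}. \]
Substituting this in gives
\[ \frac{1}{2} \langle I'(F), Q(F) \rangle \leq -D_{parallel} - D_{radial} - D_{spherical} + \frac{19}{2} R_{spherical}. \]

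Next, I would invoke Lemma \ref{l:controlofbadterm}, which gives $D_{spherical} \geq \tfrac{19}{2} R_{spherical}$, so $-D_{spherical} + \tfrac{19}{2} R_{spherical} \leq 0$. Using that $D_{parallel}$ and $D_{radial}$ are both manifestly nonnegative (they are integrals of nonnegative integrands weighted by $F$ and $\alpha \geq 0$), we conclude
\[ \frac{1}{2} \langle I'(F), Q(F) \rangle \leq 0, \]
which is precisely \eqref{e:question}.

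There is no real obstacle here: the symmetry assumption $F(v,w) = F(w,v)$ was needed to apply the sphere-level inequality of Lemma \ref{l:logpoincare} inside the proof of Lemma \ref{l:controlofbadterm}, and the smoothness and decay assumptions ensure that all the integrals defining $D_{parallel}$, $D_{radial}$, $D_{spherical}$ and $R_{spherical}$ converge and that the integration by parts used in Lemma \ref{l:remainder} is legitimate. The only delicate numerical point is that the constant $\sqrt{19}$ in the hypothesis matches exactly the constant $19/4$ from the sharpened Poincaré-type inequality on $S^2$ (Lemma \ref{l:logpoincare}), via the factor of $2$ that appears in Lemma \ref{l:controlofbadterm}; this matching is what makes the absorption work with no slack to spare.
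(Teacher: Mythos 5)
Your argument is correct and is exactly the paper's proof: Proposition \ref{p:finalnail} is proved there by combining Lemma \ref{l:newremainder} with Lemma \ref{l:controlofbadterm}, using the hypothesis to bound $\sup_{r>0} r^2\alpha'(r)^2/(2\alpha(r)^2)$ by $19/2$ and the nonnegativity of $D_{parallel}$ and $D_{radial}$, just as you do. No gaps; your write-up simply spells out the bookkeeping that the paper leaves implicit.
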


\begin{proof}
Combine Lemma \ref{l:newremainder} with Lemma \ref{l:controlofbadterm}.
\end{proof}

Theorem \ref{t:main} is essentially proved already.

\begin{proof}[Proof of Theorem \ref{t:main}]
Proposition \ref{p:finalnail} tells us that \eqref{e:question} holds whenever $f:\R^3 \to (0,\infty)$ is smooth, well behaved at infinity and strictly positive. Therefore, the right hand side in Lemma \ref{l:dti=dtI/2} is nonpositive and Theorem \ref{t:main} follows in that case.

If $f$ has vacuum regions, or if its tails are not sufficiently well behaved, we can approximate $F$ with a smooth and well behaved function and pass to the limit. For example, let $\eta : \R^6 \to [0,1]$ be a smooth function so that $\eta \equiv 1$ in $B_1$ and $\eta \equiv 0$ in $\R^6 \setminus B_2$. We set
\[ F_\eps(v,w) = (F(v,w) + \eps) \eta\left(\eps v, \eps w \right) + \left( 1- \eta\left(\eps v, \eps w \right) \right) \exp(-|v|^2). \]
This function $F_\eps$ converges to $F$ as $\eps > 0$. Moreover, for each $\eps>0$, $F_\eps > 0$ and equals a Maxwellian for large values of $v$ and $w$. We thus know that \eqref{e:question} holds for $F_\eps$ and then we deduce it also holds for $F$. For a more thorough description of this technical approximation argument, see Appendix \ref{a:byparts}.
\end{proof}

\begin{remark} \label{r:sphere}
The operator $Q$ has a very simple expression in terms of the variables $z$, $r$ and $\sigma$. From the observation that $\tilde b_i \cdot \nabla F = 2 b_i(\sigma) \cdot \nabla_\sigma F$ and \eqref{e:laplacebeltrami}, we see that
\[ Q(F) = 4\alpha(2r) \ \Delta_\sigma F.\]
Here, $\Delta_\sigma$ is the Laplace-Beltrami operator with respect to $\sigma$ on $S^2$.
\end{remark}

\section{The global existence theorem}
\label{s:globalexistence}

In this section we explain how Theorem \ref{t:main} is used to obtain Theorem \ref{t:main2}.

\begin{proof} [Proof of Theorem \ref{t:main2}]
We recall that the result is already well known in the case $\gamma \in [0,1]$ from \cite{villani1998spatiallyhomogeneous,desvillettes2000}. It is also easy to derive for $\gamma \in [-2,0]$ as a consequence of the upper bound in \cite{silvestre2017}. We focus on the case $\gamma<-2$.

Given any initial data as in Theorem \ref{t:main2}, we construct a solution for a short period of time $[0,T)$, with $T>0$, using Theorem \ref{t:henderson-snelson-tarfulea}. This solution becomes immediately smooth and rapidly decaying. Applying Lemma \ref{l:finite-fisher}, we observe that the Fisher information $i(f)$ also becomes finite for any small $t>0$.

The continuation criteria in Theorem \ref{t:henderson-snelson-tarfulea} tells us that the solution can be extended for as long as $\|f(t)\|_{L^\infty_{k_0}}$ is bounded. In other words, the solution may blow up at time $T$ only if $\lim_{t \to T} \|f\|_{L^\infty_{k_0}} = +\infty$.

Also from Theorem \ref{t:henderson-snelson-tarfulea}, during the interval $[0,T)$, the solution is $C^\infty$, strictly positive, and bounded by $C(t) \langle v \rangle^{-k_0}$ (here $C(t) = \|f(t)\|_{L^\infty_{k_0}}$ might be blowing up as $t \to T$). Under these conditions, we are able to apply Theorem \ref{t:main} and deduce that its Fisher information is monotone decreasing on the time interval $(0,T)$. In particular, for $t_0>0$ small, the Fisher information will remain smaller or equal to $i(f(t_0))$ in the interval $[t_0,T]$. It implies the uniform boundedness of $\|f(t)\|_{L^3}$. Indeed,
\begin{align*}
\|f\|_{L^3} = \|\sqrt f\|_{L^6}^2 \leq C \|\sqrt f\|_{\dot H^1}^2 = C i(f)/4.
\end{align*}
From Theorem \ref{t:moments}, we also know that $\|f(t)\|_{L^1_q}$ remains bounded in $[0,T]$ for all $q > 0$. Interpolating between $\|f(t)\|_{L^1_q}$ and $\|f\|_{L^3}$ we deduce that $\|f(t)\|_{L^p_k}$ is bounded for any $p$ in the range $(1,3)$ and any large exponent $k$. We can thus apply Theorem $\ref{t:silvestre2017}$ since $3/(5+\gamma) < 3$. We deduce that
\[ \|f(t)\|_{L^\infty} \leq C_3 (1+(t-t_0)^{-3/(2p)}),\]
for a constant $C_3$ that depends only on $p$ and the mass, energy and Fisher information of $f_0$. Since the function $f$ is certainly bounded in some short time interval $[0,\delta]$ (from Theorem \ref{t:henderson-snelson-tarfulea}), we deduce that
\begin{equation} \label{e:e1} 
\|f(t)\|_{L^\infty} \leq C_4 \text{ for } t \in [0,T),
\end{equation}
for a constant $C_4$.

We want to apply Theorem \ref{t:cameron2018}. It propagates a Maxwellian upper bound when the exponent $\beta>0$ is sufficiently small depending on the mass, energy and entropy of $f$. This is not a restriction for us, because if $f_0(v) \leq C_0 \exp(-\beta |v|^2)$ for some $C_0$ and $\beta>0$, then the same inequality also holds with a smaller value of $\beta$. We can therefore assume without loss of generality that $\beta>0$ is small.

Using the upper bound \eqref{e:e1} and the moment bounds together with Theorem \ref{t:cameron2018}, we deduce uniform Maxwellian upper bounds for $t \in [0,T)$
\[ f(t,v) \leq C_5 \exp(-\beta |v|^2). \]
 But this means that $\|f\|_{L^\infty_k}$ is uniformly bounded in $[0,T)$ for any exponent $k$. Thus, the solution can never blow up according to the continuation criteria in Theorem \ref{t:henderson-snelson-tarfulea}.

 The uniqueness of the solution follows from Theorem \ref{t:uniqueness}.
\end{proof}
 
\begin{remark}
Note that all the only a priori estimate on $[0,T]$ used in the proof of Theorem \ref{t:main2} that potentially deteriorates as $T \to \infty$ is the moment estimate from Theorem \ref{t:moments} when $\gamma \leq 0$. 
\end{remark}

\appendix\section{the two-dimensional case}

It is slightly simpler to prove a version of Theorem \ref{t:main} in two dimensions, following approximately the same steps in the proof. We state the result here.

\begin{thm} \label{t:2d}
Let $f : [0,T] \times \R^2 \to [0,\infty)$ be a classical solution to the space-homogeneous Landau equation \eqref{e:landauequation}. Assume that the interaction potential $\alpha$ satisfies, for all $r>0$,
\[ \frac{r |\alpha'(r)|}{\alpha(r)} \leq 4,\]
then the Fisher information $i(f)$ is monotone decreasing as a function of time.
\end{thm}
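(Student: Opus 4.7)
The plan is to mirror the proof of Theorem \ref{t:main}, which in two dimensions becomes considerably simpler because the matrix $a_{ij}(z)=|z|^2\delta_{ij}-z_iz_j$ in $\R^2$ has rank one. Write $a_{ij}(z)=b(z)\otimes b(z)$ with $b(z)=(-z_2,z_1)$, lift to $\R^4$ via $F(v,w)=f(v)f(w)$, and introduce the single vector field $\tilde b(v-w)=(b(v-w),-b(v-w))$, so that the associated linear operator is the pure square $Q(F)=\sqrt{\alpha}\,\tilde b\cdot\nabla(\sqrt{\alpha}\,\tilde b\cdot\nabla F)$. Lemmas \ref{l:qwithpi}, \ref{l:I(F)>2i(pi(F))} and \ref{l:dti=dtI/2} carry over without change and reduce the theorem to proving $\langle I'(F),Q(F)\rangle\leq 0$ for every smooth positive symmetric $F:\R^4\to(0,\infty)$ with rapid decay.

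Next, decompose $I=I_{tan}+J$ as in Sections \ref{s:alternatives} and \ref{s:liebrackets}, with $J(F)=\iint_{\R^4}(\n\cdot\nabla F)^2/F$. Exactly as in Proposition \ref{p:In}, the tangential Fisher information $I_{tan}$ is preserved by the flow of $\sqrt{\alpha}\,\tilde b$ on each level set of $|v-w|$, so $\langle I_{tan}'(F),Q(F)\rangle\leq 0$ with dissipation controlled by $I_{tan}''$. Lemma \ref{l:liebrackets} applies (one checks $[\n,\tilde b]=0$ and $\mathrm{div}(\tilde b)=0$ in $\R^4$) and yields
\begin{equation*}
\langle J'(F),Q(F)\rangle = -2\iint F(\n\cdot\nabla(\sqrt{\alpha}\,\tilde b\cdot\nabla\log F))^2 + \iint\frac{(\alpha')^2}{\alpha}(\tilde b\cdot\nabla\log F)^2 F.
\end{equation*}
Combining with the two-dimensional gradient identity $|\nabla G|^2=\tfrac12\sum_{i=1,2}|(\partial_{v_i}+\partial_{w_i})G|^2+(\n\cdot\nabla G)^2+(2|v-w|^2)^{-1}|\tilde b\cdot\nabla G|^2$ and splitting the diffusion as in Section \ref{s:threediffusionterms} gives
\begin{equation*}
\tfrac12\langle I'(F),Q(F)\rangle \leq -D_{sph} + \sup_{r>0}\frac{r^2\alpha'(r)^2}{2\alpha(r)^2}\,R_{sph},
\end{equation*}
where $D_{sph}=\iint\frac{\alpha}{2|v-w|^2}F|\tilde b\cdot\nabla(\tilde b\cdot\nabla\log F)|^2$ and $R_{sph}=\iint\frac{\alpha}{|v-w|^2}F(\tilde b\cdot\nabla\log F)^2$.

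The remaining and central step is to prove $D_{sph}\geq 8 R_{sph}$; together with the hypothesis $r|\alpha'|/\alpha\leq 4$ this gives $\sup r^2(\alpha')^2/(2\alpha^2)\leq 8$ and closes the argument. Introduce polar coordinates $v=z+r\sigma$, $w=z-r\sigma$ on $\R^2\times(0,\infty)\times S^1$ (Jacobian $4r$). A direct computation, using $b(v-w)=2r\,b(\sigma)$ and the parametrization $\sigma=(\cos\theta,\sin\theta)$, shows $\tilde b\cdot\nabla F=2\partial_\theta F$ and $\tilde b\cdot\nabla(\tilde b\cdot\nabla F)=4\partial_\theta^2 F$. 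Writing $D_{sph}$ and $R_{sph}$ in these coordinates and cancelling the common prefactors $\alpha(2r)/r$ reduces the inequality to the fibrewise statement
\begin{equation*}
\int_{S^1} f(\partial_\theta^2\log f)^2\,\dd\theta \;\geq\; 4\int_{S^1} f(\partial_\theta\log f)^2\,\dd\theta
\end{equation*}
for every smooth positive $\pi$-periodic $f:S^1\to(0,\infty)$, the $\pi$-periodicity being the translation of the symmetry $F(v,w)=F(w,v)$.

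The main obstacle, this one-dimensional log-Poincar\'e inequality, is considerably easier than its two-sphere counterpart in Section \ref{s:logpoincare}: because there is only a single direction involved, the matrix gymnastics with $M_g$ and $N_g$ disappear entirely. The $\sqrt f$ trick of \eqref{e:trick_pure_second_derivatives} applied to $\partial_\theta$ gives $\int f(\partial_\theta^2\log f)^2\geq 4\int(\partial_\theta^2\sqrt f)^2$; since $\sqrt f$ is also $\pi$-periodic, the function $g:=\partial_\theta\sqrt f$ is $\pi$-periodic with zero mean on $[0,2\pi]$, so its Fourier expansion uses only modes $e^{\pm 2ik\theta}$ with $k\geq 1$, and the sharp Poincar\'e bound $\int(\partial_\theta g)^2\geq 4\int g^2$ applies. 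Chaining the two estimates yields the constant $4$ and hence $D_{sph}\geq 8R_{sph}$, completing the proof.
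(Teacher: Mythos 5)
Your proposal is correct and follows essentially the same route as the paper's appendix: one vector field $\tilde b$, the same lifting and decomposition $I=I_{tan}+J$, the reduction to $D_{spherical}\geq 8R_{spherical}$, and the fibrewise circle inequality with constant $4$ proved via the $\sqrt f$ trick plus the Poincar\'e inequality for $\pi$-periodic, mean-zero functions (third eigenvalue of $-\partial_\theta^2$ on $S^1$). Your polar-coordinate bookkeeping (Jacobian $4r$, $\tilde b\cdot\nabla F=2\partial_\theta F$) checks out, so nothing is missing.
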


The proof of Theorem \ref{t:2d} follows the same steps as the proof of Theorem \ref{t:main}. We sketch the differences here. We only need one vector field $b_1$ to write $Q(F)$, for a function $F:\R^4 \to [0,\infty)$. We write 
\[ b_1(v-w) = \begin{pmatrix} v_2-w_2 \\ w_1 - v_1 \end{pmatrix}, \qquad \tilde b_1 = \begin{pmatrix} b_1 \\ -b_1 \end{pmatrix}.\]
With this notation, we have
\[ Q(F) = \alpha \tilde b_1 \cdot \nabla( \tilde b_1 \cdot \nabla F).\]
Since we have only one vector $b_1$ instead of the three vectors $b_1$, $b_2$ and $b_3$, the resulting formulas are simpler and involve no summation. Following the same line of thought as for the 3D case we end up with
\[ \frac 12 \langle I'(F) Q(F), Q(F) \rangle \leq -D_{parallel} -D_{radial} - D_{spherical} + \sup_{r>0} \left( \frac{r^2 \alpha'(r)^2}{2 \alpha(r)^2}\right) R_{spherical}, \]
where
\begin{align}
D_{parallel} &:= \frac 12 \int \alpha(|v-w|) F |(\partial_{v_i}+\partial_{w_i}) \tilde b_1 \cdot \nabla \log F|^2 \dd w \dd v \\
D_{radial} &:= \int F |a \cdot \nabla \left( \sqrt \alpha \ \tilde b_1 \cdot \nabla \log F \right)|^2 \dd w \dd v \\
D_{spherical} &:= \int \frac \alpha{2|v-w|^2} F | \tilde b_1 \cdot \nabla ( \tilde b_1 \cdot \nabla \log F) |^2 \dd w \dd v \\
R_{spherical} &:= \int \frac{\alpha}{|v-w|^2} \ F \ (\tilde b_1 \cdot \nabla \log F)^2 \dd w \dd v.
\end{align}

We still want to control $R_{spherical}$ with $D_{spherical}$. It is achieved after the following lemma.
\begin{lemma} \label{l:logpoincare2d}
Let $f : S^1 \to (0,\infty)$ be a $C^2$ function on the circle. Assume that $f(\sigma) = f(-\sigma)$ for all $\sigma \in S^1$. The following inequality holds
\begin{align*} 
\int_{S^1} f (b_1 \cdot \nabla (b_1 \cdot \nabla \log f))^2 \dd \sigma \geq 4 \int_{S^1} f (b_1 \cdot \nabla \log f)^2 \dd \sigma.
\end{align*}
\end{lemma}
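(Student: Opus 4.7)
The plan is to mirror the argument of Lemma \ref{l:logpoincare} from the 3D case, which becomes substantially simpler on $S^1$ since the tangent space is one-dimensional and only a single vector field $b_1$ is involved. Parametrizing $\sigma = (\cos\theta, \sin\theta)$, one has $b_1(\sigma) = (\sin\theta, -\cos\theta)$, which is (up to sign) the standard unit tangent to $S^1$, so $b_1\cdot\nabla_\sigma$ acts as $\pm \partial_\theta$. Thus the inequality to prove reduces to
\[
\int_{S^1} f (\partial_\theta^2 \log f)^2 \dd \theta \;\geq\; 4 \int_{S^1} f (\partial_\theta \log f)^2 \dd \theta.
\]

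My first step would be to apply exactly the same trick used in deriving \eqref{e:trick_pure_second_derivatives}. Writing $u = \sqrt f$ and using $\partial_\theta \log f = 2u'/u$ together with $\partial_\theta^2 \log f = 2u''/u - 2(u')^2/u^2$, the right-hand side of the target inequality becomes $16\int (u')^2 \dd \theta$. The left-hand side expands into $4\int(u'')^2 - 8\int u''(u')^2/u + 4\int (u')^4/u^2$; the middle cross term is rewritten by integrating by parts, using $\int_{S^1} \partial_\theta\bigl((u')^3/u\bigr) \dd\theta = 0$, which yields $-8\int u''(u')^2/u \dd\theta = -\tfrac{8}{3}\int (u')^4/u^2 \dd\theta$. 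Collecting,
\[
\int_{S^1} f (\partial_\theta^2 \log f)^2 \dd \theta \;=\; 4 \int_{S^1} (u'')^2 \dd \theta + \frac{4}{3}\int_{S^1} \frac{(u')^4}{u^2} \dd \theta \;\geq\; 4 \int_{S^1} (u'')^2 \dd \theta.
\]

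The second step is a Poincar\'e-type inequality playing the role of Lemma \ref{l:thirdeigenvalue}: it remains to show $\int_{S^1}(u'')^2 \dd\theta \geq 4 \int_{S^1}(u')^2 \dd\theta$. The evenness condition $f(\sigma) = f(-\sigma)$ is inherited by $u$, and in $\theta$-coordinates reads $u(\theta+\pi) = u(\theta)$. Consequently $u$ admits a Fourier expansion involving only the modes $e^{2ik\theta}$ with $k \in \mathbb Z$, so that $u'$ contains only modes with $|k|\geq 2$ (the zero mode of $u$ is killed by differentiation). The spectral decomposition of $-\partial_\theta^2$ on $S^1$ then yields the desired bound $\int (u'')^2 \dd\theta \geq 4\int (u')^2\dd\theta$. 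Chaining the two inequalities gives $\int f(\partial_\theta^2 \log f)^2 \geq 16 \int (u')^2 = 4\int f(\partial_\theta\log f)^2$.

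The main obstacle is really just the bookkeeping in the integration by parts; no genuinely new inequality on $S^1$ is needed, and there is no analogue of the low-rank algebraic constraint (Lemma \ref{l:linearalgebra}) or the commutator manipulations (Lemma \ref{l:laplaciansquared}) because the tangent bundle of $S^1$ is one-dimensional and $[b_1,b_1]=0$ trivially. The role of the evenness hypothesis is precisely to upgrade the Poincar\'e eigenvalue from $1$ to $4$, exactly analogous to how the $\sigma \mapsto -\sigma$ symmetry on $S^2$ gives the eigenvalue $6$ instead of $2$ in the proof of Lemma \ref{l:logpoincare} (cf.\ Remark \ref{r:Gamma2}).
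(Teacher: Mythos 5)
Your proof is correct and follows essentially the same route as the paper: first the square-root integration-by-parts trick (the paper simply invokes Lemma \ref{l:singledirectioninequality}, whose proof is exactly your computation with $u=\sqrt f$), then the Poincar\'e step using that the antipodal symmetry makes $u$ $\pi$-periodic, so $u'$ contains only frequencies of absolute value at least $2$ and $\int_{S^1}(u'')^2\dd\theta\geq 4\int_{S^1}(u')^2\dd\theta$. Nothing is missing; the only cosmetic slip is the phrase ``modes with $|k|\geq 2$'', where you mean the frequencies $2k$, $k\neq 0$, which is what the eigenvalue-$4$ bound actually uses.
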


In this case the factor $4$ on the right-hand side is optimal. It is achieved asymptotically for a function $f = 1 + \eps g$ as $\eps \to 0$ when $g$ is the third eigenfunction of the Laplacian on $S^1$.

The proof of Lemma \ref{l:logpoincare2d} is significantly easier than Lemma \ref{l:logpoincare}. Since there is only one direction $b_1$, we apply Lemma \ref{l:singledirectioninequality} and obtain right away
\begin{align*}
\int_{S^1} f (b_1 \cdot \nabla (b_1 \cdot \nabla \log f))^2 \dd \sigma &\geq 4 \int_{S^1} (b_1 \cdot \nabla (b_1 \cdot \nabla \sqrt f))^2 \dd \sigma \\
\intertext{Since $f(\sigma)=f(-\sigma)$, then $b_1 \cdot \nabla f$ is orthogonal to the first two eigenspaces. The third eigenvalues of $-\partial_{\sigma}^2$ in $S^1$ is equal to four.}
&\geq 16 \int_{S^1} (b_1 \cdot \nabla \sqrt f)^2 \dd \sigma = 4 \int_{S^1} f (b_1 \cdot \nabla \log f)^2 \dd \sigma.
\end{align*}
We conclude that $D_{spherical} \geq 8 R_{spherical}$, and use it to finish the proof of Theorem \ref{t:2d}.

\medskip

If we want to carry out the analysis in this paper in $\R^d$ for $d>3$, we would have to consider $d(d-1)/2$ vectors $b_i$'s. For example, in four dimensions, the six vectors would be
\begin{align*} 
b_1 &= \begin{pmatrix} v_2-w_2 \\ w_1 - v_1 \\ 0 \\ 0 \end{pmatrix}, \ b_2 = \begin{pmatrix} w_3-v_3 \\ 0 \\ v_1-w_1 \\ 0 \end{pmatrix}, \ b_3 = \begin{pmatrix} v_4-w_4 \\ 0 \\ 0 \\ w_1 - v_2 \end{pmatrix},\\
\phantom{.}\\
b_4 &= \begin{pmatrix} 0 \\ v_3-w_3 \\ w_2 - v_2 \\ 0 \end{pmatrix}, \ b_5 = \begin{pmatrix} 0 \\ w_4-v_4 \\ 0 \\ v_2 - w_2 \end{pmatrix}, \ b_6 = \begin{pmatrix} 0 \\ 0 \\ v_4-w_4 \\ w_3 - v_3 \end{pmatrix}.
\end{align*}

There is a result similar to Lemma \ref{l:logpoincare} in any dimension. However, the constant factor on the right-hand side depends on the dimension, as well as the range of admissible values of $|r\alpha'(r)/\alpha(r)|$ for the higher dimensional counterpart of Theorem \ref{t:main}. The values we computed in this paper for the three dimensional case are probably not optimal. It would require some work to compute the sharp range in arbitrary dimension.

\section{On the decay of the tails of our integrals} 
\label{a:byparts}


Throughout this paper, we work with solutions $f$ that are $C^\infty$ and decay as $|v| \to \infty$ faster than any algebraic rate. The equation \eqref{e:landauequation} is understood in the classical sense. There are several instances where we consider the derivative of the Fisher information and we end up with integrals in $\R^6$ involving two or three derivatives of $\log f$. All the integrands are homogeneous of degree one on $f$. It is natural to expect them to decay rapidly as $|v|\to \infty$. However, some justification is required since $\log f$ and its derivatives have some growth as $|v|\to \infty$. It is not completely inappropriate (even if admittedly pedantic) to provide a justification that all the integrals in this paper make sense. We describe it in this appendix so that the reader is not distracted through the main text of the article.

In the proof of Theorem \ref{t:main}, we argued that a generic function $F$ can be approximated with a strictly positive function which agrees with a Maxwellian for large velocities. This approximation can be used, whenever necessary, to justify that the lemmas and inequalities throughout this paper apply to a much wider class of functions. In this appendix, we show that the upper and lower Maxwellian bounds introduced in this approximation are propagated in time by the Landau equation. Thus, we show that these solutions to the Landau equation \eqref{e:landauequation} will always be well behaved, $C^\infty$ smooth, strictly positive, and with well behaved tails for $|v| \to \infty$. Theorem \ref{t:henderson-snelson-tarfulea} justifies the majority of these statements. The only condition that remains to be justified is that the derivatives of $\log F$ are appropriately bounded for large velocities so that the tails of the integrals throughout this paper are convergent. In this appendix, we describe the procedure to approximate the whole solution to \eqref{e:landauequation} with solutions $f^\eps$ for which we verify these bounds.


One sufficient condition that would easily validate all the integral expressions in this article would be when $f \lesssim \exp(-\beta|v|^2)$ and $(1+|\nabla \log f|+|D^2 \log f|) \lesssim \exp(\eps |v|^2)$ for $\eps \ll \beta$. We will show that this condition is satisfied for a general class of initial data $f_0$. If $f_0$ is bounded below and above by a multiple of the same Maxwellian $f_0 \approx \exp(-\beta |v|^2)$, then these bounds can be propagated in time following the ideas in \cite{cameron2018} for as long as there is a classical solution to the equation. Bounds on the derivatives of $\log f$ follow applying standard parabolic estimates.

Below, we briefly review the propagation of Gaussian bounds following techniques from the literature. It is a completely standard technique, so we only sketch the proofs here.

We explain the propagation of Gaussian bounds in the case $\alpha(r) = r^\gamma$ with $\gamma \in [-3,0]$. In the case $\gamma \in [0,1]$, upper and lower bounds of the same kind are obtained in \cite{desvillettes2000}.

\begin{prop} \label{p:lowerbound}
Let $f : [0,T] \times \R^3 \to [0,\infty)$ be a solution of \eqref{e:landauequation} with $\alpha(r) = r^\gamma$, $\gamma \in [-3,0]$ and initial data $f_0$. Assume that for some $\beta>0$, $\delta_0>0$ and $C_0$,
\begin{align*}
f_0(v) \geq \delta_0 \exp(-\beta |v|^2), \\
f(t,v) \leq C_0 \exp(-\beta |v|^2).
\end{align*}
Then, for any $\eps>0$, there is a $\delta_1>0$ so that
\[ f(t,v) \geq \delta_1 \exp(-(\beta+\eps)|v|^2),\]
for all $t \in [0,T]$ and $v \in \R^3$.
\end{prop}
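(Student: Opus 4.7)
The plan is a standard barrier/comparison argument that is essentially dual to the propagation of Maxwellian upper bounds from \cite{cameron2018}. Write the Landau equation in non-divergence form
\[ f_t = \bar a_{ij} \partial_{ij} f + \bar c f, \]
where $\bar a_{ij}(t,v) = \int \alpha(|v-w|) a_{ij}(v-w) f(t,w) \dd w$ and $\bar c = -\partial_{ij} \bar a_{ij}$. Under the assumed Maxwellian upper bound on $f$, together with the conservation of mass and energy (and whatever moments one needs, which are controlled by the upper bound), the same computation as in the proof of Theorem~\ref{t:cameron2018} (sketched in the excerpt for the case $\gamma\leq-2$, with the obvious modification for the rest of the range) yields pointwise bounds of the form
\[ \bar a_{ij} v_i v_j \leq C_1 \langle v\rangle^{\gamma+2}, \qquad \trace(\bar a) \leq C_1 \langle v\rangle^{\gamma+2}, \qquad |\bar c| \leq C_1 \langle v\rangle^\gamma, \]
where $C_1$ depends on $\beta, C_0$ and the mass/energy of $f_0$.

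The strategy is then to construct a Gaussian subsolution of the form $\phi(t,v) = \delta(t)\exp(-\mu(t)|v|^2)$ with $\mu(0)=\beta$ and $\delta(0)=\delta_0$, so that $\phi(0,v) \leq f_0(v)$ pointwise. Direct differentiation yields
\[ \frac{\phi_t - \bar a_{ij}\partial_{ij}\phi - \bar c\phi}{\phi} = \frac{\delta'}{\delta} - \mu' |v|^2 - 4\mu^2 \bar a_{ij} v_i v_j + 2\mu\, \trace(\bar a) - \bar c. \]
Using the coefficient bounds, the subsolution inequality $\phi_t - \bar a_{ij}\partial_{ij}\phi - \bar c\phi \leq 0$ can be enforced by prescribing $\mu(t)$ to increase monotonically from $\beta$ with a rate fast enough that $-\mu' |v|^2$ dominates $2\mu\trace(\bar a) \lesssim \mu \langle v\rangle^{\gamma+2}$ for all $v$, and then choosing $\delta(t)$ according to a linear ODE of the form $\delta' \geq -K\delta$. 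Because $\gamma \leq 0$, a rate of the form $\mu' \sim C\mu$ for a bounded constant $C = C(\beta, C_1)$ suffices; taking any $\mu(t)$ that increases at this rate but stays below $\beta + \eps$ on $[0,T]$ (which can always be arranged by first shortening $T$ and iterating, if necessary, or by allowing $\delta_1 = \delta_1(\eps,T)$ to be small) yields the required subsolution with $\delta(T) \geq \delta_1 > 0$ and $\mu(T) \leq \beta + \eps$, whence $\phi(T,v) \geq \delta_1 \exp(-(\beta+\eps)|v|^2)$.

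It remains to invoke the parabolic maximum principle. The function $w := f - \phi$ is continuous on $[0,T]\times \R^3$, nonnegative at $t=0$, and satisfies $w_t \geq \bar a_{ij}\partial_{ij} w + \bar c w$; since $f \geq 0$ and $\phi \to 0$ as $|v|\to\infty$ uniformly in $t$, we have $\liminf_{|v|\to\infty} w(t,v) \geq 0$ uniformly. After the usual reduction $\tilde w = e^{-Kt}w$ with $K > \sup|\bar c|$ to remove the zeroth-order term's sign, a negative infimum of $\tilde w$ would be attained at an interior point $(t^\star, v^\star)\in (0,T]\times \R^3$, where $\partial_t\tilde w \leq 0$ and $\bar a_{ij}\partial_{ij}\tilde w \geq 0$ contradict the subsolution inequality. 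Thus $f \geq \phi$ on $[0,T]\times\R^3$, which is the claim. The main obstacle is the balancing in the subsolution inequality: for $\gamma > -2$ the term $-2\mu\trace(\bar a)$ is not uniformly bounded in $v$, and one must use $-\mu'|v|^2$ to absorb it without letting $\mu$ grow past $\beta+\eps$; this is precisely why the slack $\eps > 0$ appears in the statement, and why Theorem~\ref{t:cameron2018} is invoked with $\beta$ small (ensuring $\mu$ can be kept small enough that the quadratic term $4\mu^2 \bar a_{ij}v_iv_j$ remains dominated by $-2\mu\trace(\bar a)$ throughout the evolution).
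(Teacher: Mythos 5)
Your overall architecture is the paper's: write $q(f)=\bar a_{ij}\partial_{ij}f+\bar c f$, build a Gaussian barrier $\phi(t,v)=\delta(t)e^{-\mu(t)|v|^2}$ that is a subsolution, and conclude by comparison (the paper uses the barrier $\psi=\delta_0\exp(-C_1t-\eps t|v|^2)\exp(-\beta|v|^2)$ and cites \cite{cameron2018} for the comparison step; it also simply drops the zeroth-order term since $\bar c=-\partial_{ij}\bar a_{ij}\geq 0$, so no bound on $|\bar c|$ or $e^{-Kt}$ reduction is needed). The genuine gap is in your choice of $\mu(t)$. You assert that the subsolution inequality forces a rate $\mu'\sim C\mu$, i.e.\ $\mu(t)\approx\beta e^{Ct}$, and then try to reconcile this with the required bound $\mu\leq\beta+\eps$ by ``shortening $T$ and iterating'' or by shrinking $\delta_1$. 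Neither patch works: the requirement $\mu'\geq C\mu$ is pointwise in time, so subdividing $[0,T]$ and restarting still yields $\mu(T)\geq\beta e^{CT}$ after the iteration, and no choice of constant $\delta_1$ converts a lower bound $\delta\exp(-\beta e^{CT}|v|^2)$ into one of the form $\delta_1\exp(-(\beta+\eps)|v|^2)$, since the ratio of the two Gaussians tends to $0$ as $|v|\to\infty$. As written, your barrier therefore does not prove the stated conclusion for small $\eps$.

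The fix — and the actual point of the paper's proof — is that exponential growth of the exponent is not needed. The only unfavorable term in your identity is $2\mu\,\trace(\bar a)\leq C\Lambda\mu\langle v\rangle^{\gamma+2}$ (the term $-4\mu^2\bar a_{ij}v_iv_j$ has a favorable sign and can be discarded; it never needs to be ``dominated,'' and the smallness of $\beta$ in Theorem \ref{t:cameron2018} plays no role here). Since $\gamma<0$ gives $\gamma+2<2$, this term is sublinear relative to $|v|^2$: by Young's inequality, $\langle v\rangle^{\gamma+2}\leq\eta|v|^2+C_\eta$ for every $\eta>0$. Hence a \emph{constant}, arbitrarily small rate suffices: take $\mu(t)=\beta+(\eps/T)t$ and $\delta(t)=\delta_0e^{-C_1t}$, choose $\eta$ so that $2C\Lambda(\beta+\eps)\eta\leq\eps/T$, and absorb the bounded remainder $2C\Lambda(\beta+\eps)C_\eta$ into $C_1$; then $\phi$ is a subsolution on $[0,T]$ with $\mu(T)=\beta+\eps$ and $\delta_1=\delta_0e^{-C_1T}>0$, which is exactly the paper's barrier. (Two further small points: your condition on $\delta$ should read $\delta'\leq-K\delta$, not $\delta'\geq-K\delta$; and the borderline case $\gamma=0$, where $\langle v\rangle^{\gamma+2}$ is genuinely quadratic, is not reachable by this absorption and is covered in the literature for hard potentials rather than by this barrier.)
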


\begin{proof}[Sketch proof]
We write the Landau collision operator in non-divergence form
\[ q(f) = \bar a_{ij} \partial f + \bar c f,\]
where 
\[ \bar a_{ij} = \int \alpha(|v-w|) a_{ij}(v-w) f(w) \dd w, \qquad \bar c = -\partial_{ij} \bar a_{ij} \geq 0.\]

We follow the same idea as in \cite[Theorem 4.3]{cameron2018} but with a Maxwellian bound from below. We must find a function $\psi(t,v)$ which is a subsolution to
\[ \partial_t \psi \leq \bar a_{ij} \partial_{ij} \psi + \bar c \psi. \]
Following \cite{cameron2018}, we know that if $\psi(0,v) \leq f(0,v)$, then we will also have $\psi(t,v) \leq f(t,v)$ for all $t \in [0,T]$ and $v \in \R^3$. We claim that the function $\psi(t,v) = \delta_0 \exp(-C_1 t - \eps t |v|^2) \exp(-\beta |v|^2)$ satisfies this differential inequality.

It is well known that the coefficients $\bar a_{ij}$ satisfy certain ellipticity bounds. We have, for some constants $\Lambda \geq \lambda > 0$,
\begin{equation} \label{e:ellipticity}
\lambda \langle v \rangle^\gamma \left(  \left(|v|^2 I - (v \otimes v)\right) + \langle v \rangle^{-2} (v \otimes v) \right) \leq \{ \bar a_{ij} \} \leq \Lambda \langle v \rangle^\gamma \left( \left(|v|^2 I - (v \otimes v)\right) + \langle v \rangle^{-2} (v \otimes v) \right).
\end{equation}
The constant $\lambda>0$ in the lower bound depends only on the mass, energy and entropy of $f_0$. The proof is the same for any value of $\gamma \in [-3,1]$. It can be found in \cite[Lemma 3.1]{silvestre2017} and \cite[Proposition 4]{desvillettes2000}. For a proof of the upper bound, see \cite[Lemma 2.1]{cameron2018} and the proof of Theorem \ref{t:cameron2018} in Section \ref{s:preliminaries}.

Plugging these estimates on our function $\psi$, we see that
\begin{align*}
\partial_t \psi &= -(C_1 + \eps |v|^2) \psi, \\
\bar a_{ij} \partial_{ij} \psi + \bar c \psi &\geq \bar a_{ij} \partial_{ij} \psi \\
&\gtrsim -\Lambda \langle v \rangle^{\gamma+2} \psi.
\end{align*}
We pick $C_1$ sufficiently large so that for all $v \in \R^3$ we have $C_1 + \eps |v|^2 \gtrsim \Lambda \langle v \rangle^{\gamma+2}$ and we finish the proof.
\end{proof}

The derivatives of $f$ can be bounded using standard parabolic estimates.

\begin{prop} \label{p:schauder}
Let $f : [0,T] \times \R^3 \to [0,\infty)$ be a solution of \eqref{e:landauequation} with $\alpha(r) = r^\gamma$, $\gamma \in [-3,0]$. Assume that for some $\beta>0$ and $C_0$,
\begin{align*}
f(t,v) \leq C_0 \exp(-\beta |v|^2).
\end{align*}
Then, for any $\eps>0$, there is a constant $C_1$ (depending on $T$ and $C_0$) so that
\begin{align*} 
|\nabla_vf(t,v)| &\leq C_1 \exp(-(\beta-\eps)|v|^2), \\
|D^2_vf(t,v)| &\leq C_1 \exp(-(\beta-\eps)|v|^2).
\end{align*}
for all $t \in [T/2,T]$ and $v \in \R^3$.
\end{prop}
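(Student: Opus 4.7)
The plan is to view $f$ as a solution of a linear second-order parabolic equation with smooth coefficients and apply interior parabolic Schauder estimates on an appropriate local cylinder around each point $(t_0, v_0)$. The Gaussian hypothesis yields an $L^\infty$ bound on $f$ over such a cylinder with only a small loss in the exponent, while the Schauder constants contribute only polynomial factors in $\langle v_0\rangle$, which are absorbed by the $\eps|v_0|^2$ slack between $\beta$ and $\beta - \eps$.

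Write \eqref{e:landauequation} in nondivergence form $f_t = \bar a_{ij}(t,v)\partial_{ij} f + \bar c(t,v) f$ with $\bar c = -\partial_{ij}\bar a_{ij}$. By Theorem \ref{t:henderson-snelson-tarfulea}, $f$ is smooth, so the coefficients are smooth in $(t,v)$; the Gaussian hypothesis on $f$ together with Theorem \ref{t:moments} give that their $v$-derivatives are polynomially bounded in $\langle v\rangle$. The ellipticity \eqref{e:ellipticity} has eigenvalues of order $\langle v\rangle^{\gamma+2}$ transverse to $v$ and $\langle v\rangle^\gamma$ along $v$. Fix $(t_0, v_0)$ with $t_0 \in [T/2, T]$ and $|v_0|$ large, and choose $r = c\langle v_0\rangle^{\gamma/2}$, $\tau = r^2 \langle v_0\rangle^{-\gamma}$ with $c$ small enough that $\tau \leq T/4$. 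Under the parabolic rescaling $\tilde t = (t-t_0)/\tau$, $\tilde v = (v-v_0)/r$, the equation on $Q = (t_0-\tau, t_0] \times B_r(v_0)$ becomes a parabolic equation on the unit cylinder whose coefficient Hölder norms are bounded independently of $v_0$ (the polynomial growth of the $v$-derivatives of $\bar a_{ij}$ is cancelled by the small factor $r^\alpha$). Interior parabolic Schauder yields
\[ |\nabla_v f(t_0,v_0)| + |D^2_v f(t_0,v_0)| \lesssim r^{-2}\,\|f\|_{L^\infty(Q)}. \]
On $B_r(v_0)$ the Gaussian bound gives $\|f\|_{L^\infty(Q)} \leq C_0 \exp(-\beta |v_0|^2 + O(|v_0|\,r))$, so the loss in the exponent is $O(|v_0|^{1+\gamma/2}) = o(|v_0|^2)$ throughout $\gamma \in [-3,0]$. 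Both this exponential error and the polynomial prefactor $r^{-2} \sim \langle v_0\rangle^{-\gamma}$ are absorbed into the slack between $\beta$ and $\beta-\eps$; for bounded $|v_0|$ the claim is immediate from Theorem \ref{t:henderson-snelson-tarfulea}.

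The main obstacle is handling the anisotropic ellipticity of $\bar a_{ij}$ cleanly: the ratio of its maximum to minimum eigenvalue at $v_0$ is $\sim \langle v_0\rangle^2$, so the isotropic rescaling above produces a rescaled matrix whose ellipticity ratio is of the same order, making the Schauder constants a priori dependent on $v_0$. One remedies this by using an anisotropic rescaling with different spatial scales along and transverse to $v_0$, turning the rescaled equation into a genuinely uniformly parabolic one on the unit cylinder; alternatively, one accepts the suboptimal isotropic rescaling and carries an extra polynomial loss. Either way the loss is polynomial in $\langle v_0\rangle$ and is comfortably absorbed by the Gaussian cushion; the only delicate step is verifying that after rescaling the coefficient Hölder norms are uniform in $v_0$, which uses the smoothness of $\bar a_{ij}$ inherited from $f$ via Theorem \ref{t:henderson-snelson-tarfulea} together with standard estimates for convolutions with Gaussian-decaying $f$.
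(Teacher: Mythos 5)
Your proposal follows essentially the same route as the paper: write the equation in nondivergence form $f_t=\bar a_{ij}\partial_{ij}f+\bar c f$, note the coefficients' ellipticity \eqref{e:ellipticity} and H\"older regularity, apply an anisotropic change of variables around each $(t_0,v_0)$ (the paper uses the map $T_{v_0}$ from \cite[Section 4]{cameron2018}) to obtain a uniformly parabolic equation on a unit cylinder, invoke interior Schauder estimates, and absorb the resulting polynomial-in-$\langle v_0\rangle$ factors into the $\eps$ loss in the Gaussian exponent. The argument is correct; only the side remark that one could instead keep the isotropic rescaling and ``carry an extra polynomial loss'' is unjustified as stated (the Schauder constant's dependence on the ellipticity ratio is not obviously polynomial), but your main anisotropic route does not rely on it.
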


\begin{proof}[Sketch proof]
The function $f$ satisfies the equation
\[ f_t = \bar a_{ij} \partial_{ij} f + \bar c f.\]
The coefficients $a_{ij}$ satisfy the ellipticity bounds \eqref{e:ellipticity}. Moreover, from the Gaussian upper bound on $f$, we can deduce that both $\bar a_{ij}$ and $\bar c$ are H\"older continuous in $v$. In order to overcome the difficulty that the ellipticity condition degenerates as $|v| \to \infty$, for every $t_0>0$ and $v_0 \in \R^3$ we use the change of variables $T_{v_0}$ described in \cite[Section 4]{cameron2018}. It maps a parabolic ellipsoid around $(t_0,v_0)$ into $(-1,0] \times B_1$ and the function $f$ into a function $\tilde f$ that satisfies a linear parabolic equation whose coefficients are elliptic with parameters uniform with respect to $v_0$. We may further rescale it to make the H\"older norm of the coefficients less than one. Applying the Schauder estimates to this function $\tilde f$, we obtain
\[ |D^2_v \tilde f(0)| \leq C \|\tilde f\|_{L^\infty((-1,0] \times B_1)} \lesssim \exp(-\beta |v_0|^2).\]
Rewriting the estimate above in terms of the original function $f$, we obtain, for some computable exponent $m \in \mathbb N$,
\[|D^2_v f(t_0,v_0)| \leq C \langle v_0 \rangle^m \exp(-\beta |v_0|^2) \leq C_1 \exp(-(\beta-\eps) |v_0|^2).\]
Some early references to the Schauder estimates depending on the H\"older norm of the coefficients in space only are \cite{brandt1969,knerr1980}.
\end{proof}

Combining the upper bound from Theorem \ref{t:cameron2018}, the lower bound of Proposition \ref{p:lowerbound} and the bound for the derivatives of Proposition \ref{p:schauder}, we see that whenever $f$ is a solution of \eqref{e:landauequation} with initial data $f_0$ so that $\delta_0 \exp(-\beta |v|^2) \leq f_0(v) \leq C_0 \exp(-\beta |v|^2)$, then for any $\eps>0$ and $t>0$ there exists a constant $C_2$ so that\begin{align*}
|\nabla_v \log f(t,v)| &\leq C_2 \exp(\eps |v|^2), \\
|D^2_v \log f(t,v)| &\leq C_2 \exp(\eps |v|^2).
\end{align*}

Not only do the solutions of the Landau equation decay at large velocities but also their derivatives. We can use that to deduce that the coefficients $\bar a_{ij}$ are also $C^\infty$ and deduce estimates like those of Proposition \ref{p:schauder} for higher order derivatives of $f$.

The potential growth of $\nabla \log f$ and $D^2 \log f$ is an order of magnitude smaller than the decay of $f$ as $|v| \to \infty$. This is enough to conclude that every integrand considered in this paper decays faster than some Gaussian rate as $|v| \to \infty$. It shows that all our integrals are well defined and our manipulations (such as integration by parts) are fully justified at least when the initial data $f_0$ satisfies these Gaussian bounds.

If $f_0$ is a generic function with arbitrary decay and perhaps some vacuum regions, we may approximate $f_0$ following the same rule as in the proof of Theorem \ref{t:main}. More precisely, let $f^\eps$ be the solution to \eqref{e:landauequation} with initial data
\[ f^\eps(0,v) = (f_0(v) + \eps) \eta\left(\eps v \right) + \left( 1- \eta\left(\eps v \right) \right) \exp(-|v|^2). \]
Here $\eta: \R^3 \to [0,1]$ is a smooth function supported in $B_2$ so that $\eta \equiv 1$ in $B_1$.

As we discussed above, this solution $f^\eps$ propagates in time the Gaussian bounds from above and below. For this approximate solution all our computations are justified and Theorems \ref{t:main} and \ref{t:main2} hold. We deduce that for every $\eps>0$, there exists a global-in-time smooth solution whose Fisher information is monotone decreasing. As $\eps \to 0$, $f^\eps$ converges to $f$, which is the unique (from Theorem \ref{t:uniqueness}) solution to \eqref{e:landauequation} with initial data $f_0$. The convergence holds for example in for $f$ in $L^{\infty,k}$, for any exponent $k$, and in $L^\infty([0,T],H^1(\R^3))$ for $\sqrt{f}$. We conclude that the Fisher information of an arbitrary solution $f$ of \eqref{e:landauequation} is monotone decreasing in time, assuming only that $f_0 \in L^{\infty,k}$ for $k$ as in Theorem \ref{t:henderson-snelson-tarfulea} and $\sqrt f_0 \in H^1$.

\bibliographystyle{plain}
\bibliography{landau}

\begin{thebibliography}{10}

\bibitem{alonso2023solutions}
R.~Alonso, V.~Bagland, L.~Desvillettes, and B.~Lods.
\newblock A~priori estimates for solutions to {L}andau equation under
  {P}rodi-{S}errin like criteria.
\newblock {\em Arch. Ration. Mech. Anal.}, 248(3):Paper No. 42, 63, 2024.

\bibitem{alonso2019}
Ricardo~J. Alonso, V\'{e}ronique Bagland, and Bertrand Lods.
\newblock Uniform estimates on the {F}isher information for solutions to
  {B}oltzmann and {L}andau equations.
\newblock {\em Kinet. Relat. Models}, 12(5):1163--1183, 2019.

\bibitem{peskov1977}
A.~A. {Arsen'ev} and N.~V. Peskov.
\newblock The existence of a generalized solution of {L}andau's equation.
\newblock {\em \v{Z}. Vy\v{c}isl. Mat i Mat. Fiz.}, 17(4):1063--1068, 1096,
  1977.

\bibitem{bakry-2014}
Dominique Bakry, Ivan Gentil, and Michel Ledoux.
\newblock {\em Analysis and geometry of {M}arkov diffusion operators}, volume
  348 of {\em Grundlehren der mathematischen Wissenschaften [Fundamental
  Principles of Mathematical Sciences]}.
\newblock Springer, Cham, 2014.

\bibitem{bedrossian2022}
Jacob Bedrossian, Maria~Pia Gualdani, and Stanley Snelson.
\newblock Non-existence of some approximately self-similar singularities for
  the {L}andau, {V}lasov-{P}oisson-{L}andau, and {B}oltzmann equations.
\newblock {\em Trans. Amer. Math. Soc.}, 375(3):2187--2216, 2022.

\bibitem{bobylev1975}
A.~V. Bobyl\"{e}v.
\newblock The method of the {F}ourier transform in the theory of the
  {B}oltzmann equation for {M}axwell molecules.
\newblock {\em Dokl. Akad. Nauk SSSR}, 225(6):1041--1044, 1975.

\bibitem{bobylev1988}
A.~V. Bobyl\"{e}v.
\newblock The theory of the nonlinear spatially uniform {B}oltzmann equation
  for {M}axwell molecules.
\newblock In {\em Mathematical physics reviews, {V}ol. 7}, volume~7 of {\em
  Soviet Sci. Rev. Sect. C: Math. Phys. Rev.}, pages 111--233. Harwood Academic
  Publ., Chur, 1988.

\bibitem{bobylev2023}
A.~V. Bobylev.
\newblock Radially symmetric models of the {L}andau kinetic equation and high
  energy tails.
\newblock {\em J. Stat. Phys.}, 190(3):Paper No. 48, 24, 2023.

\bibitem{gamba2015}
Alexander Bobylev, Irene Gamba, and Irina Potapenko.
\newblock On some properties of the {L}andau kinetic equation.
\newblock {\em J. Stat. Phys.}, 161(6):1327--1338, 2015.

\bibitem{bolley2007}
F.~Bolley and J.~A. Carrillo.
\newblock Tanaka theorem for inelastic {M}axwell models.
\newblock {\em Comm. Math. Phys.}, 276(2):287--314, 2007.

\bibitem{brandt1969}
A.~Brandt.
\newblock Interior {S}chauder estimates for parabolic differential- (or
  difference-) equations via the maximum principle.
\newblock {\em Israel J. Math.}, 7:254--262, 1969.

\bibitem{cabrera2023regularization}
Rene Cabrera, Maria~Pia Gualdani, and Nestor Guillen.
\newblock Regularization estimates of the landau--coulomb diffusion.
\newblock {\em Nonlinear Analysis}, 251:113695, 2025.

\bibitem{cameron2018}
Stephen Cameron, Luis Silvestre, and Stanley Snelson.
\newblock Global a priori estimates for the inhomogeneous {L}andau equation
  with moderately soft potentials.
\newblock {\em Ann. Inst. H. Poincar\'{e} C Anal. Non Lin\'{e}aire},
  35(3):625--642, 2018.

\bibitem{carlen1991}
Eric~A. Carlen.
\newblock Superadditivity of {F}isher's information and logarithmic {S}obolev
  inequalities.
\newblock {\em J. Funct. Anal.}, 101(1):194--211, 1991.

\bibitem{desvillettes2015}
L.~Desvillettes.
\newblock Entropy dissipation estimates for the {L}andau equation in the
  {C}oulomb case and applications.
\newblock {\em J. Funct. Anal.}, 269(5):1359--1403, 2015.

\bibitem{amelie2024}
Laurent Desvillettes, William Golding, Maria~Pia Gualdani, and Amelie Loher.
\newblock Production of the {F}isher information for the {L}andau-{C}oulomb
  equation with {$L^1$} initial data.
\newblock {\em arXiv preprint arXiv:2410.10765}, 2024.

\bibitem{desvillettes2020new}
Laurent Desvillettes, Ling-Bing He, and Jin-Cheng Jiang.
\newblock A new monotonicity formula for the spatially homogeneous landau
  equation with coulomb potential and its applications.
\newblock {\em J. Eur. Math. Soc}, 2023.

\bibitem{desvillettes2000}
Laurent Desvillettes and C\'{e}dric Villani.
\newblock On the spatially homogeneous {L}andau equation for hard potentials.
  {I}. {E}xistence, uniqueness and smoothness.
\newblock {\em Comm. Partial Differential Equations}, 25(1-2):179--259, 2000.

\bibitem{desvillattes2000II}
Laurent Desvillettes and C\'{e}dric Villani.
\newblock On the spatially homogeneous {L}andau equation for hard potentials.
  {II}. {$H$}-theorem and applications.
\newblock {\em Comm. Partial Differential Equations}, 25(1-2):261--298, 2000.

\bibitem{elsafadi2007}
Mouhamad El~Safadi.
\newblock Smoothness of weak solutions of the spatially homogeneous {L}andau
  equation.
\newblock {\em Anal. Appl. (Singap.)}, 5(1):29--49, 2007.

\bibitem{fournier2010uniqueness}
Nicolas Fournier.
\newblock Uniqueness of bounded solutions for the homogeneous {L}andau equation
  with a {C}oulomb potential.
\newblock {\em Comm. Math. Phys.}, 299(3):765--782, 2010.

\bibitem{fournier2009}
Nicolas Fournier and H\'{e}l\`ene Gu\'{e}rin.
\newblock Well-posedness of the spatially homogeneous {L}andau equation for
  soft potentials.
\newblock {\em J. Funct. Anal.}, 256(8):2542--2560, 2009.

\bibitem{golding2023global}
William Golding, Maria~Pia Gualdani, and Am{\'e}lie Loher.
\newblock Nonlinear regularization estimates and global well-posedness for the
  landau--coulomb equation near equilibrium.
\newblock {\em SIAM Journal on Mathematical Analysis}, 56(6):8037--8069, 2024.

\bibitem{golding2023local}
William Golding and Am{\'e}lie Loher.
\newblock Local-in-time strong solutions of the homogeneous landau--coulomb
  equation with {$L^p$} initial datum.
\newblock {\em La Matematica}, 3(1):337--369, 2024.

\bibitem{golse2021}
Fran\c{c}ois Golse.
\newblock Partial regularity in time for the {L}andau equation (with {C}oulomb
  interaction).
\newblock In {\em From particle systems to partial differential equations},
  volume 352 of {\em Springer Proc. Math. Stat.}, pages 283--300. Springer,
  Cham, [2021] \copyright 2021.

\bibitem{golse2022asens}
Fran\c{c}ois Golse, Maria~Pia Gualdani, Cyril Imbert, and Alexis Vasseur.
\newblock Partial regularity in time for the space-homogeneous {L}andau
  equation with {C}oulomb potential.
\newblock {\em Ann. Sci. \'{E}c. Norm. Sup\'{e}r. (4)}, 55(6):1575--1611, 2022.

\bibitem{golse2022local}
Fran{\c{c}}ois Golse, Cyril Imbert, Sehyun Ji, and Alexis~F Vasseur.
\newblock Local regularity for the space-homogenous landau equation with very
  soft potentials.
\newblock {\em Journal of Evolution Equations}, 24(4):1--81, 2024.

\bibitem{gressman2012}
Philip~T. Gressman, Joachim Krieger, and Robert~M. Strain.
\newblock A non-local inequality and global existence.
\newblock {\em Adv. Math.}, 230(2):642--648, 2012.

\bibitem{gualdani2019Ap}
Maria Gualdani and Nestor Guillen.
\newblock On {$A_p$} weights and the {L}andau equation.
\newblock {\em Calc. Var. Partial Differential Equations}, 58(1):Paper No. 17,
  55, 2019.

\bibitem{gualdani2022hardy}
Maria Gualdani and Nestor Guillen.
\newblock Hardy's inequality and the isotropic {L}andau equation.
\newblock {\em J. Funct. Anal.}, 283(6):Paper No. 109559, 25, 2022.

\bibitem{gualdani2016}
Maria~Pia Gualdani and Nestor Guillen.
\newblock Estimates for radial solutions of the homogeneous {L}andau equation
  with {C}oulomb potential.
\newblock {\em Anal. PDE}, 9(8):1772--1809, 2016.

\bibitem{guo2002}
Yan Guo.
\newblock The {L}andau equation in a periodic box.
\newblock {\em Comm. Math. Phys.}, 231(3):391--434, 2002.

\bibitem{henderson2020}
Christopher Henderson, Stanley Snelson, and Andrei Tarfulea.
\newblock Local solutions of the {L}andau equation with rough, slowly decaying
  initial data.
\newblock {\em Ann. Inst. H. Poincar\'{e} C Anal. Non Lin\'{e}aire},
  37(6):1345--1377, 2020.

\bibitem{hwang2020}
Hyung~Ju Hwang and Jin~Woo Jang.
\newblock Compactness properties and local existence of weak solutions to the
  {L}andau equation.
\newblock {\em Proc. Amer. Math. Soc.}, 148(12):5141--5157, 2020.

\bibitem{imbert2024monotonicity}
Cyril Imbert, Luis Silvestre, and C{\'e}dric Villani.
\newblock On the monotonicity of the {F}isher information for the {B}oltzmann
  equation.
\newblock {\em arXiv preprint arXiv:2409.01183}, 2024.

\bibitem{ji2023entropy}
Sehyun Ji.
\newblock Entropy dissipation estimates for the {L}andau equation with coulomb
  potentials.
\newblock {\em arXiv preprint arXiv:2305.09841}, 2023.

\bibitem{sehyun2024}
Sehyun Ji.
\newblock Bounds for the optimal constant of the bakry-\'emery {$\Gamma_2$}
  criterion inequality on {$\mathbb RP^{d-1}$}.
\newblock {\em arXiv preprint 2408.13954}, 2024.

\bibitem{sehyun2024dissipation}
Sehyun Ji.
\newblock Dissipation estimates of the fisher information for the landau
  equation.
\newblock {\em arXiv preprint arXiv:2410.09035}, 2024.

\bibitem{knerr1980}
Barry~F. Knerr.
\newblock Parabolic interior {S}chauder estimates by the maximum principle.
\newblock {\em Arch. Rational Mech. Anal.}, 75(1):51--58, 1980/81.

\bibitem{krieger2012}
Joachim Krieger and Robert~M. Strain.
\newblock Global solutions to a non-local diffusion equation with quadratic
  non-linearity.
\newblock {\em Comm. Partial Differential Equations}, 37(4):647--689, 2012.

\bibitem{landau1936}
Lev~Davidovich Landau.
\newblock Kinetic equation for the case of {C}oulomb interaction.
\newblock Technical Report~10, 1936.

\bibitem{matthes2012}
Daniel Matthes and Giuseppe Toscani.
\newblock Variation on a theme by {B}obyl\"{e}v and {V}illani.
\newblock {\em C. R. Math. Acad. Sci. Paris}, 350(1-2):107--110, 2012.

\bibitem{mckean1966}
H.~P. McKean, Jr.
\newblock Speed of approach to equilibrium for {K}ac's caricature of a
  {M}axwellian gas.
\newblock {\em Arch. Rational Mech. Anal.}, 21:343--367, 1966.

\bibitem{meng2023}
Fei Meng, Hao Wang, Lihua Min, and Zhengmeng Jin.
\newblock Uniform estimates for the {F}isher information of the {L}andau
  equation for soft potentials.
\newblock {\em J. Math. Anal. Appl.}, 523(1):Paper No. 126992, 21, 2023.

\bibitem{silvestre2017}
Luis Silvestre.
\newblock Upper bounds for parabolic equations and the {L}andau equation.
\newblock {\em J. Differential Equations}, 262(3):3034--3055, 2017.

\bibitem{silvestre2023regularity}
Luis Silvestre.
\newblock Regularity estimates and open problems in kinetic equations.
\newblock In {\em {$A^3N^2M$}: Approximation, Applications, and Analysis of
  Nonlocal, Nonlinear Models: Proceedings of the 50th John H. Barrett Memorial
  Lectures}, pages 101--148. Springer, 2023.

\bibitem{SnelsonIsotropic}
Stanley Snelson.
\newblock Global existence for an isotropic modification of the boltzmann
  equation.
\newblock {\em Journal of Functional Analysis}, 286(12):110423, 2024.

\bibitem{tanaka1978}
Hiroshi Tanaka.
\newblock Probabilistic treatment of the {B}oltzmann equation of {M}axwellian
  molecules.
\newblock {\em Z. Wahrsch. Verw. Gebiete}, 46(1):67--105, 1978/79.

\bibitem{toscani1992}
G.~Toscani.
\newblock New a priori estimates for the spatially homogeneous {B}oltzmann
  equation.
\newblock {\em Contin. Mech. Thermodyn.}, 4(2):81--93, 1992.

\bibitem{toscani2000}
G.~Toscani and C.~Villani.
\newblock On the trend to equilibrium for some dissipative systems with slowly
  increasing a priori bounds.
\newblock {\em J. Stat. Phys.}, 98(5-6):1279--1309, 2000.

\bibitem{villani1998fisherboltzmann}
C.~Villani.
\newblock Fisher information estimates for {B}oltzmann's collision operator.
\newblock {\em J. Math. Pures Appl. (9)}, 77(8):821--837, 1998.

\bibitem{villani1998spatiallyhomogeneous}
C.~Villani.
\newblock On the spatially homogeneous {L}andau equation for {M}axwellian
  molecules.
\newblock {\em Math. Models Methods Appl. Sci.}, 8(6):957--983, 1998.

\bibitem{villani2000fisherlandau}
C.~Villani.
\newblock Decrease of the {F}isher information for solutions of the spatially
  homogeneous {L}andau equation with {M}axwellian molecules.
\newblock {\em Math. Models Methods Appl. Sci.}, 10(2):153--161, 2000.

\bibitem{villani1998Hsolutions}
C\'{e}dric Villani.
\newblock On a new class of weak solutions to the spatially homogeneous
  {B}oltzmann and {L}andau equations.
\newblock {\em Arch. Rational Mech. Anal.}, 143(3):273--307, 1998.

\bibitem{villani2002book}
C\'{e}dric Villani.
\newblock A review of mathematical topics in collisional kinetic theory.
\newblock In {\em Handbook of mathematical fluid dynamics, {V}ol. {I}}, pages
  71--305. North-Holland, Amsterdam, 2002.

\bibitem{villani2025fisher}
C{\'e}dric Villani.
\newblock Fisher information in kinetic theory.
\newblock {\em arXiv preprint arXiv:2501.00925}, 2025.

\bibitem{wu2014}
Kung-Chien Wu.
\newblock Global in time estimates for the spatially homogeneous {L}andau
  equation with soft potentials.
\newblock {\em J. Funct. Anal.}, 266(5):3134--3155, 2014.

\end{thebibliography}

\end{document}